\newif\ifHAL
\HALtrue

\ifHAL
\documentclass[10pt,a4paper]{article}
\usepackage{a4wide,geometry}
\else
\documentclass[10pt]{siamltex}
\fi
\usepackage{amsmath}
\usepackage[utf8]{inputenc}
\usepackage[T1]{fontenc}
\usepackage[english]{babel}
\usepackage{color,graphicx}
\usepackage{amssymb}
\usepackage{amsbsy}
\usepackage{amssymb}      
\usepackage{amsmath}
\usepackage{array}
\usepackage{caption}
\usepackage{subcaption}
\usepackage{enumitem}
\usepackage{accents}

\ifHAL
\usepackage{enumitem}
\usepackage{amsthm}
\newtheorem{theorem}{Theorem}[section]
\newtheorem{assumption}[theorem]{Assumption}

\newtheorem{lemma}[theorem]{Lemma}
\newtheorem{corollary}[theorem]{Corollary}
\newtheorem{remark}[theorem]{Remark}
\newtheorem{definition}[theorem]{Definition}

\else
\newtheorem{remark}[theorem]{Remark}
\newtheorem{assumption}[theorem]{Assumption}
\usepackage{fancyhdr}
\usepackage{caption}
\usepackage{subcaption}

\fi

\usepackage{xcolor}
\usepackage{scalerel}

\input mydef.sty

\begin{document}

\ifHAL

\title{${\lowercase{hp}}$-a posteriori error estimates for hybrid high-order methods applied to biharmonic problems}

\author{Zhaonan Dong\footnotemark[1], \quad  Alexandre Ern\footnotemark[2], \quad Tanvi Wadhawan\footnotemark[1]}

\footnotetext[1]{Inria, 48 rue Barrault, 75647 Paris, France and CERMICS, CNRS, ENPC, Institut Polytechnique de Paris, 6 \& 8 avenue B.~Pascal, 77455 Marne-la-Vall\'{e}e, France}

\footnotetext[2]{CERMICS, CNRS, ENPC, Institut Polytechnique de Paris, 6 \& 8 avenue B.~Pascal, 77455 Marne-la-Vall\'{e}e, France and Inria, 48 rue Barrault, 75647 Paris, France}

\date{\today}

\else

\title{$\boldsymbol{\lowercase{hp}}$-a posteriori error estimates for hybrid high-order methods applied to biharmonic problems}

\author{
Zhaonan Dong\thanks{Inria, 48 rue Barrault, 75647 Paris, France and CERMICS, CNRS, ENPC, Institut Polytechnique de Paris, 6 \& 8 avenue B.~Pascal, 77455 Marne-la-Vall\'{e}e, France {\tt{zhaonan.dong@inria.fr}}.}
\and
Alexandre Ern\thanks{CERMICS, CNRS, ENPC, Institut Polytechnique de Paris, 6 \& 8 avenue B.~Pascal, 77455 Marne-la-Vall\'{e}e, France and Inria, 48 rue Barrault, 75647 Paris, France
{\tt{alexandre.ern@enpc.fr}}.}
\and
Tanvi Wadhawan\thanks{Inria, 48 rue Barrault, 75647 Paris, France and CERMICS, ENPC, Institut Polytechnique de Paris, CNRS, 6 \& 8 avenue B.~Pascal, 77455 Marne-la-Vall\'{e}e, France}.
{\tt{tanvi.tanvi@inria.fr}}}

\date{\today}

\fi

\maketitle

\begin{abstract}
We derive a residual-based $hp$–a posteriori error estimator for hybrid high-order (HHO) methods on simplicial meshes applied to the biharmonic problem posed on two- and three-dimensional polytopal Lipschitz domains. The a posteriori error estimator hinges on an error decomposition into conforming and nonconforming components. To bound the nonconforming error, we use a $C^1$-partition of unity constructed via Alfeld splittings, combined with local Helmholtz decompositions on vertex stars where the key contribution is to show that the stability constant only depends on the mesh shape-regularity.  For the conforming error, we design two residual-based estimators, each associated with a specific interpolation operator. In the first setting, the upper bound on the conforming error involves only the stabilization term and the data oscillation, but hinges on an assumption that we verify numerically.
In the second setting, the bound additionally incorporates bulk residuals, normal flux jumps, and tangential jumps. Numerical experiments confirm the theoretical findings on the error upper bound and also illustrate numerically that the proposed estimators lead to moderate effectivity indices.
\end{abstract}

\section{Introduction}\label{Intro}

Fourth-order problems are encountered in many engineering applications, including optimal control problems, micro-electro mechanical systems,  thin plate elasticity, and hyperviscous effects in fluid models.  A prime example of a fourth-order problem is the biharmonic problem with so-called clamped boundary conditions:
\begin{subequations} \label{def: Model problem}
\begin{align}
\Delta^2 u = f ~~&\text{in}~\Omega, \\
u=0~ ~~&\text{on} ~\partial\Omega, \\
\b{n} {\cdot} \nabla u=0~ ~~&\text{on} ~\partial\Omega,
\end{align}
\end{subequations}
where $\Omega \subset \mathbb{R}^d,~d\in\{2,3\}$, is a bounded polytopal connected domain with Lipschitz  boundary $\partial \Omega$, $\b{n} $ denotes the unit normal vector on  $\partial \Omega$, and $f\in L^2(\Omega)$.

The hybrid high-order method (HHO) is a  discretization technique that is applicable to a wide range of partial differential equations. It was initially developed for linear diffusion \cite{DiPEL:14} and elasticity  \cite{DiPEr:15}. HHO methods are formulated using broken polynomial spaces on the mesh cells and the mesh faces. The two essential components in designing HHO methods are a local reconstruction operator and a local stabilization operator, both defined locally on each mesh cell. HHO methods offer several appealing features, including support for polytopal meshes, optimal error estimates, local conservation properties, and enhanced computational efficiency due to compact stencils and the local elimination of cell unknowns via static condensation. HHO methods exhibit close connections to hybridizable discontinuous Galerkin (HDG) and weak Galerkin (WG) methods \cite{CoDPE:16}. These connections have been exploited to establish unified convergence analyses for the biharmonic problem and the acoustic wave equation \cite{DongErn2021biharmonic,ErnSteins2024}. Links between HHO and the nonconforming virtual element method (ncVEM) are discussed in \cite{CoDPE:16,Bridging_HHO,cicuttin2021hybrid}.

A posteriori error estimates and adaptive strategies for standard Galerkin methods applied to fourth-order problems have been the subject of extensive research over the past two decades. Early contributions include conforming finite element approximations~\cite{neittaanmaki2001posteriori} and Morley plate elements~\cite{da2007posteriori}. $C^0$-interior penalty discontinuous Galerkin (IPDG) methods were analyzed in~\cite{BrennerGudiSung2010} in the quadratic case and in~\cite{GeorgoulisHoustonVirtanen2011} for general order in two space dimensions. Other notable contributions include continuous and discontinuous Galerkin methods for the Kirchhoff--Love plate~\cite{HansboLarson2011} and the Ciarlet--Raviart formulation of the biharmonic problem~\cite{CharbonneauDossouPierre1997}. A posteriori error estimates for several lowest-order nonconforming finite element methods applied to biharmonic problems in two and three dimensions are established in~\cite{CarGraNat2024}. HHO methods were addressed in \cite{LiangTran2025}, where a posteriori error estimates were derived for a special HHO method employing additional one-dimensional edge unknowns for biharmonic problems in three space dimensions. Therein, the tool to control the nonconforming error is an averaging operator which maps discontinuous or continuous piecewise polynomial functions to a $C^1$-piecewise polynomial space. This allows one to bound the nonconforming error by the jumps of the underlying functions in an $h$-optimal way, but $p$-optimality is lost.

The first $hp$–a posteriori error estimate for Galerkin methods applied to biharmonic PDEs was established in~\cite{DongMascottoSutton2021}. In that work, an $hp$–a posteriori error estimate is derived using a novel approach in which the nonconforming error is controlled via a global (non-polynomial) $H^2$-potential reconstruction function, rather than a nodal averaging operator, on two- and three-dimensional, simply connected, Lipschitz domains. The resulting a posteriori error bound is $h$-optimal and $p$-suboptimal by $\frac32$-order, which matches the known $p$-suboptimality for the IPDG method. We observe that invoking a global Helmholtz decomposition leads to a constant in the error bound that deteriorates with the number of holes in the domain $\Omega$, as observed in another context in ~\cite{bertrand2023stabilization}. Another recent work \cite{chaumontfrelet:hal-05176686} studied the biharmonic problem on a simply connected planar domain and analyzed a symmetric IPDG method. By exploiting the “div–div” complex, the authors defined a polynomial lifting operator mapping the nonconforming error to a high-order ${\rm \b{H}(divdiv)}$-conforming finite element space, resulting in an error estimator that does not contain the stabilization term.

In the present work, we derive residual $hp$–{a posteriori} error estimates for the HHO methods introduced in~\cite{DongErn2021biharmonic} for the biharmonic problem posed on two- and three-dimensional Lipschitz domains with general topology. To the best of our knowledge, this is the first work to employ a $C^1$-partition of unity, combined with a local Helmholtz decomposition on each vertex star to bound the nonconforming error in the $hp$-setting. A similar idea to bound the nonconforming error in the $h$-setting can be found in \cite{GallistlTian24} with a different local Helmholtz decomposition regarding local boundary conditions. The $C^1$-partition of unity can be constructed using existing $C^1$-conforming finite element or composite finite element spaces, such as the Argyris or HCT elements. In the present work, we employ the $C^1$-partition of unity introduced in~\cite{Walk14}. The role of the partition of unity is to localize the Helmholtz decomposition, thereby avoiding global stability constant that depends on the number of holes present in the domain $\Omega$.
Furthermore, although Helmholtz decompositions on simply connected domains are already available in the literature, as in \cite{da2007posteriori} and \cite{PaulyZulehner2020} for the two- and three-dimensional cases, respectively, a crucial novelty of the present work is to establish that the stability constant of such decompositions when applied on vertex patches only depends on the mesh shape-regularity.
Finally, we believe that the present approach to bound the nonconforming error is of broader interest beyond HHO methods, as it is potentially applicable to other nonconforming finite element methods, such as IPDG methods.

The second main contribution of the paper is to obtain two upper bounds on the conforming error. Each upper bound is derived using a specific interpolation operator. The first approach uses the interpolation operator associated with the canonical hybrid finite element (see, e.g., \cite[Section 7.6]{Ern_Guermond_FEs_I_2021}) whose main advantage is that its combination with the HHO reconstruction operator leads to the $H^2$-elliptic projection. The resulting upper bound on the conforming error is $p$-suboptimal by at most one order, but is actually expected to be only $\frac12$-order suboptimal under a reasonable assumption on the interpolation error which we verify numerically in two and three dimensions. The remarkable fact about the resulting upper bound is that it involves only stabilization and data oscillation terms. The alternative (and somewhat more classical) approach to bound the conforming error hinges on the Babu\v{s}ka--Suri interpolation operator. The resulting upper bound is $p$-suboptimal by $\frac12$-order without any assumption, but at the price of introducing additional error indicator terms, namely bulk residual and jump contributions. We emphasize that both approaches rely on interpolation operators constructed on simplicial meshes.

The remainder of the paper is organized as follows. In Section~\ref{sec:Analysis Tools}, we present the weak formulation of the model problem and recall the main $hp$-approximation tools employed in the analysis. In Section~\ref{sec:HHO}, we introduce the HHO method and the two interpolation operators mentioned above. In Section~\ref{sec: Local Helmholtz Decomposition}, we derive a novel stability estimate on local Helmholtz decompositions on vertex stars. The main result is Lemma~\ref{lemma:Helmholtz-dec}. Section~\ref{sec:apost} is devoted to the residual-based $hp$-{a posteriori} error upper bound. The main result is Theorem~\ref{thm:upperbound}. Numerical experiments illustrating the theoretical findings are presented in Section~\ref{sec:Numerical example}.  Finally, Section~\ref{sec:proofs} gathers some technical proofs supporting the $hp$-{a posteriori} error analysis. We mention that we do not state localized lower error bounds. Indeed, such bounds follow from straightforward adaptations of the arguments invoked in~\cite{DongMascottoSutton2021} for dG methods. Moreover, these bounds exhibit a pessimistic $p$-suboptimality (compared to numerical observations) as their proof involves $C^1$-bubble functions and $H^2$-extension operators which cannot be constructed using the $hp$-techniques from~\cite{MeWo01}.

\section{Analysis tools}\label{sec:Analysis Tools}
In this section, we introduce essential notation at the continuous and discrete levels, formulate the weak problem, and recall some useful results from the literature.

\subsection{Basic notation}
We adopt standard notation for Lebesgue and Sobolev spaces. Let $S \subset \mathbb{R}^d$, $d \in \{2,3\}$, be an open, bounded, Lipschitz set. For scalar-, vector-, or tensor-valued fields, we denote the $L^2$-inner product as $(\bullet,\bullet)_S$. We employ boldface font to denote vector-valued fields, and we use an additional undertilde notation, e.g., $\ut{\b{A}}$, for tensor-valued fields. The (weak) gradient of a scalar-valued function $v$ is denoted as $\nabla v$, and its (weak) Hessian as $\hes v$. Let $\b{n}_S$ denote the unit outward normal vector on the boundary $\partial S$ of~$S$. For any smooth functions $v$ and $w$, we have the following integration by parts formula:
\begin{align}\label{intergration by parts}
(\lapp v,  w)_{S}  = (\hes v , \hes w)_{S} +(\b{n}_S {\cdot}\grad \Delta v, w)_{\partial{S}}   - (\hes v\b{n}_S, \grad{w})_{\partial{S}} .
\end{align}
Let $\utP_{\partial S}:=\utI-\b{n}_S {\otimes} \b{n}_S\in \RR^{d\times d}$, where $\utI$ denotes the identity matrix in $\RR^{d\times d}$. Whenever the context is unambiguous (e.g., when a term is evaluated within the inner product or the norm in $L^2(\partial S)$), we denote $ \partial_n v : = \b{n}_S {\cdot} \nabla v$ the (scalar-valued) outward normal derivative of $v$ on $\partial S$, and $\partial_t v : = \utP_{\partial S} \nabla v$ its ($\RR^{d}$-valued) tangential derivative. Similarly, $\partial_{nn} v: =  \b{n}_S{\cdot}\hes v \b{n}_S$, $\partial_{nt}v : = \utP_{\partial S} \hes v \b{n}_S$ and $\partial_{tt} v:=  \utP_{\partial S}  \hes v \utP_{\partial S} $ correspond to the (scalar-valued) normal-normal, ($\RR^{d}$-valued) normal-tangential and ($\RR^{d\times d}$-valued) tangential-tangential components of the Hessian of $v$,  respectively.  With these conventions, the integration by parts formula~\eqref{intergration by parts} can be rewritten as
\begin{align}\label{eq:IPP_bis}
(\lapp v,  w)_{S} = (\hes v , \hes w)_{S}+ (\partial_n\lap v, w)_{\partial{S}} - (\partial_{nn}v, \partial_nw)_{\partial{S}} - (\partial_{nt}v, \partial_tw)_{\partial{S}} .
\end{align}

\subsection{Weak formulation}
The weak formulation of the model problem~\eqref{def: Model problem} reads as follows: Find $u \in H^2_0(\Omega)$ such that
\begin{align}\label{weakform}
(\hes u , \hes v)_{\Omega} = (f, v)_{\Omega}\qquad\forall v \in  H^2_0(\Omega).
\end{align}
The well-posedness of \eqref{weakform} is proven, e.g., in \cite[Section 1.5]{GirRa:86}.

\subsection{Mesh}
Let $\mesh$ denote a conforming simplicial mesh of the domain $\Omega$.
A generic mesh cell is denoted by $T \in \mesh$, its diameter by $h_T$, and its unit outward normal by $\n_T$. All the mesh cells are generated from a reference simplex by an affine geometric mapping. The mesh faces are collected in the set $\Fall$, which is decomposed as $\Fall = \Fint \cup \Fb$, where $\Fint$ is the collection of mesh interfaces (shared by two distinct mesh cells) and  $\Fb$ the collection of mesh boundary faces. We denote by $h_F$ the diameter of a generic mesh face $F\in\Fall$. Every $F\in \Fint$ is oriented by a unit normal vector $\n_F$ with arbitrary, but fixed, direction, whereas every $F\in\Fb$ is oriented by $\n|_F$. The boundary $\dK$ of every mesh cell $T\in\mesh$ is split as ${\dK}=\dKi\cup \dKb$ with obvious notation, and the mesh faces composing $\dK$ are collected in the set $\FK$, which is split as $\FK=\FK^{\rm i} \cup  \FK^{\rm b}$.
The mesh vertices are collected in the set $\vertice$, which is split as $\vertice = \verticei \cup \verticeb$. For all $\ba \in \vertice$, we denote by $\mesha$
the collection of mesh cells that share the vertex $\ba$,
and by $\oma$ the corresponding open subdomain, also referred to as vertex patch.

Let $\ell \ge 0$ be an integer number. For all $T\in\mesh$, we denote by $\mathbb{P}^{\ell}(T)$ the space of $d$-variate polynomials of degree at most $\ell$ restricted to $T$, by $\Pi^\ell_T$ the $L^2$-orthogonal projection onto $\mathbb{P}^{\ell}(T)$ and by $\bbP^{\ell}(\mesh) := \{ v_h \in L^2(\Omega) \;|\; v_h|_T \in \bbP^{\ell}(T) \}$ the broken polynomial space of order $\ell$ on the mesh. We adopt a similar notation for all $F\in\Fall$, leading to the polynomial space $\mathbb{P}^\ell(F)$, the $L^2$-orthogonal projection $\Pi^\ell_F$, and the broken polynomial space $\bbP^{\ell}(\Fall)$. At some occasions, we also consider $\ell\le -1$, in which case $\mathbb{P}^{\ell}(T):=\{0\}$ and $\Pi^{\ell}_{T}$ identically maps to the zero function.

Let $s\ge0$ be a real number. We define the broken Sobolev space $H^s(\mesh)
:= \bigl\{\, w \in L^2(\Omega) \;|\; w|_T \in H^s(T) \; \forall\, T \in \mesh \,\bigr\}$. For all $w \in H^s(\mesh)$, $s>\frac12$, its jump across any mesh interface
$F = \dK_1 \cap \dK_2 \in \Fint$ is defined as
$\sjump{w} := w|_{T_1}|_F - w|_{T_2}|_F$,
where $\n_F$ is oriented from $T_1$ to $T_2$. On every boundary face
$F = \dK \cap \partial \Omega \in \Fb$, we set $\sjump{w} := w_T|_F$.
Finally, the broken gradient $\nabla_{\mesh}$ and broken Hessian $\hes_{\mesh}$ are
defined as the gradient and
Hessian operators acting cellwise on $H^1(\mesh)$ and $H^2(\mesh)$, respectively.

\subsection{Analysis tools}
In this section, we recall several inequalities that are used in our $hp$-error analysis.
We use the notation $A \lesssim B$ to mean that $A \le C B$ for positive numbers $A$ and $B$, where $C$ denotes a generic (nondimensional) positive constant, whose value may vary at each occurrence provided it is independent of the mesh size $h$, the underlying polynomial degree, and the topology of the domain $\Omega$. The value of $C$ can depend on the shape-regularity parameter of the mesh and the space dimension.

\begin{lemma}[$hp$-discrete trace inequality]\label{lemma: Discrete trace  inequality}
For all $T \in \mesh$, all $v \in \bbP^p(T)$ with $p \ge 0$, and all $F\in\FK$, the following holds:
\begin{subequations}
\begin{align}\label{eq: discrete trace sharp_estimate}
\|v - \Pi^{n}_{T}(v)\|_{F}
\lesssim  \bigg\{\frac{(p-n)(p+1 + n + d)}{h_T}\bigg\}^{\frac12}
\|v - \Pi^{n}_{T}(v)\|_{T} \qquad \forall n\in\{-1,\ldots,p\}.
\end{align}
In particular, we have
\begin{align}\label{discrete}
\|v\|_{\dK} \lesssim \bigg\{\dfrac{(p+1)^2}{h_T}\bigg\}^{\frac{1}{2}}\|v\|_{T}.
\end{align}
\end{subequations}
\end{lemma}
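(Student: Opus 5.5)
Since every $T\in\mesh$ is the affine image of a reference simplex, I will first prove the estimate on the reference simplex $\widehat T$ and a reference face $\widehat F$, and then transfer it back. Under the affine map, $|F|/|\widehat F|$ and $|T|/|\widehat T|$ are comparable to $h_F^{d-1}$ and $h_T^{d}$. Shape-regularity gives $h_F\simeq h_T$. Moreover, $\Pi^n_T$ commutes with the pullback, because affine maps preserve $\bbP^n$ and the $L^2$ inner products differ only by the constant Jacobian factor. Hence $v-\Pi^n_T v$ pulls back to $\hat v-\Pi^n_{\widehat T}\hat v$, and the scaling produces the factor $h_T^{-1/2}$ with constants depending only on shape-regularity. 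The problem therefore reduces to proving, on $\widehat T$,
\begin{equation*}
\|w\|_{\widehat F}^2\lesssim (p-n)(p+1+n+d)\,\|w\|_{\widehat T}^2
\qquad\forall w\in\bbP^p(\widehat T),\ w\perp\bbP^n(\widehat T).
\end{equation*}

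On the reference simplex I will use the orthogonal decomposition $\bbP^p(\widehat T)=\bigoplus_{k=0}^p \mathcal{V}_k$. Here $\mathcal V_k$ is the space of orthogonal polynomials of exact degree $k$, namely the Jacobi/Dubiner-type polynomials that are orthogonal to $\bbP^{k-1}$. Since $w\perp\bbP^n$, we have $w=\sum_{k=n+1}^p w_k$ with $w_k\in\mathcal V_k$. The key one-dimensional ingredient is the sharp trace identity of Warburton–Hesthaven type. By symmetry of the simplex I may take $\widehat F=\{\lambda_0=0\}$. For each $k$ I will establish
\begin{equation*}
\|w_k\|_{\widehat F}^2\le c_k\|w_k\|_{\widehat T}^2,\qquad c_k\simeq (k+1)(k+d)/d .
\end{equation*}
To do this, I will expand along the coordinate transverse to $\widehat F$ in the collapsed-coordinate basis. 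The restriction to $\widehat F$ then only sees the endpoint values of Jacobi polynomials $P_j^{(\alpha,0)}$, and these endpoint values are explicit. This gives the bound on each orthogonal layer. Because cross terms on the face need not vanish, I will then combine the layers with Cauchy–Schwarz:
\begin{equation*}
\|w\|_{\widehat F}^2\le\Big(\sum_{k=n+1}^p\|w_k\|_{\widehat F}\Big)^2 .
\end{equation*}
A cruder Cauchy–Schwarz, with $\sum_{k=n+1}^p c_k$ as the weight, yields $\|w\|_{\widehat F}^2\le\big(\sum_{k=n+1}^p c_k\big)\|w\|_{\widehat T}^2$. Computing $\sum_{k=n+1}^{p}(k+1)(k+d)\lesssim (p-n)(p+1)(p+n+d)$ is one power of $p$ too many. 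So instead I will use the refined argument of the known result: write $w|_{\widehat F}$ via the reproducing kernel of the orthogonal complement $\bbP^p\ominus\bbP^n$ evaluated on the face. The best constant equals $\sup_{x\in \widehat F}\ldots$, more precisely the norm of the trace operator restricted to $\bigoplus_{k>n}\mathcal V_k$. Summing the face norms of an orthonormal basis then gives a telescoping sum of Jacobi endpoint values equal to $\frac{(p-n)(p+n+d+1)}{d}\cdot\frac{|\widehat F|}{|\widehat T|}$.

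Taking $n=-1$ gives $(p+1)(p+d)\lesssim(p+1)^2$, which is \eqref{discrete} after summing over the $d+1$ faces.

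The main obstacle is this sharp constant computation on the simplex, that is, getting the product $(p-n)(p+n+d+1)$ rather than the crude cubic bound. I would first try to cite the explicit result of Warburton–Hesthaven for $n=-1$, together with its extension to orthogonal complements (Ern and coauthors have such lemmas). Failing that, I would carry out the Jacobi computation in collapsed coordinates, using the Christoffel–Darboux-type telescoping of $\sum_k (2k+\alpha+1)$.
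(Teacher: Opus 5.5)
\textbf{Verdict: the self-contained argument has a genuine gap.} The rest of your proposal is sound. The affine reduction to the reference simplex works: $|F|/|T|=d/h_F^\perp$, with $h_F^\perp$ the height onto $F$, and $\Pi^n_T$ commutes with affine pullbacks. The deduction of \eqref{discrete} from $n=-1$ is also correct. Your fallback, citing Warburton--Hesthaven for $n=-1$ plus an orthogonal-complement extension, is exactly the paper's proof, which consists only of the references \cite{warburton2003constants} and \cite{DongWadhawan2026}.

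\textbf{Where the argument fails.} The step ``summing the face norms of an orthonormal basis gives $\frac{(p-n)(p+n+d+1)}{d}\frac{|\widehat F|}{|\widehat T|}$'' is false for $d\ge2$. Let $V:=\mathbb{P}^p(\widehat T)\ominus\mathbb{P}^n(\widehat T)$ with reproducing kernel $K_V$. Then $\sum_\phi\|\phi\|_{\widehat F}^2=\int_{\widehat F}K_V(x,x)\,\mathrm{d}s$ is the trace of the face Gram operator on $V$. That is the sum of all its eigenvalues, not the largest one. Integrating the pointwise bound $|w(x)|^2\le K_V(x,x)\|w\|_{\widehat T}^2$ over $\widehat F$ gives the same overcount. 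A concrete check: take $d=2$, $n=-1$, $p=1$, the triangle with vertices $(-1,-1)$, $(1,-1)$, $(-1,1)$, and $\widehat F$ its horizontal edge, so $|\widehat F|=|\widehat T|=2$. The three orthonormal Dubiner functions have face norms squared $1,2,2$, summing to $5$, while the sharp constant is $3$. In general the sum grows like $p^{d+1}$, so this route is worse than the layer-wise one you discarded.

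\textbf{Missing idea and fix.} What you need is the tangential decoupling in collapsed coordinates. Take $\widehat F$ to be the face on which the collapse factor equals one. The trace of a Dubiner function $\psi$ of total degree $k$ is then an orthonormal Dubiner function of $\widehat F$ times the normalized Jacobi endpoint value. Two consequences follow:
\begin{itemize}
\item traces with different tangential indices are $L^2(\widehat F)$-orthogonal;
\item $\|\psi\|_{\widehat F}^2=\frac{2k+d}{d}\frac{|\widehat F|}{|\widehat T|}$ for every such $\psi$.
\end{itemize}
Distinct basis functions of the layer $\mathcal V_k$ have distinct tangential indices, so you get the exact identity $\|w_k\|_{\widehat F}^2=\frac{2k+d}{d}\frac{|\widehat F|}{|\widehat T|}\|w_k\|_{\widehat T}^2$.

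Your per-layer constant $c_k\simeq(k+1)(k+d)/d$ is the trace constant of all of $\mathbb{P}^k$, not of $\mathcal V_k$; that is where the extra power of $p$ came from. With the correct layer constant, your Cauchy--Schwarz across layers already gives the sharp bound, since $\sum_{k=n+1}^p(2k+d)=(p-n)(p+n+1+d)$. Equivalently, apply Cauchy--Schwarz over the transverse index for each fixed tangential mode, then take the maximum over tangential modes. This is the Warburton--Hesthaven mechanism, and it yields the operator norm as a maximum rather than a sum.
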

\begin{proof}
The proof of~\eqref{eq: discrete trace sharp_estimate} can be found in \cite{DongWadhawan2026}, whereas the proof for the case $n=-1$ can be found in \cite{warburton2003constants}. \eqref{discrete} readily follows from~\eqref{eq: discrete trace sharp_estimate} by taking $n=-1$, summing over $F\in\FK$, and using that $p+d\lesssim p+1$.
\end{proof}

\begin{lemma}[$hp$-inverse inequality]\label{lemma: Discrete Inverse  inequality}
For all $T\in\mesh$ and all $v \in \mathbb{P}^p(T)$ with $p \geq 0$, the following holds:
\begin{align}\label{eq:discrete inverse}
\|\nabla v\|_{T} \lesssim \frac{p^2}{h_T} \|v\|_{T}.
\end{align}
\end{lemma}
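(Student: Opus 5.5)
The plan is to reduce the estimate to a fixed reference simplex by an affine map and then invoke the classical one-dimensional Markov inequality. Let $\widehat T$ be the reference simplex and $F_T:\widehat T\to T$, $F_T(\hat x)=B_T\hat x+b_T$, the affine geometric mapping. For $v\in\mathbb{P}^p(T)$, set $\hat v:=v\circ F_T\in\mathbb{P}^p(\widehat T)$. By the chain rule, $\nabla v\circ F_T=B_T^{-\mathrm{T}}\widehat\nabla\hat v$. Changing variables in both norms, the Jacobian factors $|\det B_T|^{1/2}$ cancel, and we obtain
\[
\|\nabla v\|_T\le \|B_T^{-1}\|\,|\det B_T|^{1/2}\|\widehat\nabla\hat v\|_{\widehat T},\qquad \|\hat v\|_{\widehat T}=|\det B_T|^{-1/2}\|v\|_T .
\]
Shape-regularity gives $\|B_T^{-1}\|\lesssim h_T^{-1}$. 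It therefore suffices to prove the reference estimate $\|\widehat\nabla\hat v\|_{\widehat T}\lesssim p^2\|\hat v\|_{\widehat T}$ with a constant depending only on $d$.

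For the reference estimate, I will first replace $\widehat T$ by a cube. Choose an affine image $\widehat T\to Q$ onto a shape of simplex contained in the cube $Q=(0,1)^d$; more precisely, use that the simplex is the image of the cube under the Duffy (collapsed-coordinate) map, or, more simply, cover $\widehat T$ by finitely many (depending on $d$) cubes and simplex pieces. I expect this geometric step to be the main obstacle, because the simplex is not a tensor domain.

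The cleanest route I would try first avoids the Duffy map's degeneracy. For each partial derivative direction $\xi$, I restrict $\hat v$ to line segments in $\widehat T$ parallel to $\xi$. On each such segment $I$ of length $\ell$, the restriction is a univariate polynomial of degree $\le p$. The $L^2$ Markov inequality on an interval then gives
\[
\|\partial_\xi\hat v\|_{L^2(I)}\lesssim \frac{p^2}{\ell}\|\hat v\|_{L^2(I)}.
\]
The difficulty is that $\ell$ tends to zero near the faces. To handle this, I will use a weighted version, $\|\sqrt{w}\,q'\|_{L^2(-1,1)}\lesssim p\|q\|_{L^2(-1,1)}$ with $w=1-t^2$, to control the near-boundary regions. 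Alternatively, I will take only the segments of length $\ge c>0$ through a sub-simplex $\widehat T_\delta$ and handle the rest by a finite covering argument.

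Concretely, I will cover $\widehat T$ by $d+1$ regions, each being the set of points whose segments in a fixed direction have length bounded below. This works because, for a point $x\in\widehat T$, some coordinate direction (among the $d$ axis directions and the direction $(1,\dots,1)$) yields a chord of length $\ge c_d>0$ through $x$ ending on opposite faces. Integrating the 1D Markov estimate over the parallel chords via Fubini controls the directional derivative on that region by $p^2\|\hat v\|_{\widehat T}$. Since these $d+1$ directions span $\mathbb{R}^d$, the full gradient on that region is then recovered by combining the directional derivatives with norm equivalence on $\mathbb{R}^d$, and summing over the regions yields the reference estimate. Composing with the scaling step completes the proof of \eqref{eq:discrete inverse}, and the constant depends only on $d$ and the shape-regularity parameter.

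Alternatively, one may simply cite the classical result (Schwab, $p$- and $hp$-FEM, Thm. 4.76), which is the route I would take in the paper itself.
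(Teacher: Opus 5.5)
Citing Schwab, Thm.~4.76, which is your fallback, is exactly the paper's proof, and your affine scaling step is correct. The self-contained reference-element argument, however, does not close as written.

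Fubini plus the one-dimensional $L^2$ Markov inequality controls only $\partial_\xi \hat v$ on the set $S_\xi$ of points whose chord parallel to $\xi$ has length at least $c$. To bound $|\nabla\hat v(x)|$ you need $d$ linearly independent directional derivatives controlled \emph{at the same point} $x$. In other words, $x$ must lie in $d$ of the sets $S_\xi$ with independent $\xi$. That your $d+1$ directions jointly span $\mathbb{R}^d$ does not help when each region carries only one of them.

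With your choice of directions this genuinely fails. Take $\widehat T=\{x_i\ge 0,\ \sum_i x_i\le 1\}$ and $x$ near the vertex $e_1$. The chords parallel to $e_j$ ($j\ge 2$) have length $1-\sum_i x_i+x_j$. The chord parallel to $(1,\dots,1)$ has length $\sqrt d\,(\min_i x_i+(1-\sum_i x_i)/d)$. Both tend to zero, so near $e_1$ only $\partial_{x_1}\hat v$ is controlled. On the short chords the Markov factor $p^2/\ell$ blows up, and the same happens near every vertex $e_j$. The weighted Markov inequality you mention does not remove this degeneration either.

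The repair is to attach the directions to vertices rather than using one fixed set. Let $\lambda_0,\dots,\lambda_d$ be the barycentric coordinates and $a_0,\dots,a_d$ the vertices of $\widehat T$. The regions $R_m:=\{\lambda_m\ge 1/(d+1)\}$ cover $\widehat T$.

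For $x\in R_m$ and $j\ne m$, moving along $a_j-a_m$ changes only $\lambda_m\mapsto\lambda_m-t$ and $\lambda_j\mapsto\lambda_j+t$. The chord through $x$ therefore corresponds to $t\in[-\lambda_j,\lambda_m]$ and has length at least $|a_j-a_m|/(d+1)$. Hence, by your Fubini--Markov argument, all $d$ edge derivatives $\partial_{a_j-a_m}\hat v$, $j\ne m$, are bounded in $L^2(R_m)$ by a multiple of $p^2\|\hat v\|_{\widehat T}$.

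These edge vectors form a basis of $\mathbb{R}^d$, so $\|\nabla\hat v\|_{R_m}\lesssim p^2\|\hat v\|_{\widehat T}$. Summing over the $d+1$ regions gives the reference estimate with a constant depending only on $d$. Your scaling step then completes the proof.
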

\begin{proof}
A proof can be found in \cite[Theorem~4.76]{schwab}.
\end{proof}


\begin{lemma}[Global Babu\v{s}ka--Suri $hp$-interpolation operator]\label{lemma: hp-KM orginal}
Fix $\epsilon>0$.
For all $p\geq1$, there exists an interpolation operator $\BS^p: H^1_0(\Omega) \cap H^{\frac{d}{2} +\epsilon}(\Omega)  \rightarrow  \mathbb{P}^{p}(\mesh) \cap H^1_{0} (\Omega)$ such that, for all $r \in \{ 2, \ldots p\}$, all $m \in \{ 0, \ldots, r\}$, all $v\in H^1_0(\Omega) \cap H^{\frac{d}{2} +\epsilon}(\Omega) \cap H^r(\mesh)$, and all $T\in \mesh$, the following holds:
\begin{equation}\label{BS interpolation}
|v-\BS^p(v)|_{H^m(T)} \lesssim \bigg\{\dfrac{h_T}{p} \bigg\}^{r-m} \ell_\Omega^{-r} \|v\|_{H^r(T)},
\end{equation}
where $\|v\|_{H^r(T)}^2:= \sum_{n\in\{0,\ldots,r\}} \ell_\Omega^{2n} |v|_{H^n(T)}^2$ and $\ell_\Omega$ is a global length scale associated with $\Omega$ (e.g., its diameter) which is introduced for dimensional consistency.
\end{lemma}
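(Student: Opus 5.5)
We would not build a new operator but take the classical Babu\v{s}ka--Suri construction on simplicial meshes (as in Babu\v{s}ka--Suri and the refinements by Melenk, Schwab) and only track constants and scaling. First, on the reference simplex $\widehat T$ we use a polynomial projector $\widehat\pi^p:H^{\frac d2+\epsilon}(\widehat T)\to\bbP^p(\widehat T)$ that reproduces vertex values, and whose traces on each edge (and, for $d=3$, each face) depend only on the values of the function on that edge/face. Concretely, we build it by the hierarchical construction: vertex interpolation (well defined since $H^{\frac d2+\epsilon}\hookrightarrow C^0$), then edge corrections via a one-dimensional $H^1_0$-type projection on each edge, lifted by polynomial extension operators, then face corrections for $d=3$, then an interior $H^1_0$-projection. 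The key property is the reference estimate
\begin{equation*}
|\widehat v-\widehat\pi^p\widehat v|_{H^m(\widehat T)}\lesssim p^{-(r-m)}\|\widehat v\|_{H^r(\widehat T)},\qquad 0\le m\le r,\ 2\le r\le p,
\end{equation*}
with a constant depending only on $d$ (and $\epsilon$, $r$).

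Second, we define $\BS^p(v)|_T:=(\widehat\pi^p(v\circ F_T))\circ F_T^{-1}$, with $F_T$ the affine map. Because the trace of $\widehat\pi^p$ on any subsimplex depends only on the trace of $v$ there, and affine maps restricted to a common face coincide from both sides (up to a vertex permutation, for which the construction is chosen symmetric), the local interpolants glue into a $C^0$ function, i.e.\ an element of $\bbP^p(\mesh)\cap H^1(\Omega)$. Boundary conditions are preserved: if $v=0$ on a boundary face, all vertex, edge and face contributions vanish there. Hence we get $\BS^p(v)\in H^1_0(\Omega)$.

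Third, we obtain \eqref{BS interpolation} by scaling. Using $|w\circ F_T|_{H^n(\widehat T)}\simeq h_T^{n-\frac d2}|w|_{H^n(T)}$ under shape regularity, and polynomial invariance ($\widehat\pi^p$ reproduces $\bbP^{r-1}$ since $r\le p$), we apply Bramble--Hilbert on $\widehat T$ to replace $\|\widehat v\|_{H^r}$ by $|\widehat v|_{H^r}$. Scaling back gives $|v-\BS^p v|_{H^m(T)}\lesssim (h_T/p)^{r-m}|v|_{H^r(T)}\le (h_T/p)^{r-m}\ell_\Omega^{-r}\|v\|_{H^r(T)}$, directly from the definition of the weighted norm.

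The main obstacle is the reference estimate with the full $p^{-(r-m)}$ rate simultaneously for all $m\le r$ (in particular $m=0,1$) and with only $H^{\frac d2+\epsilon}$ regularity for point values. Securing the optimal exponent needs polynomial extension operators from edges/faces that are stable in the right fractional norms, together with one-dimensional $hp$ projection estimates. Here we would rely on the known results of Babu\v{s}ka--Suri ($d=2$) and Mu\~noz-Sola / Melenk ($d=3$) rather than reprove them. Since $r\ge2>\frac d2$, point evaluation is indeed controlled for the relevant $v$.
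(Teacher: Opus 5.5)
Your proposal is correct and follows essentially the same route as the paper, whose proof is only a citation of the classical Babu\v{s}ka--Suri construction: like the paper, you defer the key reference-element estimate (simultaneous $p^{-(r-m)}$ rates for a vertex-preserving projector with trace-compatible edge and face contributions) to the literature. Your affine-scaling plus Bramble--Hilbert step adds something, since it gives the sharper bound with $|v|_{H^r(T)}$ on the right-hand side, which is dominated by $\ell_\Omega^{-r}\|v\|_{H^r(T)}$ and so makes precise the paper's remark about the length scale $\ell_\Omega$.
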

\begin{proof}
A proof can be found in \cite{babuvska1987optimal} (without considering the global length scale $\ell_\Omega$).
\end{proof}

\begin{corollary}[Modified Babu\v{s}ka--Suri $hp$-interpolation operator]\label{cor: global interpolation of BS}
Fix $\epsilon>0$.
For all $p\geq2$, there exists an interpolation operator
$\mBS^p: H^1_0(\Omega) \cap H^{\frac32 +\epsilon}(\Omega) \rightarrow \mathbb{P}^{p}(\mesh) \cap H^1_{0} (\Omega)$ such that, for all $v\in H^1_0(\Omega) \cap H^{\frac32 +\epsilon}(\Omega) \cap H^{2}(\mesh)$ and all $T\in \mesh$, the following holds:
\begin{multline} \label{eq: Modified BS approximation}
\bigg\{\frac{p}{h_T}\bigg\}^{2}\|v - \mBS^p (v)\|_{T}
+ \bigg\{\frac{p}{h_T}\bigg\}^{\frac32}\|v - \mBS^p  (v)\|_{\dK}
+ \bigg\{\frac{p}{h_T}\bigg\} \|\nabla(v - \mBS^p  (v))\|_T \\
+ \bigg\{\frac{p}{h_T}\bigg\}^{\frac12}\|\partial_{n} (v - \mBS^p  (v))\|_{\dK}
+ \|\hes \mBS^p  (v)\|_{T} \lesssim \| \hes v\|_{T}.
\end{multline}
\end{corollary}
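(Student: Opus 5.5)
The plan is to deduce the four approximation bounds and the $H^2$-stability bound from the Babu\v{s}ka--Suri estimate~\eqref{BS interpolation} with the lowest admissible regularity index $r=2$ (allowed since $p\ge2$). The extra work is to replace the full norm $\|v\|_{H^2(T)}$ on the right-hand side, which contains $\ell_\Omega$-weighted lower-order seminorms, by the seminorm $\|\hes v\|_T$ alone. I will take $\mBS^p$ to be the Babu\v{s}ka--Suri operator $\BS^p$ of Lemma~\ref{lemma: hp-KM orginal}, modified if necessary so that it is \emph{cellwise affine-preserving}. This means that $\BS^p(v)|_T$ depends only on $v|_T$ (through vertex values and edge/face/cell liftings of the traces) and that $\BS^p(v)|_T=q$ whenever $v|_T=q\in\bbP^1(T)$. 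For the construction in~\cite{babuvska1987optimal} both properties hold, because the vertex interpolant reproduces affine functions and the edge, face and interior corrections then vanish. The regularity $H^{\frac32+\epsilon}$, rather than $H^{\frac d2+\epsilon}$, suffices because only point values at vertices and traces on edges are needed, and these are well defined for $H^2(T)$ functions cellwise.

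\textbf{Step 1: localisation and scaling.} Fix $T$ and let $q_T\in\bbP^1(T)$ be the averaged Taylor polynomial of $v|_T$. By the two properties above,
\[
v-\mBS^p(v) = (v-q_T)-\mBS^p(v-q_T) \quad\text{on } T .
\]
I then apply~\eqref{BS interpolation} to $w:=v-q_T$. Since the operator is local, the estimate can be performed on the reference simplex and mapped back affinely. This effectively replaces $\ell_\Omega$ by $h_T$ and gives, for $m\in\{0,1,2\}$,
\[
|w-\mBS^p w|_{H^m(T)} \lesssim \Big\{\frac{h_T}{p}\Big\}^{2-m} h_T^{-2}\bigl(\|w\|_T+h_T|w|_{H^1(T)}+h_T^2|w|_{H^2(T)}\bigr).
\]
The Bramble--Hilbert lemma then bounds the bracket by $h_T^2\|\hes v\|_T$, since $\hes w=\hes v$. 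This yields the first and third terms of~\eqref{eq: Modified BS approximation}, together with $\|\hes(v-\mBS^p v)\|_T\lesssim\|\hes v\|_T$. The triangle inequality then gives the stability term $\|\hes\mBS^p(v)\|_T\lesssim\|\hes v\|_T$.

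\textbf{Step 2: boundary terms.} For the two boundary terms I use the multiplicative trace inequality
\[
\|z\|_{\dK}^2\lesssim h_T^{-1}\|z\|_T^2+\|z\|_T\|\nabla z\|_T .
\]
With $z=v-\mBS^p v$, the bounds from Step~1 give
\[
\|z\|_{\dK}^2\lesssim \bigl(h_T^3p^{-4}+h_T^3p^{-3}\bigr)\|\hes v\|_T^2\lesssim h_T^3p^{-3}\|\hes v\|_T^2,
\]
which is the $(p/h_T)^{3/2}$ term. With $z=\nabla(v-\mBS^p v)$, the same inequality gives $\|\nabla z\|_{\dK}^2\lesssim h_T p^{-1}\|\hes v\|_T^2$, which controls $\partial_n$ and yields the $(p/h_T)^{1/2}$ term.

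\textbf{Main obstacle.} The delicate point is Step~1: Lemma~\ref{lemma: hp-KM orginal} is stated globally with a global length scale, so the localisation argument needs the cellwise locality and affine reproduction of the operator. If the original Babu\v{s}ka--Suri construction does not provide these exactly, this is where the \emph{modification} enters. I would first try to rebuild $\mBS^p$ explicitly as the vertex-linear interpolant plus the $hp$ edge, face and interior polynomial liftings of~\cite{babuvska1987optimal}, then verify these two properties directly and redo the $p$-explicit estimates on the reference element for $r=2$.
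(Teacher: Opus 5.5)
\textbf{Gap in Step~1.} The gap sits exactly where you flag it. Step~1 needs $\BS^p(v)|_T=q$ whenever $v|_T=q\in\mathbb{P}^1(T)$, and nothing you have available gives this.

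\begin{itemize}
\item Lemma~\ref{lemma: hp-KM orginal} does not imply it. Cellwise locality can be read off from the local estimate~\eqref{BS interpolation} for a linear operator. Exactness on affine functions cannot: for $v|_T$ affine, the right-hand side still contains $\ell_\Omega^{-2}\|v\|_T+\ell_\Omega^{-1}|v|_{H^1(T)}\neq0$.
\item Your justification describes a construction of the type \emph{vertex interpolant plus edge/face/interior liftings}. It does not check the property for the operator actually invoked.
\item Your fallback, rebuilding the operator and redoing the $p$-explicit analysis on the reference element, amounts to reproving the Babu\v{s}ka--Suri theory.
\end{itemize}

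Two further steps are not licensed by the lemma as stated. First, on a cell with a boundary face, the averaged Taylor polynomial $q_T$ does not vanish on that face. Hence $v-q_T$ is not the restriction of any admissible function in $H^1_0(\Omega)$, and the lemma cannot be applied to it. Second, the lemma is a global $H^1_0$-statement, so it cannot simply be transported to the reference simplex.

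\textbf{The missing idea.} The paper uses a composition, which is what gives the corollary its name:
\[
\mBS^p(v):=\mathcal{J}^2_{0h}(v)+\BS^p\bigl(v-\mathcal{J}^2_{0h}(v)\bigr).
\]
Here $\mathcal{J}^2_{0h}$ is the piecewise-quadratic $H^1_0$-conforming quasi-interpolant of Ern--Guermond. With $w:=v-\mathcal{J}^2_{0h}(v)$, it satisfies the local bounds $h_T^{-2}\|w\|_T+h_T^{-1}\|\nabla w\|_T+\|\hes \mathcal{J}^2_{0h}(v)\|_T\lesssim\|\hes v\|_T$. This buys three things at once:
\begin{itemize}
\item $v-\mBS^p(v)=w-\BS^p(w)$ holds by construction, so no reproduction property of $\BS^p$ is needed.
\item $w$ respects the boundary condition.
\item \eqref{BS interpolation} with $r=2$ can be used as a black box, since
\[
\ell_\Omega^{-2}\|w\|_{H^2(T)}\lesssim\ell_\Omega^{-2}\|w\|_T+\ell_\Omega^{-1}\|\nabla w\|_T+\|\hes w\|_T\lesssim\|\hes v\|_T,
\]
using only $h_T\le\ell_\Omega$.
\end{itemize}
Neither Bramble--Hilbert nor a scaling argument is needed. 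The $\ell_\Omega$-weights are harmless once the lower-order norms carry the right powers of $h_T$.

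With this definition, the rest of your argument goes through unchanged: Step~2 (the multiplicative trace inequality) and the triangle inequality for $\|\hes\mBS^p(v)\|_T$. These supply exactly the details the paper leaves as \emph{handled similarly}.
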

\begin{proof}
The idea is to set, for all $v\in H^1_{0}(\Omega)$,
\begin{align*}
\mBS^p(v) : = \mathcal{J}^2_{0h} (v) + \BS^p(v -  \mathcal{J}^2_{0h} (v) ),
\end{align*}
with $\mathcal{J}^2_{0h}:H^1_0(\Omega)\rightarrow \mathbb{P}^2(\mesh)\cap H^1_0(\Omega)$ denoting the (piecewise quadratic) $L^2$-stable quasi-interpolation operator with prescribed boundary conditions derived in \cite{ErnGuermond:17}. This operator satisfies, for all $v\in H^1_0(\Omega)\cap H^2(\mesh)$ and all $T\in\mesh$,
\begin{multline*}
h_T^{-2}\|v -\mathcal{J}^2_{0h} (v)\|_{T}
+
h_T^{-\frac32}\|v -\mathcal{J}^2_{0h} (v)\|_{\dK}
+
h_T^{-1}\|\nabla(v -\mathcal{J}^2_{0h} (v))\|_T \\
+
h_T^{-\frac12}
\|\partial_n(v -\mathcal{J}^2_{0h} (v))\|_{\dK}
+  \|\hes \mathcal{J}^2_{0h} (v)\|_{T}
\lesssim \|\hes v\|_{T}.
\end{multline*}
Invoking \eqref{BS interpolation}, we infer that
\begin{equation*}
\begin{split}
\bigg\{\frac{p}{h_T}\bigg\}^{2}\|v &- \mBS^p (v)\|_{T}  = \bigg\{\frac{p}{h_T}\bigg\}^{2} \|(v -  \mathcal{J}^2_{0h} (v)) - \BS^p(v -  \mathcal{J}^2_{0h}  (v) )\|_{T} \\
& \lesssim  \ell_\Omega^{-2} \|v -  \mathcal{J}^2_{0h} (v)\|_{H^2(T)} \\
& \lesssim \ell_\Omega^{-2} \|v -  \mathcal{J}^2_{0h} (v)\|_{T} +
\ell_\Omega^{-1} \|\nabla(v -  \mathcal{J}^2_{0h} (v))\|_{T} + \|\hes(v -  \mathcal{J}^2_{0h} (v))\|_{T}\\
& \lesssim \|\hes v\|_{T},
\end{split}
\end{equation*}
where the last bound follows from the above approximation properties of $\mathcal{J}^2_{0h}$ and $h_T\le \ell_\Omega$.
This establishes the estimate for the first term on the left-hand side of~\eqref{eq: Modified BS approximation}. The remaining terms are handled similarly.
\end{proof}

\section{Hybrid high-order discretization}\label{sec:HHO}

In this section, we present the main ideas underlying the HHO discretization and state some useful results for the forthcoming analysis.

\subsection{Local reconstruction and stabilization}

Let $k\ge0$ be the polynomial degree.
For every mesh cell $T \in \mesh,$ the local HHO space is
\begin{align}
\VkT := \mathbb{P}^{k+2}(T) \times \mathbb{P}^{k+2}(\FK) \times \mathbb{P}^{k}(\FK),
\end{align}
with $\mathbb{P}^{k}(\FK) := \times_{F\in\FK} \mathbb{P}^k(F)$.
A generic element $\widehat{v}_T:= \big( v_T, v_{\dK}, \gamma_{\dK} \big) \in \VkT$ is a triple, where the first component $v_T \in \mathbb{P}^{k+2}(T)$ aims at approximating the solution in $T$, the second component $v_{\dK} \in \mathbb{P}^{k+2}(\FK)$ its trace on $\dK$, and the third component $\gamma_{\dK} \in \mathbb{P}^{k}(\FK)$ the normal derivative (along $\n_T$) on $\dK$.

The first step in the devising of the HHO method is a local discrete reconstruction operator. For all $T \in \mesh$,  this operator $ R_T^{k+2}: \widehat{V}_T^k \rightarrow \mathbb{P}^{k+2}(T)$ is such that, for all $\widehat{v}_T := \big( v_T, v_{\dK}, \gamma_{\dK} \big)\in \VkT $, $R_T^{k+2}(\widehat{v}_T)\in \mathbb{P}^{k+2}(T)$ is uniquely determined by solving the following well-posed problem:
\begin{subequations}\label{reconstruct}
\begin{align}
(\hes  R_T^{k+2} (\widehat{v}_T),  \hes w)_{T} ={}& (\hes  v_T,  \hes w)_{T}  + (v_T-v_{\dK},\partial_n \Delta w)_{\dK}  \nonumber \\
&- ( \partial_n v_T - \gamma_{\dK},\partial_{nn} w)_{\dK}
-  (\partial_t(v_T- v_{\dK}),\partial_{nt} w)_{\dK}, \label{reconstruct_a}\\
( R_T^{k+2} (\widehat{v}_T), \xi)_T ={}& (v_T, \xi)_T, \label{reconstruct_b}
\end{align}
\end{subequations}
with test functions $w \in \mathbb{P}^{k+2}(T)$ in~\eqref{reconstruct_a} (one obtains
$0=0$ whenever $w\in\mathbb{P}^1(T)$)
and test functions $\xi\in \mathbb{P}^1(T)$ in~\eqref{reconstruct_b}.
Invoking~\eqref{eq:IPP_bis}, \eqref{reconstruct_a} can be rewritten as
\begin{equation}\label{Def: Reconstruction}
(\hes  R_T^{k+2} (\widehat{v}_T),  \hes w)_{T} = ( v_T,  \Delta^2 w)_{T} -  (v_{\dK},\partial_n \Delta w)_{\dK}
+( \gamma_{\dK},\partial_{nn} w)_{\dK} +   (\partial_t v_{\dK}, \partial_{nt} w)_{\dK}.
\end{equation}

The second devising step is a local stabilization bilinear form $S_{\dK}: \widehat{V}_T^k \times \widehat{V}_T^k \rightarrow \mathbb{R}$ such that,  for all $(\hat{v}_T, \hat{w}_T)\in \widehat{V}_T^k \times \widehat{V}_T^k$ with $\hat{v}_T:= (v_T,v_{\dK}, \gamma_{\dK})$ and
$\hat{w}_T:= (w_T,w_{\dK},\chi_{\dK})$,
\begin{subequations} \label{def:stab}
\begin{align}
S_{\dK} (\widehat{v}_T,  \widehat{w}_T) := {}& (k+2)^3\hbar_T^{-3} (v_{\dK} -v_T, w_{\dK} -w_T)_{\dK}\nonumber \\
&+(k+2)\hbar_T^{-1}(\gamma_{\dK} -\Pi^{k}_{\dK}(\partial_n v_T), \chi_{\dK} - \Pi^{k}_{\dK}(\partial_n w_T))_{\dK},
\end{align}
with the $hp$-scaling factor
\begin{equation}
\hbar_T := \frac{h_T}{k+2}.
\end{equation}
\end{subequations}

\begin{lemma}[Useful property]\label{prop}
For all $\widehat{v}_T \in \widehat{V}^k_T$ and all $T\in \mesh$, the following holds:
\begin{equation}\label{HHO relation}
\|\hes ( R_T^{k+2} (\widehat{v}_T)-v_T)\|^2_{T} \lesssim S_{\dK}( \widehat{v}_T, \widehat{v}_T).
\end{equation}
\end{lemma}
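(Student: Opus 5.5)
Since $R_T^{k+2}(\widehat{v}_T)-v_T\in\mathbb{P}^{k+2}(T)$, we test \eqref{reconstruct_a} with $w:=R_T^{k+2}(\widehat{v}_T)-v_T$. Subtracting $(\hes v_T,\hes w)_T$ from both sides gives
\begin{equation*}
\|\hes w\|_T^2 = (v_T-v_{\dK},\partial_n\Delta w)_{\dK} - (\partial_n v_T-\gamma_{\dK},\partial_{nn}w)_{\dK} - (\partial_t(v_T-v_{\dK}),\partial_{nt}w)_{\dK}.
\end{equation*}
We bound the three boundary terms separately by Cauchy--Schwarz and then apply the $hp$-trace and inverse inequalities to the polynomial $w$. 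The powers of $(k+2)/h_T$ must come out as $\hbar_T^{-3/2}$ for the trace jump and $\hbar_T^{-1/2}$ for the normal-derivative jump.

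\emph{First term.} We have $\partial_n\Delta w=\n_T{\cdot}\nabla\Delta w$ with $\nabla\Delta w\in\mathbb{P}^{k-1}(T)$. By \eqref{discrete} and then \eqref{eq:discrete inverse} (applied to $\Delta w\in\mathbb{P}^k(T)$),
\begin{equation*}
\|\partial_n\Delta w\|_{\dK}\lesssim \frac{k+1}{h_T^{1/2}}\,\|\nabla\Delta w\|_T\lesssim \frac{(k+1)k^2}{h_T^{3/2}}\,\|\hes w\|_T .
\end{equation*}
This yields $(k+2)^{3}h_T^{-3/2}$, which is worse than the target $(k+2)^{3/2}h_T^{-3/2}$ by a factor $(k+2)^{3/2}$. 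The raw inverse inequality therefore loses powers of $k$, and this is the main obstacle.

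To avoid the loss, I would not rely on the crude inverse estimate alone. Instead I would use an integration by parts to transfer derivatives and reduce the first and third terms to a single tangential-type expression. Writing $\delta:=v_{\dK}-v_T$, the boundary terms come from $(\hes R,\hes w)-(\hes v_T,\hes w)$. A cleaner route is a lifting argument: choose $\phi\in\mathbb{P}^{k+2}(T)$ with $\phi|_{\dK}$ close to $\delta$ and $\partial_n\phi$ close to $\gamma_{\dK}-\partial_nv_T$. This requires $hp$-polynomial extension operators from $\dK$ with $H^2$-stability $\|\hes\phi\|_T\lesssim \hbar_T^{-3/2}\|\delta\|_{\dK}+\hbar_T^{-1/2}\|\cdot\|_{\dK}$, which is hard to obtain $p$-robustly. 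I expect the paper instead accepts the $hp$ bounds in the form that is consistent with the stabilization weighting $(k+2)^3\hbar_T^{-3}=(k+2)^6h_T^{-3}$. Indeed, $\hbar_T^{-3}(k+2)^3=(k+2)^6/h_T^3$, which matches the square of the crude bound $(k+2)^3h_T^{-3/2}$ above. Hence the first term is bounded by $S_{\dK}^{1/2}\|\hes w\|_T$ directly.

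\emph{Remaining terms.} For the third term, $\partial_{nt}w$ is a trace of $\hes w$, so \eqref{discrete} gives a factor $(k+1)h_T^{-1/2}$. The tangential derivative $\partial_t\delta$ is handled by a face inverse inequality $\|\partial_t\delta\|_{F}\lesssim (k+2)^2h_F^{-1}\|\delta\|_F$. The product is $(k+2)^3h_T^{-3/2}$, which again matches. For the second term, we first insert $\Pi^k_{\dK}$, using that $\partial_{nn}w$ has degree $k$ on each face, so $\partial_nv_T$ may be replaced by $\Pi^k_{\dK}(\partial_nv_T)$. Then \eqref{discrete} gives $(k+1)h_T^{-1/2}$, whose square is $(k+2)\hbar_T^{-1}$, matching the second stabilization weight.

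Summing the three bounds gives $\|\hes w\|_T^2\lesssim S_{\dK}(\widehat{v}_T,\widehat{v}_T)^{1/2}\|\hes w\|_T$, and dividing by $\|\hes w\|_T$ concludes. The main check is the consistent bookkeeping of the powers of $k$, namely that $(k+2)^3\hbar_T^{-3}$ equals $(k+2)^6h_T^{-3}$; there I initially misjudged the scaling as a loss.
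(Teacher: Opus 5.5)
Your final argument is correct and is essentially the paper's proof. You test \eqref{reconstruct_a} with $w=R_T^{k+2}(\widehat{v}_T)-v_T$, apply Cauchy--Schwarz, and replace $\partial_n v_T$ by $\Pi^k_{\dK}(\partial_n v_T)$ since $\partial_{nn}w|_F\in\mathbb{P}^k(F)$. You then use the $hp$ trace and inverse inequalities, the latter also on faces for $\partial_t$. These produce exactly the weights $(k+2)^6h_T^{-3}=(k+2)^3\hbar_T^{-3}$ and $(k+2)^2h_T^{-1}=(k+2)\hbar_T^{-1}$ of $S_{\dK}$.

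The lifting detour is unnecessary: the supposed loss of $(k+2)^{3/2}$ came only from misreading the first stabilization weight, as you noticed yourself.
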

\begin{proof}
Using \eqref{reconstruct_a} and the Cauchy--Schwarz inequality, we deduce, for all $w \in \mathbb{P}^{k+2}(T)$, that
\begin{align*}
(\hes( R_T^{k+2} (\widehat{v}_T) -v_T), \hes w)_{T}  \leq {}&  \|\partial_n \Delta w\|_{\dK} \|v_T-v_{\dK}\|_{\dK}  +  \|\partial_{nt} w\|_{\dK} \|\partial_t(v_T- v_{\dK})\|_{\dK} \\
&+  \|\partial_{nn} w\|_{\dK} \|\Pi^k_{\dK}(\partial_n v_T - \gamma_{\dK})\|_{\dK}.
\end{align*}
Invoking the discrete trace inequality \eqref{discrete} and the inverse inequality~\eqref{eq:discrete inverse} with $p:=k+2$ yields
\begin{align*}
\|\hes&( R_T^{k+2} (\widehat{v}_T) -v_T)\|_{T}^2 \lesssim
\frac{(k+2)^6}{h^3_T}\|v_{\dK} - v_T\|^2_{\dK} + \frac{(k+2)^2}{h_T}  \|\gamma_{\dK}- \Pi^{k}_{\dK}(\partial_n v_T)\|^2_{\dK}.
\end{align*}
Recalling \eqref{def:stab} completes the proof.
\end{proof}

\begin{remark}[$d=2$] \label{rem:2d_hho}
We point out that, for $d=2$, it is possible to define the HHO method with the second component $v_{\dK} \in \mathbb{P}^{k+1}(\FK)$ instead of $\mathbb{P}^{k+2}(\FK)$. This is the HHO(A) method defined in \cite{DongErn2021biharmonic}. The present analysis also applies to this variant; see Remark~\ref{rem:2D}.
\end{remark}


\subsection{Discrete problem}

The global HHO space is defined as
\begin{equation}
\widehat{V}^k_h:=  \mathbb{P}^{k+2} (\mesh)  \times  \mathbb{P}^{k+2} (\Fall)  \times \mathbb{P}^{k} (\Fall).
\end{equation}
A generic element $\widehat{v}_h \in \widehat{V}^k_h$ is denoted as $\widehat{v}_h := \big( v_{\mesh},  v_{\Fall}, \gamma_{\Fall} \big)$ with  $v_{\mesh} := (v_T)_{T \in \mesh}$, $v_{\Fall} := (v_F)_{F \in \Fall}$, and $\gamma_{\Fall} := (\gamma_F)_{F \in \Fall}$. For every mesh cell $T \in \mesh,$  the local components of $\widehat{v}_h$ are
\begin{equation} \label{eq:localization}
\widehat{v}_T := \big( v_T, v_{\dK}:=(v_F)_{F\in\FK}, \gamma_{\dK}:=((\b{n}_F{\cdot}\b{n}_{T}) \gamma_{F})_{F\in\FK} \big) \in \widehat{V}_T^k.
\end{equation}
Notice that the localization of the third component takes into account the relative orientation of $F$ and $T$ through the factor $\n_F{\cdot}\n_T=\pm1$.
Homogeneous boundary conditions are enforced strongly by considering the subspace
\begin{equation}
\widehat{V}^k_{h0}:= \big\{ \widehat{v}_h \in \widehat{V}^k_h\;|\;v_F \equiv 0, \; \gamma_F \equiv {0}, \; \forall F \in \cF^b_h\big\}.
\end{equation}

The discrete HHO biharmonic problem reads as follows: Find $\widehat{u}_h \in \Vhkz$ such that
\begin{equation}\label{discrete_prob}
a_h(\widehat{u}_h, \widehat{w}_h) = (f, w_{\mesh}) \qquad \forall \widehat{w}_h \in \Vhkz,
\end{equation}
with the global discrete bilinear form $a_h(\widehat{v}_h, \widehat{w}_h) := \sum_{T \in \mesh}a_T(\widehat{v}_T,  \widehat{w}_T)$ where
\begin{equation}
a_T( \widehat{v}_T, \widehat{w}_T):= (\hes  R_T^{k+2} (\widehat{v}_T),  \hes  R_T^{k+2} (\widehat{w}_T))_{T} + S_{\dK} (\widehat{v}_T,  \widehat{w}_T).
\end{equation}
As shown in \cite[Lemma~4.1]{DongErn2021biharmonic}, the discrete problem~\eqref{discrete_prob} is well-posed. Moreover, \eqref{discrete_prob} is amenable to static condensation, i.e., the cell unknowns can be locally eliminated within each mesh cell, yielding a global linear system coupling only the face unknowns in $\mathbb{P}^{k+2}(\Fall) \times \mathbb{P}^{k}(\Fall)$.


\subsection{Local interpolation operators} \label{sec:interp_HHO}

In this section, we consider two interpolation (reduction) operators that can be used to map the exact solution to the HHO space.
The first operator can be defined locally using the canonical hybrid finite element interpolation operator (see, e.g., \cite[Section 7.6]{Ern_Guermond_FEs_I_2021}). Its construction is dimension-dependent. Its main advantage is that it enjoys a remarkable property when composed with the HHO reconstruction operator (see Lemma~\ref{eq:bound:elliptic-ptojection} below). The downside is that its $hp$-approximation properties have not yet been studied analytically. In what follows, we invoke~\eqref{eq: assumption} below, which we state as an assumption for which we provide numerical verifications for $d\in\{2,3\}$ in Section~\ref{valid:example1}.
The second interpolation operator is defined globally using the modified Babu\v{s}ka--Suri $hp$-interpolation operator. Its needed $hp$-approximation properties are available (see Corollary~\ref{cor: global interpolation of BS}), but its use in the a posteriori error analysis leads to additional terms in the upper bound because this interpolation operator does not combine with the HHO reconstruction operator as nicely as the first interpolation operator.

Let us now give some details.
On every mesh cell $T\in\mesh$, the local
interpolation operator associated with the canonical hybrid finite element,
$\cC_{T}^{k+2}: H^2(T)\rightarrow \mathbb{P}^{k+2}(T)$ is specified by prescribing its degrees of freedom (assuming $d=3$ and recalling that $k\ge0$) as follows:
\begin{subequations} \label{eq:def_canonical_FEM} \begin{alignat}{2}
\label{vertex property}
\cC_{T}^{k+2}(v)(\b{a}) & =  v(\b{a}), &\qquad& \forall \b{a} \in \mathcal{V}_T, \\
\label{edge property}
(\cC_{T}^{k+2}(v), \xi_E)_{E} &=  (v, \xi_E)_{E}, &\qquad& \forall \xi_E \in\mathbb{P}^{k}(E), \forall E \in \mathcal{E}_T,\\
\label{face property}
(\cC_{T}^{k+2}(v), \xi_F)_{F} &=  (v, \xi_F)_{F}, &\qquad& \forall \xi_F \in\mathbb{P}^{k-1}(F), \forall F \in \mathcal{F}_T,\; k\ge1,\\
\label{cell property}
(\cC_{T}^{k+2}(v), \xi_T)_{T} &=  (v, \xi_T)_{T}, &\qquad& \forall \xi_T \in\mathbb{P}^{k-2}(T), \; k\geq 2,
\end{alignat} \end{subequations}
where $\mathcal{V}_T$, $\mathcal{E}_T$, and $\mathcal{F}_T$ collect the vertices, edges, and faces of $T$, respectively. For $d=2$, the operator $\cC_{T}^{k+2}$ is defined in an analogous way by employing \eqref{vertex property}, \eqref{edge property}, and \eqref{cell property} with $\xi_T \in \mathbb{P}^{k-1}(T)$, $k \geq 1$.
The operator $\cC_{T}^{k+2}$ can be used to define the interpolation (reduction) operator $\hat{\mathcal{I}}_{T}^k : H^2(T) \rightarrow \widehat{V}^k_T$ such that, for all $v\in H^2(T)$,
\begin{equation} \label{eq:local_inter_I}
\hat{\mathcal{I}}_{T}^k (v) := \big(\cC_T^{k+2}(v),\cC_T^{k+2}(v)|_{\dK},\Pi_{\dK}^k( \boldsymbol{n}_T {\cdot} \nabla v)\big).
\end{equation}

\begin{lemma}[$H^2$-elliptic projection]\label{eq:bound:elliptic-ptojection}
The following holds for all $v\in H^2(T)$ and all $T\in\mesh$:
\begin{equation} \label{eq:H2_ell_proj}
(\hes R^{k+2}_T (\hat{\mathcal{I}}_{T}^k (v)) , \hes w)_{T} = (\hes v, \hes w)_{T}
\qquad \forall w \in \mathbb{P}^{k+2}(T).
\end{equation}
\end{lemma}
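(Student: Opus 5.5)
Plan: start from the rewritten definition \eqref{Def: Reconstruction} of the reconstruction, applied with $\widehat{v}_T:=\hat{\mathcal{I}}_T^k(v)=(\cC_T^{k+2}(v),\cC_T^{k+2}(v)|_{\dK},\Pi^k_{\dK}(\partial_n v))$. For every $w\in\mathbb{P}^{k+2}(T)$ this gives
\begin{equation*}
(\hes R^{k+2}_T(\hat{\mathcal{I}}_T^k(v)),\hes w)_T = (\cC_T^{k+2}(v),\Delta^2 w)_T - (\cC_T^{k+2}(v),\partial_n\Delta w)_{\dK} + (\Pi^k_{\dK}(\partial_n v),\partial_{nn}w)_{\dK} + (\partial_t \cC_T^{k+2}(v),\partial_{nt}w)_{\dK}.
\end{equation*}
On the other hand, the integration by parts formula \eqref{eq:IPP_bis} (justified for $v\in H^2(T)$ by density, since $w$ is a polynomial) yields the same expression with $\cC_T^{k+2}(v)$ and $\Pi^k_{\dK}(\partial_n v)$ replaced by $v$ and $\partial_n v$. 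The proof therefore reduces to checking that each of the four terms is unchanged under these replacements, using only the degrees of freedom \eqref{eq:def_canonical_FEM} and the degrees of the polynomials involving $w$.

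The cell and normal-normal terms are immediate. Since $\Delta^2 w\in\mathbb{P}^{k-2}(T)$, \eqref{cell property} gives $(\cC_T^{k+2}(v)-v,\Delta^2w)_T=0$ (and $\Delta^2 w=0$ if $k<2$). In $d=2$ one uses $\mathbb{P}^{k-1}(T)$ instead, which is larger still. Since $\partial_{nn}w|_F\in\mathbb{P}^k(F)$, the definition of the $L^2$-projection gives $(\Pi^k_{\dK}(\partial_n v)-\partial_n v,\partial_{nn}w)_{\dK}=0$. For the third-order boundary term, $\partial_n\Delta w|_F\in\mathbb{P}^{k-1}(F)$, so \eqref{face property} gives $(\cC_T^{k+2}(v)-v,\partial_n\Delta w)_F=0$ for $d=3$. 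For $d=2$ the faces are edges and \eqref{edge property} with $\mathbb{P}^k(E)\supset\mathbb{P}^{k-1}(E)$ applies.

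The main obstacle is the tangential term $(\partial_t(\cC_T^{k+2}(v)-v),\partial_{nt}w)_F$, because the degrees of freedom only control moments of the function and not of its tangential gradient. Setting $\delta:=\cC_T^{k+2}(v)-v$ and $\b{q}:=\partial_{nt}w|_F\in\mathbb{P}^{k}(F;\RR^d)$ (tangential), I integrate by parts on each face $F$:
\begin{equation*}
(\partial_t\delta,\b{q})_F = -(\delta,{\rm div}_F\b{q})_F + \sum_{E\in\mathcal{E}_F}(\delta,\b{q}{\cdot}\b{n}_{F,E})_E .
\end{equation*}
The face term vanishes by \eqref{face property}, since ${\rm div}_F\b{q}\in\mathbb{P}^{k-1}(F)$. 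The edge terms vanish by \eqref{edge property}, since $\b{q}{\cdot}\b{n}_{F,E}\in\mathbb{P}^k(E)$ on each straight edge. For $d=2$ the face $F$ is an edge, and the integration by parts gives $-(\delta,\partial_t q)_F$ plus endpoint values $\delta(\b{a})q(\b{a})$. The first vanishes by \eqref{edge property} and the endpoint values vanish by \eqref{vertex property}; similarly, in $d=3$ the vertex condition is implicitly what makes the edge moments consistent. The integration by parts on $F$ needs $\delta|_F\in H^1(F)$, which requires some care because $v\in H^2(T)$ only gives traces in $H^{3/2}(\dK)$ and edge traces in $d=3$. I would handle this by proving the identity for smooth $v$ and concluding by density, using the continuity of all degrees of freedom on $H^2(T)$ ($H^2\hookrightarrow C^0$ for $d\le3$). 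Summing the four vanishing differences proves \eqref{eq:H2_ell_proj}.
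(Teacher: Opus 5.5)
Your proposal is correct. The paper gives no argument of its own for this lemma; it only cites \cite[Lemma~4.9]{DE23}. Your proof is therefore a self-contained replacement for that citation.

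The route is the natural direct one. Subtracting the integration by parts formula \eqref{eq:IPP_bis} for $v$ from \eqref{Def: Reconstruction} leaves four moment identities for $\delta:=\cC_T^{k+2}(v)-v$, and you match each one to a group of degrees of freedom:
\begin{itemize}
\item the cell moments handle $\Delta^2 w\in\mathbb{P}^{k-2}(T)$;
\item the definition of $\Pi^k_{\dK}$ handles $\partial_{nn}w\in\mathbb{P}^{k}(F)$;
\item the face moments against $\mathbb{P}^{k-1}(F)$ handle both $\partial_n\Delta w$ and the surface divergence of $\partial_{nt}w$;
\item the edge moments against $\mathbb{P}^{k}(E)$ handle the boundary terms produced by integrating by parts on each face;
\item in $d=2$ only, the vertex values kill the endpoint terms.
\end{itemize}
Compared with the citation, this makes transparent why the canonical hybrid element is exactly the right one. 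It explains the choice of face moments up to degree $k-1$ and edge moments up to degree $k$. It also shows that in $d=2$ the cell moments against $\mathbb{P}^{k-1}(T)$ are more than needed.

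Two small corrections. First, in $d=3$ the vertex values play no role in the identity: integrating by parts on a face produces only edge integrals, with no point terms. Your sentence about the vertex condition ``making the edge moments consistent'' is unneeded and should be dropped; the vertex values matter there only for unisolvence. Second, the regularity concern is milder than you suggest. For $v\in H^2(T)$ the trace on each face already lies in $H^{3/2}(F)$, so $\delta|_F\in H^1(F)$ and the face integration by parts is justified directly. Your density argument is also valid, since all degrees of freedom are continuous on $H^2(T)$ for $d\le 3$.
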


\begin{proof}
A proof can be found in \cite[Lemma~4.9]{DE23}.
\end{proof}

Since the $hp$-approximation properties of the interpolation operator $\cC_T^{k+2}$ have not yet been studied analytically, we make the following assumption which is supported by numerical experiments in Subsection~\ref{valid:example1}.

\begin{assumption}\label{assumption 1}
The following holds for all $v \in H^2(T)$ and all $T \in \mesh$:
\begin{align}\label{eq: assumption}
\| \nabla(v- \cC_T^{k+2} (v) )\|_{\dK}^2
\lesssim\, \hbar_T \|\hes v\|_{T}^2.
\end{align}
\end{assumption}

We now introduce two global HHO interpolation (reduction) operators $\hat{\mathcal{I}}_{h}^k: H^1_0(\Omega) \cap H^2(\Omega) \rightarrow \Vhkz$ and $\hat{\mathcal{J}}_{h}^k: H^1_0(\Omega) \cap H^2(\Omega) \rightarrow \Vhkz$, such that, for all $v \in H^1_0(\Omega)\cap H^2(\Omega)$,
\begin{subequations} \begin{align}
\hat{\mathcal{I}}_{h}^k (v) &:= \big(\cC_h^{k+2}(v), (\cC_h^{k+2}(v)|_F)_{F\in\Fall}, (\Pi_{F}^k( \boldsymbol{n}_F {\cdot}(\nabla v)|_F))_{F\in \Fall}\big), \label{def: global HHO interpolation CH}\\
\hat{\mathcal{J}}_{h}^k (v) &:= \big(\mBS^{k+2}(v), (\mBS^{k+2}(v)|_F)_{F\in\Fall}, (\Pi_{F}^k( \boldsymbol{n}_F {\cdot}(\nabla v)|_F))_{F\in \Fall}\big). \label{def: global HHO interpolation BS}
\end{align} \end{subequations}
Both interpolation operators enjoy two remarkable properties: their first component sits in $H^1_0(\Omega)$ and their second component is the trace on the mesh skeleton of its first component. We use the localization mechanism described in~\eqref{eq:localization} to define the local components of these interpolation operators attached to a generic mesh cell $T\in\mesh$. This leads to the triple $\hat{\mathcal{I}}_{T}^k (v)$ (which coincides with~\eqref{eq:local_inter_I}) and the triple $\hat{\mathcal{J}}_{T}^k (v) := \big(\mBS^{k+2}(v)|_T, (\mBS^{k+2}(v)|_F)_{F\in\FK}, (\Pi_{F}^k( \boldsymbol{n}_T {\cdot} (\nabla v)_F ))_{F\in\FK}\big)$.

\begin{lemma}[Bound on stabilization] \label{lem:bound_stab}
\textup{(i)} Under Assumption \ref{assumption 1}, the following holds for all $v\in H^2(T)$ and all $T\in\mesh$:
\begin{subequations}
\begin{equation}\label{Stabilisation of reduction bound: CH}
S_{\dK}(\widehat{\mathcal{I}}_{T}^k(v), \widehat{\mathcal{I}}_{T}^k(v))\lesssim (k+2)\|\hes v\|^2_{T}.
\end{equation}
\textup{(ii)} The following holds for all $v\in H^1_0(\Omega)\cap H^2(\Omega)$ and all $T\in\mesh$:
\begin{equation} \label{Stabilisation of reduction bound: BS}
S_{\dK}(\widehat{\mathcal{J}}_{T}^k(v), \widehat{\mathcal{J}}_{T}^k(v))\lesssim   (k+2)\|\hes v\|^2_{T}.
\end{equation}
\end{subequations}
\end{lemma}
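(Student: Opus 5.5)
The plan is to write out the two contributions to the stabilization \eqref{def:stab} and bound each one separately for both interpolation operators. For a generic triple $(v_T, v_{\dK}, \gamma_{\dK})$ with $v_{\dK} = v_T|_{\dK}$, which holds for both $\widehat{\mathcal{I}}_T^k(v)$ and $\widehat{\mathcal{J}}_T^k(v)$, the first term $(k+2)^3\hbar_T^{-3}\|v_{\dK}-v_T\|^2_{\dK}$ vanishes identically. It therefore remains to bound
\[
(k+2)\hbar_T^{-1}\|\Pi^k_{\dK}(\n_T{\cdot}\nabla v) - \Pi^k_{\dK}(\partial_n v_T)\|^2_{\dK}
= (k+2)\hbar_T^{-1}\|\Pi^k_{\dK}(\partial_n (v - v_T))\|^2_{\dK},
\]
where $v_T$ is either $\cC_T^{k+2}(v)$ or $\mBS^{k+2}(v)|_T$. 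By the $L^2(F)$-stability of $\Pi^k_F$ on each face, this is at most $(k+2)\hbar_T^{-1}\|\partial_n(v-v_T)\|^2_{\dK}$. Since $\hbar_T = h_T/(k+2)$, the prefactor equals $(k+2)^2 h_T^{-1}$.

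\textbf{Case (i).} Use $|\partial_n w| \le |\nabla w|$ pointwise and Assumption~\ref{assumption 1}:
\[
(k+2)\hbar_T^{-1}\|\nabla(v-\cC_T^{k+2}(v))\|^2_{\dK} \lesssim (k+2)\hbar_T^{-1}\hbar_T\|\hes v\|_T^2 = (k+2)\|\hes v\|^2_T.
\]
This gives \eqref{Stabilisation of reduction bound: CH}.

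\textbf{Case (ii).} Apply Corollary~\ref{cor: global interpolation of BS} with $p := k+2 \ge 2$. Its fourth term gives
\[
\|\partial_n(v-\mBS^{k+2}(v))\|^2_{\dK} \lesssim \frac{h_T}{k+2}\|\hes v\|^2_T = \hbar_T \|\hes v\|_T^2.
\]
Multiplying by $(k+2)\hbar_T^{-1}$ yields \eqref{Stabilisation of reduction bound: BS}. The hypotheses of the corollary hold because $v\in H^1_0(\Omega)\cap H^2(\Omega)\subset H^{\frac32+\epsilon}(\Omega)\cap H^2(\mesh)$.

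One point to check is that the localized third component in \eqref{eq:localization}, namely $(\n_F{\cdot}\n_T)\Pi^k_F(\n_F{\cdot}\nabla v)$, indeed equals $\Pi^k_F(\n_T{\cdot}\nabla v)$. This holds because $v\in H^2$, so its gradient trace is single-valued. There is no genuine obstacle here: the whole difficulty has been pushed into Assumption~\ref{assumption 1} and Corollary~\ref{cor: global interpolation of BS}. The only care needed is tracking the $hp$-scalings $\hbar_T$ versus $h_T/p$, which match exactly.
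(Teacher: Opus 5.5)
Your proposal is correct and follows the paper's own proof. The first stabilization term vanishes because the face component is the trace of the cell component. The remaining normal-derivative term is then bounded using the $L^2$-stability of $\Pi^k_F$, followed by Assumption~\ref{assumption 1} in case (i) and the fourth term of \eqref{eq: Modified BS approximation} with $p=k+2$ in case (ii), where $h_T/(k+2)=\hbar_T$ yields exactly the factor $(k+2)$.
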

\begin{proof}
Since the second component of $\widehat{\mathcal{I}}_{T}^k(v)$
is the trace on $\dK$ of its first component, we infer that
\begin{equation*}
S_{\dK} (\widehat{\mathcal{I}}_T^k(v), \widehat{\mathcal{I}}_T^k(v)) =
(k+2) \hbar_T^{-1} \|\Pi_{\dK}^{k}\big(\partial_nv - \partial_n {\cC}_T^{k+2}(v)\big)\|^2_{\dK}.
\end{equation*}
Using the $L^2$-stability of $\Pi^k_{\dK}$ and invoking Assumption \ref{assumption 1} proves~\eqref{Stabilisation of reduction bound: CH}. The proof of~\eqref{Stabilisation of reduction bound: BS} is similar, but does not need to invoke any assumption as the $hp$-approximation estimate is available from Corollary~\ref{cor: global interpolation of BS}.
\end{proof}

\section{Local Helmholtz decomposition}\label{sec: Local Helmholtz Decomposition}

In this section, we derive a Helmholtz decomposition of tensor-valued fields in $\ut{\b{L}}^2(\oma)$ where $\oma$ is the patch associated with the generic mesh vertex $\ba\in\vertice$. We consider the case $d=3$ (the case $d=2$ is analogous and simpler). The key novelty is to show that the stability constant only depends on the mesh shape-regularity.

We first clarify several geometric properties of vertex patches. Since the mesh is shape-regular, the number of simplices sharing a vertex is uniformly bounded by a constant depending only on the shape-regularity parameter of the mesh and the space dimension. 
Moreover, if $\ba$ is an interior vertex, then $\oma$ is homeomorphic to an open ball, whereas, if $\ba$ is a boundary vertex, $\oma$ is homeomorphic to a half open ball. Therefore, every vertex patch, including boundary vertex patches, is a connected, simply connected Lipschitz domain with a connected boundary.
Moreover, for every interior vertex, $\oma$ is star-shaped with respect to a ball whose radius is comparable to the diameter of $\oma$, with a constant depending only on the mesh shape-regularity; see \cite[Proposition~8.2]{chaumontfrelet:hal-05204325}, building on the two-dimensional result of \cite{LeePre:79}. For every boundary vertex, uniform star-shapedness with respect to a single ball may fail, for example near re-entrant corners. In this case, $\oma$ can be decomposed into a uniformly bounded chain of star-shaped subdomains formed by unions of simplices sharing a common face; see the proof of \cite[Lemma~4.3]{Dong-Ern-DG:2026}.
An important consequence of these geometric properties is that there is a right inverse of the curl operator for divergence-free vector fields on vertex patches whose stability constant depends only on the mesh shape-regularity. This follows from \cite[Corollary~29]{GuzSal21} for interior vertex patches and from \cite[Theorem~35]{GuzSal21} for boundary vertex patches. This result will play a key role in establishing the stability estimate for the local Helmholtz decomposition.

Let $\varepsilon_{ijk}$ denote the Levi-Civita symbol for all $i,j,k\in\{1{:}d\}$.
We define the skew-symmetric tensor-valued operator $\sk$ such that $\sk(\b{v})_{ij}:=\varepsilon_{ijk} v_{k}$ for all $\bv:=(v_k)_{k\in\{1{:}d\}}$. We also recall that the (vector-valued) curl of $\bv$ has components $(\nabla {\times}\bv)_{i} := \varepsilon_{ijk} \partial_j v_{k}$ for all $i\in\{1{:}d\}$. Here and in what follows, we employ the usual summation convention on repeated indices. For a tensor-valued field $\ut{\b{A}} = (A_{ij})_{i,j\in\{1{:}d\}}$, its (tensor-valued) row-wise curl is defined as $(\curlrw{\b{\ut{A}}})_{ij}:= \varepsilon_{jkl} \partial_k A_{il}$ for all $i,j\in\{1{:}d\}$, and its (vector-valued) row-wise divergence is defined as $(\divrw{\ut{\b{A}}})_{i} = \partial_j A_{ij}$ for all $i\in\{1{:}d\}$. Notice that $\divrw(\curlrw{\b{\ut{A}}})=\boldsymbol{0}$.

\begin{lemma}[Local  Helmholtz decomposition] \label{lemma:Helmholtz-dec}
For all $\ba \in \vertice$ and all $\ut{\b{\Sigma}}\in \ut{\b{L}}^2(\oma)$,
there exist a (scalar-valued) function $\xi \in H^2_0(\oma)$, a (vector-valued) field $\b{\rho} \in \boldsymbol{H}_0(\mathrm{div}=0;\oma)$ (i.e., $\nabla {\cdot} \b{\rho} = 0$ in $\oma$ and $\b{\rho}{\cdot} \b{n}_{\oma}=0 $ on $\partial \oma$), and a (tensor-valued) field  $\ut{\b{\Psi}} \in \ut{\b{H}}^1(\oma)$, so that the following holds:
\begin{subequations} \begin{equation} \label{def:Helmholtz}
\ut{\b{\Sigma}} = \hes \xi + \sk(\b{\rho}) + \curlrw{\b{\ut{\Psi}}},
\end{equation}
with\begin{equation} \label{bound:Helmholtz}
\Vert \hes \xi \Vert _{\omega_{\ba}} + \Vert {\b{\rho}} \Vert_{\omega_{\ba}} + |\ut{\b{\Psi}} |_{\ut{\b{H}}^1(\omega_{\ba})} \lesssim \Vert \ut{\b{\Sigma}} \Vert_{\omega_{\ba}}.
\end{equation}
\end{subequations}
\end{lemma}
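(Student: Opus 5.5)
The decomposition will be built in three successive steps, each removing one piece of $\ut{\b{\Sigma}}$. Each step invokes a stable right inverse whose constant depends only on the mesh shape-regularity, thanks to the geometric properties of $\oma$ recalled above.

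\textbf{Step 1 (Hessian part).} Let $\xi\in H^2_0(\oma)$ solve the local biharmonic problem
\[
(\hes\xi,\hes\zeta)_{\oma}=(\ut{\b{\Sigma}},\hes\zeta)_{\oma}\qquad\forall\zeta\in H^2_0(\oma).
\]
By Lax--Milgram (coercivity of $\|\hes\cdot\|_{\oma}$ on $H^2_0$) this gives $\|\hes\xi\|_{\oma}\le\|\ut{\b{\Sigma}}\|_{\oma}$ with constant one, independently of the shape of $\oma$. The remainder $\ut{\b{R}}:=\ut{\b{\Sigma}}-\hes\xi$ then satisfies $\divrw\divrw\ut{\b{R}}=0$ in the distributional sense, since it is tested against $H^2_0$.

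\textbf{Step 2 (skew part).} Next I will peel off a skew-symmetric part carried by a divergence-free field. Set $\b{\theta}:=\divrw\ut{\b{R}}\in\b{H}^{-1}(\oma)$; Step 1 gives $\nabla{\cdot}\b{\theta}=0$ in $H^{-2}$, i.e., $\b{\theta}$ annihilates gradients of $H^2_0$ functions. The identity $\divrw\sk(\b{\rho})=-\nabla{\times}\b{\rho}$ (up to sign) suggests seeking $\b{\rho}\in\b{H}_0(\mathrm{div}=0;\oma)$ in $\b{L}^2$ with $\nabla{\times}\b{\rho}=\mp\b{\theta}$. For this I use the regularized right inverse of the curl on vertex patches, namely \cite[Corollary~29]{GuzSal21} for interior patches and \cite[Theorem~35]{GuzSal21}, with the chain-of-star-shaped-domains argument, for boundary patches. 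Its $H^{-1}\to L^2$ version gives $\|\b{\rho}\|_{\oma}\lesssim\|\b{\theta}\|_{H^{-1}}\le\|\ut{\b{R}}\|_{\oma}$. Two conditions must be enforced here: the normal boundary condition together with $\nabla{\cdot}\b{\rho}=0$, and the compatibility condition that $\b{\theta}$ vanish on harmonic fields. The latter holds because $\oma$ is simply connected with connected boundary. I would obtain both by composing the Bogovski\u{\i}-type operator with a correction through the Neumann Laplacian, whose Poincaré constant on $\oma$ also depends only on shape-regularity.

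\textbf{Step 3 (curl part).} Now $\ut{\b{R}}':=\ut{\b{R}}-\sk(\b{\rho})$ is row-wise divergence free and lies in $\ut{\b{L}}^2$. Applying row by row the $L^2\to H^1$ right inverse of the divergence-free curl on $\oma$ (again \cite{GuzSal21}, without boundary conditions on the potential) yields $\ut{\b{\Psi}}$ with $\curlrw\ut{\b{\Psi}}=\ut{\b{R}}'$ and $|\ut{\b{\Psi}}|_{\ut{\b{H}}^1}\lesssim\|\ut{\b{R}}'\|_{\oma}$. No compatibility condition is needed because $\partial\oma$ is connected. Collecting the three bounds gives \eqref{bound:Helmholtz}.

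The main obstacle is Step 2. It requires a curl inverse from $H^{-1}$ to $L^2$ with a normal boundary condition and a uniform constant. The references supply $L^2\to H^1$ bounds, so I would first try duality, transposing the $H^1_0$-regular potential construction on star-shaped subdomains and patching along the chain. Alternatively, I would rework the order of the steps by first applying Step 3's inverse to $\ut{\b{R}}$ modulo its symmetric trace, if the transposition fails.
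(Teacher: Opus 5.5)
Your Steps~1 and~3 coincide with the paper's proof. Step~2, however, is the only delicate step, and it is not actually established. You attribute the bound $\|\b{\rho}\|_{\oma}\lesssim\|\b{\theta}\|_{\b{H}^{-1}(\oma)}$ to an ``$H^{-1}\to L^2$ version'' of the curl inverses of \cite{GuzSal21}, which, as you note yourself, only supply $L^2\to H^1$ bounds. The operator-level repairs you sketch do not close this gap:
\begin{itemize}
\item A support-preserving (Bogovski\u{\i}-type) inverse acts on distributions supported in $\overline{\oma}$, i.e., on the dual of $H^1(\oma)$. But $\b{\theta}=\divrw{\ut{\b{R}}}$ is only known to annihilate $\nabla H^2_0(\oma)$. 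Consequently $\divrw$ of the zero extension of $\ut{\b{R}}$ is not divergence-free on $\mathbb{R}^3$: testing with a smooth $\phi$ gives $(\ut{\b{R}},\hes\phi)_{\oma}$, which vanishes only when $\phi|_{\oma}\in H^2_0(\oma)$.
\item A Poincar\'e-type inverse on $H^{-1}(\oma)$ does exist on star-shaped sets. Using it would require tracking its negative-norm constants and gluing it along the chain of star-shaped subdomains for boundary patches, which is exactly the work you leave open.
\end{itemize}

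The missing idea is to separate existence from stability.
\begin{itemize}
\item Existence of $\b{\rho}\in\b{H}_0(\mathrm{div}=0;\oma)$ with $\nabla{\times}\b{\rho}=\divrw{\ut{\b{R}}}$ needs no uniform constant. It follows from classical vector-potential theory on simply connected domains with connected boundary; the paper cites \cite{Amrouche07}.
\item The uniform bound is then an a priori estimate valid for any such $\b{\rho}$. Since $\b{\rho}$ is divergence-free with vanishing normal trace, \cite[Corollary~29 \& Theorem~35]{GuzSal21} provide $\b{w}\in\b{H}^1_0(\oma)$ with $\nabla{\times}\b{w}=\b{\rho}$ and $\|\nabla\b{w}\|_{\oma}\lesssim\|\b{\rho}\|_{\oma}$. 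Hence
\[
\|\b{\rho}\|_{\oma}^2=(\b{\rho},\nabla{\times}\b{w})_{\oma}=\langle\nabla{\times}\b{\rho},\b{w}\rangle_{\oma}\le\|\divrw{\ut{\b{R}}}\|_{\b{H}^{-1}(\oma)}\|\nabla\b{w}\|_{\oma}\lesssim\|\ut{\b{R}}\|_{\oma}\|\b{\rho}\|_{\oma}.
\]
\end{itemize}
This is the ``duality'' you allude to, but it needs no transposed operator and no patching at the $H^{-1}$ level. The $L^2\to H^1_0$ inverse is applied to $\b{\rho}$ itself, and \cite{GuzSal21} already covers boundary patches. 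Incidentally, your Neumann correction is the $L^2$-orthogonal projection onto $\b{H}_0(\mathrm{div}=0;\oma)$, hence a contraction, so no Poincar\'e constant enters there.
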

\begin{proof}
(1) Let $\xi\in H^2_0(\oma)$ be such that
\begin{equation} \label{eq:def_xi}
(\hes \xi, \hes v)_{\oma} = (\ut{\b{\Sigma}} , \hes v)_{\oma}
\qquad \forall v\in H^2_0(\oma).
\end{equation}
The Cauchy--Schwarz inequality gives
\begin{equation}\label{local HD bound 1}
\|\hes \xi\|_{\oma} \leq  \| \ut{\b{\Sigma}}\|_{\oma}.
\end{equation}
Moreover, considering arbitrary test functions $v\in C_0^\infty(\oma)$ in~\eqref{eq:def_xi} implies that
\begin{equation} \label{eq:divrw=0}
\nabla {\cdot} (\divrw{(\hes \xi - \ut{\b{\Sigma}})}) =0.
\end{equation}
Moreover, since $(\hes \xi - \ut{\b{\Sigma}})\in \ut{\b{L}}^2(\oma)$, we infer that
\begin{align*}
\| \divrw {(\hes \xi - \ut{\b{\Sigma}})}\|_{\b{H}^{-1}(\oma)}
&=   \sup_{\b{v} \in \b{H}^1_0(\oma )}  \frac{|\langle \divrw{ (\hes \xi - \ut{\b{\Sigma}})},\b{v}\rangle_{\oma}|}{\|\nabla \b{v}\|_{\oma}} \\
&
 =  \sup_{\b{v} \in \b{H}^1_0(\oma)}  \frac{|(\hes \xi - \ut{\b{\Sigma}}, \nabla \b{v})_{\oma}|}{\|\nabla \b{v}\|_{\oma}}
\leq  \|\hes \xi - \ut{\b{\Sigma}}\|_{\oma} \leq 2\|\ut{\b{\Sigma}}\|_{\oma},
\end{align*}
where $\langle \cdot, \cdot \rangle_{\oma}$ denotes the duality pairing between $\b{H}^{-1}(\oma)$ and $\b{H}^1_0(\oma)$ and where the last bound follows from \eqref{local HD bound 1}.

(2) Owing to~\eqref{eq:divrw=0} and since $\oma$ is simply connected with a connected boundary, we infer from \cite[Section 3]{Amrouche07} that there exists vector field $\b{\rho} \in \boldsymbol{H}_0(\mathrm{div}=0;\oma)$ such that $\nabla {\times} \b{\rho} = -\divrw {(\hes \xi - \ut{\b{\Sigma}})}$. This identity can be rewritten as
\begin{equation}\label{local HD bound 5}
\divrw( \ut{\b{\Sigma}} - \sk(\b{\rho}) -\hes \xi ) = \b{0}.
\end{equation}
Moreover, we observe that
\begin{equation*}
\| \nabla {\times}  \b{\rho}\|_{\b{H}^{-1}(\oma)}
= \underset{{\b{v} \in \b{H}^1_0(\oma)}}{\sup} \frac{|\langle \nabla {\times}  \b{\rho},\b{v}\rangle_{\oma}|}{\|\nabla \b{v}\|_{\oma}}
= \underset{{\b{v} \in \b{H}^1_0(\oma)}}{\sup} \frac{|( \b{\rho},  \nabla {\times} \b{v})_{\oma}|}{\|\nabla \b{v}\|_{\oma}}.
\end{equation*}
Owing to \cite[Theorem 35 and Corollary 29]{GuzSal21}, there exists $\b{w} \in \b{H}^1_0(\oma)$ such that $\nabla {\times} \b{w}= \b{\rho}$ and $\|\nabla \b{w} \|_{\oma} \lesssim \|\nabla {\times }\b{w}\|_{\oma} = \|\b{\rho}\|_{\oma}.$
Combining the above bounds, we obtain
\begin{align}\label{local HD bound 3}
\|\b{\rho}\|_{\oma} \lesssim   \frac{|( \b{\rho},  \nabla {\times} \b{w})_{\oma}|}{\|\nabla \b{w}\|_{\oma}}
&\leq
\underset{{\b{v} \in \b{H}^1_0(\oma)}}{\sup} \!\!\frac{|( \b{\rho},  \nabla {\times} \b{v})_{\oma}|}{\|\nabla \b{v}\|_{\oma}}
 \nonumber \\
&= \|\nabla {\times}  \b{\rho}\|_{\b{H}^{-1}(\oma)}
=  \| \divrw {(\hes \xi - \ut{\b{\Sigma}})}\|_{\b{H}^{-1}(\oma)}
 \leq 2\|\ut{\b{\Sigma}}\|_{\oma}.
\end{align}

(3) Owing to~\eqref{local HD bound 5} and invoking \cite[Corollary 29 \& Theorem 35]{GuzSal21} for each
row of $\ut{\b{\Sigma}} - \sk(\b{\rho}) -\hes \xi$, we infer that there exists
$\ut{\b{\Psi}} \in \ut{\b{H}}^1(\oma)$ so that
\begin{equation*}
\curlrw{\b{\ut{\Psi}}} = \ut{\b{\Sigma}} - \sk(\b{\rho}) -\hes \xi,
\qquad
|\ut{\b{\Psi}} |_{\ut{\b{H}}^1(\omega_{\ba})} \lesssim \|\ut{\b{\Sigma}} - \sk(\b{\rho}) -\hes \xi\|_{\oma}.
\end{equation*}
The above identity is nothing but~\eqref{def:Helmholtz}. Moreover, the above
bound combined with the triangle inequality and the bounds~\eqref{local HD bound 1}
(on $\hes\xi$) and \eqref{local HD bound 3} (on $\b{\rho}$) leads to~\eqref{bound:Helmholtz}. This completes the proof.
\end{proof}
\begin{remark}[General boundary conditions]
The above local Helmholtz decomposition is suitable for the analysis of biharmonic problems with essential boundary conditions. For more general boundary conditions, such as free, simply supported, or mixed boundary conditions, one needs to establish a corresponding local Helmholtz decomposition adapted to these settings. We refer to \cite{BeiraoNiiranenStenberg10} for a Helmholtz decomposition under general boundary conditions on two-dimensional simply connected domains. The extension of the present analysis to more general boundary conditions is left to future work.
\end{remark}

\section{$hp$-a posteriori error analysis}\label{sec:apost}

In this section, we carry out the residual-based $hp$–a posteriori error analysis for the HHO discretization of the biharmonic problem. 

Our goal is to establish an upper bound on the approximation error
\begin{equation}
e:= u - u_{\mesh},
\end{equation}
which measures the difference between the exact solution of~\eqref{weakform}, $u$, and the cellwise component of the HHO solution of~\eqref{discrete_prob}, $u_{\mesh}$.
We will use the following local error indicators: For all $T\in\mesh$,
\begin{subequations} \label{Error indicators} \begin{align}
\eta_{T,{\rm sta}}
:={}&  S_{\dK}(\hat{u}_T,\hat{u}_T)^{\frac12}, \label{eq:def_eta_sta} \\
\eta_{T,{\rm res}}
:={}& \hbar_T^2 \|\Pi^{k-2}_T (f) - \Delta^2  R^{k+2}_T (\widehat{u}_T)\|_{T} + \hbar_T^{\frac12} \|\sjump{\partial_{nt}  R^{k+2}_T (\widehat{u}_T)}\|_{\dKi} \nonumber \\
&+ \hbar_T^{\frac32}\|\sjump{\partial_n \Delta  R^{k+2}_T (\widehat{u}_T)}\|_{\dKi}, \label{eq:def_eta_res}\\
\eta_{T, {\rm tan}}
:={}& \hbar_T^{\frac12} \Big\{ \|\sjump{\partial_{tt} u_{\mesh}}\|_{\dK}
+
\|\sjump{\partial_{nt} u_{\mesh}}\|_{\dK}\Big\}, \label{eq:def_eta_tan}
\end{align}
recalling the convention that $\Pi^{n}_{T}(f)=0$ for all $n\le -1$, and
where jumps of normal derivatives across any mesh interface $F\in\Fint$ are understood so that the normal derivative is taken along the normal vector $\b{n}_F$ orienting $F$ on both sides of $F$. We define the data oscillation term
\begin{equation} \label{est:oscillation}
\mathcal{O}(f)_T
:= \hbar_T^2 \| f - \Pi^{k-2}_{T}(f)\|_{T}.
\end{equation}
\end{subequations}

\subsection{Abstract error bound}

Our first step is to bound the error as the sum of the dual norm of a suitable residual functional on the energy space $H^2_0(\Omega)$ and the nonconforming error measuring the departure of $u_{\mesh}$ from this space.

\begin{lemma}[Abstract error bound]\label{Lemma: abstract error bound}
The following holds:
\begin{equation}\label{eq: abstract error bound}
\|\hes_{\mesh} e\|_{\Omega}^2
 =  \|\mathcal{R}_{\mesh}\|_{H^{-2}(\Omega)}^2
+\min_{w\in H^2_0(\Omega)}\|\hes_{\mesh} (w - u_{\mesh}) \|_{\Omega}^2,
\end{equation}
where $\mathcal{R}_{\mesh}\in H^{-2}(\Omega)$ denotes the residual functional such that
\begin{equation}\label{def: residual functional}
\mathcal{R}_{\mesh}(w) := (\hes_{\mesh} e, \hes w)_{\Omega}
= (f,w)_\Omega - (\hes_{\mesh} u_{\mesh}\hes w)_{\Omega}, \qquad \forall w\in H^2_0(\Omega),
\end{equation}
and its dual norm is defined as
\begin{equation} \label{eq:def_res_dual}
\|\mathcal{R}_{\mesh}\|_{H^{-2}(\Omega)} := \sup_{w\in H^2_0(\Omega)} \frac{\mathcal{R}_{\mesh}(w)}{\|\hes w\|_\Omega}.
\end{equation}
\end{lemma}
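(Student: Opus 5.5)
The identity is the classical Pythagorean splitting obtained from an $H^2$-orthogonal projection. We work in $L:=\ut{\b{L}}^2(\Omega)$, the space of square-integrable tensor fields on $\Omega$. Let $W:=\{\hes w \;|\; w\in H^2_0(\Omega)\}\subset L$.

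The first step is to check that $W$ is closed in $L$. The map $w\mapsto \hes w$ is an isometry from $H^2_0(\Omega)$, equipped with the norm $\|\hes\cdot\|_\Omega$, into $L$. This norm is equivalent to the full $H^2$-norm on $H^2_0(\Omega)$, by the Poincaré-type inequality underlying the well-posedness of~\eqref{weakform}. Hence $W$ is a complete, and therefore closed, subspace of $L$.

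Next, we set $\ut{\b{G}}:=\hes_{\mesh} e=\hes u-\hes_{\mesh}u_{\mesh}\in L$ and decompose it as $\ut{\b{G}}=P_W\ut{\b{G}}+(\ut{\b{G}}-P_W\ut{\b{G}})$, where $P_W$ is the $L$-orthogonal projection onto $W$. Pythagoras gives
\begin{equation*}
\|\ut{\b{G}}\|_\Omega^2=\|P_W\ut{\b{G}}\|_\Omega^2+\|\ut{\b{G}}-P_W\ut{\b{G}}\|_\Omega^2 .
\end{equation*}
It remains to identify the two terms on the right-hand side.

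For the first term, $\|P_W\ut{\b{G}}\|_\Omega=\sup_{\ut{\b{Z}}\in W}(\ut{\b{G}},\ut{\b{Z}})_\Omega/\|\ut{\b{Z}}\|_\Omega$. Writing $\ut{\b{Z}}=\hes w$, this supremum is exactly $\|\mathcal{R}_{\mesh}\|_{H^{-2}(\Omega)}$ as defined in~\eqref{eq:def_res_dual}. The equality of the two expressions for $\mathcal{R}_{\mesh}(w)$ in~\eqref{def: residual functional} follows directly from~\eqref{weakform}: $(\hes u,\hes w)_\Omega=(f,w)_\Omega$.

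For the second term, $\|\ut{\b{G}}-P_W\ut{\b{G}}\|_\Omega=\min_{\ut{\b{Z}}\in W}\|\ut{\b{G}}-\ut{\b{Z}}\|_\Omega$. Taking $\ut{\b{Z}}=\hes(u-w')$ with $w'\in H^2_0(\Omega)$, which ranges over all of $W$ since $u\in H^2_0(\Omega)$, gives $\ut{\b{G}}-\ut{\b{Z}}=\hes w'-\hes_{\mesh}u_{\mesh}$. Renaming $w'$ as $w$ yields the minimum in~\eqref{eq: abstract error bound}. The minimum is attained because $W$ is closed, so the projection exists.

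No step is a real obstacle. The only point requiring care is the closedness of $W$, which justifies writing min rather than inf; it rests on the norm equivalence on $H^2_0(\Omega)$ noted above.
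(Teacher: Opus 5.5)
Your proof is correct and is essentially the paper's approach: the paper simply defers to \cite[Theorem~1]{CarstenGallistlHu13}, whose argument is exactly this Pythagorean splitting of $\hes_{\mesh} e$ via the $L^2$-orthogonal projection onto the closed subspace $\{\hes w \;|\; w\in H^2_0(\Omega)\}$. You have also correctly handled the two points needing care: closedness of that subspace, which justifies the minimum, and the substitution $w' \mapsto u-w'$, which uses $u\in H^2_0(\Omega)$ to identify the distance term.
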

\begin{proof}
The proof can be found in \cite[Thereom 1]{CarstenGallistlHu13}.
\end{proof}

%

\subsection{Bound on dual residual norm}

To bound the dual residual norm, we can invoke any of the two HHO interpolation operators introduced in Section~\ref{sec:interp_HHO}. This is the reason why the bound on the dual residual norm takes the form of a minimum between two terms.

\begin{lemma}[Bound on dual residual norm]\label{lemma: dual norm of residual}
The following holds:
\begin{subequations}
\begin{equation} \label{eq:dual_res1}
\|\mathcal{R}_{\mesh}\|_{H^{-2}(\Omega)}^2 \lesssim \sum_{T\in\mesh} \big\{
(k+2) \eta_{T,{\rm sta}}^2 + \eta_{T,{\rm res}}^2+ \mathcal{O}(f)^2_T\big\}.
\end{equation}
Moreover, under Assumption \ref{assumption 1}, the following holds:
\begin{align}
\|\mathcal{R}_{\mesh}\|_{H^{-2}(\Omega)}^2
\lesssim {}&\!\sum_{T\in\mesh}\! (k+2) \eta_{T,{\rm sta}}^2 
+ \min\bigg\{\!\sum_{T\in\mesh}\!\!
\big\{\eta_{T,{\rm res}}^2+ \mathcal{O}(f)^2_T\big\},
\!\sum_{T\in\mesh}\!\! (k+2) \mathcal{O}(f)^2_T\bigg\}. \label{eq:dual_res2}
\end{align} \end{subequations}
\end{lemma}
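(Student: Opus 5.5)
The plan is to fix $w\in H^2_0(\Omega)$, write $v_h$ for the first component of an HHO interpolant of $w$, and split $\mathcal{R}_{\mesh}(w)=\mathcal{R}_{\mesh}(w-v_h)+\mathcal{R}_{\mesh}(v_h)$. For the interpolant we take $\hat w_h:=\hat{\mathcal{J}}_h^k(w)$ for~\eqref{eq:dual_res1} and, under Assumption~\ref{assumption 1}, either $\hat{\mathcal{J}}_h^k(w)$ or $\hat{\mathcal{I}}_h^k(w)$ for~\eqref{eq:dual_res2}. Both lie in $\Vhkz$, so they can be used in the discrete problem~\eqref{discrete_prob}. This gives
\[
\mathcal{R}_{\mesh}(w)=(f,w)_\Omega-(\hes_{\mesh}u_{\mesh},\hes w)_\Omega
\]
and, for the discrete part, $(f,w_{\mesh})_\Omega=\sum_T\{(\hes R_T^{k+2}(\hat u_T),\hes R_T^{k+2}(\hat w_T))_T+S_{\dK}(\hat u_T,\hat w_T)\}$. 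The stabilization part is handled by Cauchy--Schwarz together with Lemma~\ref{lem:bound_stab}, which gives $\sum_T \eta_{T,{\rm sta}}(k+2)^{1/2}\|\hes w\|_T$; this produces the term $(k+2)\eta_{T,{\rm sta}}^2$.

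We also need to replace $\hes_{\mesh}u_{\mesh}$ by $\hes R_T^{k+2}(\hat u_T)$. Lemma~\ref{prop} shows that the difference is bounded by $\eta_{T,{\rm sta}}$, and pairing it with $\hes w$ costs only $\eta_{T,{\rm sta}}\|\hes w\|_T$.

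\emph{Route via $\hat{\mathcal{I}}_h^k$.} Lemma~\ref{eq:bound:elliptic-ptojection} gives $(\hes R_T^{k+2}(\hat{\mathcal{I}}_T^k w),\hes R_T^{k+2}(\hat u_T))_T=(\hes w,\hes R_T^{k+2}(\hat u_T))_T$. Hence, after using the discrete equation, all consistency terms cancel exactly, and what remains is
\[
\mathcal{R}_{\mesh}(w)=(f,w-\cC_h^{k+2}w)_\Omega-\sum_T S_{\dK}(\hat u_T,\hat{\mathcal{I}}_T^k w)+\text{(Lemma~\ref{prop} term)}.
\]
By the moment conditions~\eqref{cell property}, $w-\cC_T^{k+2}w$ is $L^2(T)$-orthogonal to $\mathbb{P}^{k-2}(T)$, so $(f,w-\cC w)_T=(f-\Pi_T^{k-2}f,w-\cC w)_T$. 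What is needed is an $L^2$ estimate $\|w-\cC_T^{k+2}w\|_T\lesssim (k+2)^{1/2}\hbar_T^2\|\hes w\|_T$. I would try to derive it from Assumption~\ref{assumption 1} via a Poincaré/trace argument on $T$, possibly with one extra factor of $(k+2)^{1/2}$. Then Cauchy--Schwarz gives $\sum_T(k+2)\mathcal{O}(f)_T^2$. This is the step I expect to be the main obstacle, namely getting the $p$-power right from the assumption alone; if only a face estimate is available, I would use the interior degrees of freedom and a scaling argument.

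\emph{Route via $\hat{\mathcal{J}}_h^k$.} Here no exact cancellation is available. Instead, we write $(\hes R_T^{k+2}\hat u_T,\hes R_T^{k+2}\hat w_T)_T$ using~\eqref{Def: Reconstruction} with test function $R_T^{k+2}\hat u_T$; its trace and normal-derivative data come from $\mBS w$ and $\Pi^k_F(\partial_n w)$. We then subtract $(\hes R_T^{k+2}\hat u_T,\hes w)_T$, integrated by parts elementwise with~\eqref{eq:IPP_bis}.

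In the difference, $\Pi^k_F$ can be removed because $\partial_{nn}R_T^{k+2}\hat u_T\in\mathbb{P}^k(F)$. The face terms then regroup, using single-valuedness of the traces of $w$, $\partial_n w$ and $\mBS w$ and the vanishing of these on $\partial\Omega$. They reduce to interface jumps $\sjump{\partial_n\Delta R}$ against $w-\mBS w$ and $\sjump{\partial_{nt}R}$ against $\partial_t(w-\mBS w)$, plus the bulk term $(\Pi^{k-2}_Tf-\Delta^2R,\,w-\mBS w)_T$ and the oscillation term.

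For the tangential jump, the factor $\partial_t(w-\mBS w)$ has to be estimated on faces. I would use the face-gradient bound from Corollary~\ref{cor: global interpolation of BS}; the normal-derivative part is stated explicitly there and the tangential part follows analogously. The corollary's weights $(p/h_T)^{2},(p/h_T)^{3/2},(p/h_T)^{1/2}$ with $p=k+2$ then match the weights $\hbar_T^2,\hbar_T^{3/2},\hbar_T^{1/2}$ in $\eta_{T,{\rm res}}$ and in $\mathcal{O}(f)_T$.

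Summing with Cauchy--Schwarz and dividing by $\|\hes w\|_\Omega$ yields~\eqref{eq:dual_res1}. Taking the minimum over the two routes yields~\eqref{eq:dual_res2}.
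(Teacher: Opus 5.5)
Your $\hat{\mathcal{J}}_h^k$ route is the paper's own argument: the reconstruction identity with $R_T^{k+2}(\hat u_T)$ as test function, elementwise integration by parts, removal of $\Pi^k_F$ because $\partial_{nn}R_T^{k+2}(\hat u_T)\in\mathbb{P}^k(F)$, and Corollary~\ref{cor: global interpolation of BS}. It therefore correctly gives \eqref{eq:dual_res1} and the first entry of the minimum in \eqref{eq:dual_res2}. Your opening split $\mathcal{R}_{\mesh}(w-v_h)+\mathcal{R}_{\mesh}(v_h)$ is not meaningful since $v_h\notin H^2_0(\Omega)$, but the computation you actually perform, subtracting the discrete problem tested with $\hat w_h$, is the right one.

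The gap is in the $\hat{\mathcal{I}}_h^k$ route. The estimate $\|w-\cC_T^{k+2}(w)\|_T\lesssim (k+2)^{1/2}\hbar_T^2\|\hes w\|_T$ is exactly what produces $\sum_T(k+2)\mathcal{O}(f)_T^2$, and you leave it unproved. The tools you propose do not deliver it. Assumption~\ref{assumption 1} only controls $\nabla(w-\cC_T^{k+2}w)$ on $\dK$. Passing from face data to an $L^2(T)$ bound via Poincar\'e/trace inequalities would still need a bulk bound on $w-\cC_T^{k+2}w$ or its gradient with the correct $p$-scaling, and nothing in your plan provides one. A scaling argument cannot track the dependence on $k$ at all.

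The missing idea is to use the $L^2(T)$-orthogonality of $\phi:=w-\cC_T^{k+2}(w)$ to $\mathbb{P}^{k-2}(T)$ a second time, now on $\phi$ itself. Since $\|\phi\|_T^2=(\phi,\phi-q)_T$ for every $q\in\mathbb{P}^{k-2}(T)$, for $k\ge3$ an $hp$-polynomial approximation of degree $k-2$ gives $\|\phi\|_T\lesssim (h_T/(k-2))^2\|\hes\phi\|_T$. The Hessian of $\phi$ is then controlled through the HHO reconstruction, not directly:
\begin{itemize}
\item $\|\hes(w-R_T^{k+2}(\hat{\mathcal{I}}_T^k w))\|_T\le\|\hes w\|_T$ by the $H^2$-elliptic projection property of Lemma~\ref{eq:bound:elliptic-ptojection};
\item $\|\hes(R_T^{k+2}(\hat{\mathcal{I}}_T^k w)-\cC_T^{k+2}w)\|_T\lesssim S_{\dK}(\hat{\mathcal{I}}_T^k w,\hat{\mathcal{I}}_T^k w)^{1/2}\lesssim (k+2)^{1/2}\|\hes w\|_T$ by Lemma~\ref{prop} and Lemma~\ref{lem:bound_stab}(i).
\end{itemize}
This is the only place where Assumption~\ref{assumption 1} enters, through the stabilization bound, and it is the source of the factor $(k+2)^{1/2}$. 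Together with $k+2\lesssim k-2$ for $k\ge3$, this yields the desired estimate.

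The finitely many cases $k\le2$ need a separate argument. For instance, the vertex degrees of freedom give $\mathcal{L}^1_T(\phi)=0$ for the linear Lagrange interpolant $\mathcal{L}^1_T$. Hence $\|\phi\|_T\lesssim h_T^2\|\hes\phi\|_T\lesssim h_T^2\|\hes w\|_T$, using the $H^2$-stability of $\cC_T^{k+2}$ at fixed degree.
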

\begin{proof}
The proof is postponed to Section \ref{sec: proof dual norm of residual}.
\end{proof}

\subsection{Bound on nonconforming error}

To bound the nonconforming error, it suffices to pick any function in $H^2_0(\Omega)$. In this section, we show how to reconstruct from $u_{\mesh}$ such a function using a $C^1$-partition of unity and local solves on vertex patches. To this purpose, we use the $C^1$-composite finite element with polynomial order five on the Alfeld split of the mesh $\mesh$ introduced in \cite{Walk14}. Recall that the Alfeld split consists in subdividing each mesh cell $T\in\mesh$ into $(d+1)=4$ sub-simplices by connecting each vertex of $T$ to its centroid. We denote a generic sub-cell obtained in this manner by $\tilde{T}$, and the global resulting sub-mesh by $\Amesh$. On each macro-cell $T$, the composite element consists of piecewise quintic polynomials defined on the Alfeld sub-simplices $\tilde{T}$ and that are globally $\mathcal{C}^1$-continuous over $T$. The associated degrees of freedom include, in particular, the values at the vertices of $T$ and at its centroid $\bc_T$. The remaining degrees of freedom correspond to derivatives and are not used in the present construction. In what follows, we denote by $\{\psi_{\ba}\}_{\ba\in\vertice}$ the global basis functions associated with the vertex-based  degrees of freedom of the original mesh $\mesh$, and by $\{\psi_T\}_{T\in\mesh}$ the global basis functions associated with the cell-based degrees of freedom of the original mesh. The support of each vertex-based basis function $\psi_{\ba}$ is the corresponding vertex patch $\oma$, whereas the support of each cell-based basis function $\psi_T$ is $T$. A key property of the construction in \cite{Walk14} is the following partition-of-unity identity:
\begin{equation}\label{eq:pou}
\sum_{\ba\in\mathcal{V}_h}\psi_{\ba}
+\sum_{T\in\mesh}\psi_T
\equiv 1
\qquad\text{on }\Omega.
\end{equation}
Indeed, the other basis functions do not appear in \eqref{eq:pou}, since their associated degrees of freedom involve normal derivatives, which vanish when applied to the constant function $1$.


\begin{definition}[Patchwise $H^2$-potential reconstruction]\label{def:H2-potential reconstruction}
Let $u_{\mesh}\in \mathbb{P}^{k+2}(\mesh)$.
For all $ \ba \in \mathcal{V}_h $, let $ u_c^{\ba} \in H^2_{0}(\oma) $ be the solution to the following well-posed problem:
\begin{align}\label{def: vertex reconstruction}
(\hes u_c^{\ba}, \hes v)_{\oma} = (\hes_{\mesh} (\psi_{\ba} u_{\mesh}), \hes v)_{\oma} \qquad \forall v \in H^2_{0}(\oma).
\end{align}
Equivalently,
\begin{align}
u_c^{\ba} &:= \argmin_{\rho_{\ba} \in H^2_{0}(\oma)} \| \hes \rho_{\ba} - \hes_{\mesh} (\psi_{\ba} u_{\mesh}) \|_{\oma}. \label{eq:local_min}
\end{align}
Then, extending $u_c^{\ba}$ by zero to $\Omega$,  we set
\begin{equation}\label{uc:partition}
u_c := \sum_{\ba \in \mathcal{V}_h} u_c^{\ba} + \sum_{T \in \mesh} \psi_{T} u_T \in H^2_0(\Omega).
\end{equation}
\end{definition}

\begin{lemma}[Bound on nonconforming error]\label{lemma: nonconforming error total}
The following holds:
\begin{align}\label{full bound for nonconforming error}
\|\hes_{\mesh} (u_c - u_{\mesh})\|^2_{\Omega} &\lesssim \sum_{T \in \mesh} \big\{\eta_{T,{\rm sta}}^2
+ \eta_{T, {\rm tan}}^2 \big\}.
\end{align}
\end{lemma}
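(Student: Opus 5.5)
Using the partition of unity \eqref{eq:pou}, we write $u_{\mesh}=\sum_{\ba}\psi_{\ba}u_{\mesh}+\sum_T\psi_T u_{\mesh}$. Since $\psi_T u_T\in H^2_0(\Omega)$ (supported in $T$, $C^1$), the cell terms in \eqref{uc:partition} cancel exactly, and
\begin{equation*}
\hes_{\mesh}(u_c-u_{\mesh})=\sum_{\ba\in\vertice}\big(\hes u_c^{\ba}-\hes_{\mesh}(\psi_{\ba}u_{\mesh})\big).
\end{equation*}
Each summand is supported in $\oma$, and each cell belongs to at most $d+1$ patches. Hence it suffices to show, for every $\ba$,
\begin{equation*}
\|\hes u_c^{\ba}-\hes_{\mesh}(\psi_{\ba}u_{\mesh})\|_{\oma}^2\lesssim\sum_{T\in\mesha}\big\{\eta_{T,{\rm sta}}^2+\eta_{T,{\rm tan}}^2\big\}.
\end{equation*}

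We characterize the local error by duality. Set $\ut{\b{\Sigma}}:=\hes u_c^{\ba}-\hes_{\mesh}(\psi_{\ba}u_{\mesh})$. Owing to \eqref{def: vertex reconstruction}, $\ut{\b{\Sigma}}$ is $L^2(\oma)$-orthogonal to $\hes H^2_0(\oma)$. Applying Lemma~\ref{lemma:Helmholtz-dec} to $\ut{\b{\Sigma}}$ gives $\ut{\b{\Sigma}}=\hes\xi+\sk(\b{\rho})+\curlrw\ut{\b{\Psi}}$, and therefore
\begin{equation*}
\|\ut{\b{\Sigma}}\|_{\oma}^2=(\ut{\b{\Sigma}},\sk(\b{\rho})+\curlrw\ut{\b{\Psi}})_{\oma}.
\end{equation*}
The contribution of $\hes u_c^{\ba}$ vanishes: it is symmetric, so it is orthogonal to $\sk(\b{\rho})$. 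Moreover, $(\hes v,\curlrw\ut{\b{\Psi}})_{\oma}=0$ for $v\in H^2_0(\oma)$ after integrating by parts, because $\curlrw\nabla v=0$ and $\nabla v$ has vanishing trace. Likewise, $\hes_{\mesh}(\psi_{\ba}u_{\mesh})$ is cellwise symmetric, so the $\sk(\b{\rho})$ term disappears. We are left with
\begin{equation*}
\|\ut{\b{\Sigma}}\|_{\oma}^2=-\sum_{T\in\mesha}(\hes(\psi_{\ba}u_T),\curlrw\ut{\b{\Psi}})_T.
\end{equation*}
Integrating by parts cellwise, with $\curlrw\hes=0$, yields face terms of the form $(\sjump{\nabla(\psi_{\ba}u_{\mesh})}\times\b{n}_F,\ut{\b{\Psi}})_F$, i.e.\ jumps of the tangential derivative of $\nabla(\psi_{\ba}u_{\mesh})$ on interior faces, and tangential traces on $\partial\oma$. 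On $\partial\oma$ these contributions vanish, except on $\partial\Omega$, because $\psi_{\ba}$ and $\nabla\psi_{\ba}$ vanish on $\partial\oma\setminus\partial\Omega$. Since $\psi_{\ba}$ is $C^1$, the jumps reduce to $\psi_{\ba}\sjump{\partial_t\nabla u_{\mesh}}+\nabla\psi_{\ba}\otimes\sjump{\partial_t u_{\mesh}}$-type terms. We may subtract from $\ut{\b{\Psi}}$ its mean on $\oma$; the constant part integrates to zero against the tangential curl of a $C^0$-trace of a globally defined field on the closed surface pieces. Rather than relying on this delicately, a cleaner route is a Poincaré inequality for $\ut{\b{\Psi}}-\overline{\ut{\b{\Psi}}}$ combined with the observation that the constant contributes $\sum_T(\hes(\psi_{\ba}u_T),\curlrw \text{const})=0$.

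To bound the face terms, we use the trace inequality $\|\ut{\b{\Psi}}-\overline{\ut{\b{\Psi}}}\|_F\lesssim h_F^{1/2}|\ut{\b{\Psi}}|_{\ut{\b{H}}^1(\oma)}$ (a continuous trace plus Poincaré on the patch, with constants depending only on shape regularity). We also use $\|\psi_{\ba}\|_\infty\lesssim1$ and $\|\nabla\psi_{\ba}\|_\infty\lesssim h^{-1}$. This gives
\begin{equation*}
\|\ut{\b{\Sigma}}\|_{\oma}\lesssim\sum_{F}\Big(h_F^{1/2}\|\sjump{\partial_t\nabla u_{\mesh}}\|_F+h_F^{-1/2}\|\sjump{\partial_t u_{\mesh}}\|_F\Big).
\end{equation*}
The first term consists of $\sjump{\partial_{tt}u_{\mesh}}$ and $\sjump{\partial_{nt}u_{\mesh}}$, which are bounded by $\eta_{T,{\rm tan}}$ up to the $hp$-scaling (where $k$-powers must be tracked; $h_F^{1/2}\ge\hbar_T^{1/2}$ loses $(k+2)^{1/2}$, which we will examine). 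The second term is bounded by inserting the face unknown $u_F$, which is single-valued, so that $\sjump{\partial_t u_{\mesh}}=\partial_t(u_{T_1}-u_F)-\partial_t(u_{T_2}-u_F)$. A discrete trace and inverse estimate in the face, $\|\partial_t(u_T-u_F)\|_F\lesssim (k+2)^2h_F^{-1}\|u_T-u_F\|_F$, then bounds it by $\hbar_T^{-3/2}\|u_T-u_F\|_F\le\eta_{T,{\rm sta}}$ up to $k$-factors.

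The main obstacle is the exact $p$-bookkeeping. To make it robust we will instead apply the inverse estimate with the $hp$-scaling $\hbar_T$ directly, and we must handle the mean-value subtraction of $\ut{\b{\Psi}}$ uniformly on boundary patches, where Poincaré must be replaced by using the $H^1$-seminorm bound from Lemma~\ref{lemma:Helmholtz-dec} with patch-uniform constants. We also need to confirm that normal-derivative jumps never enter, since only tangential curls of $\nabla(\psi_{\ba}u_{\mesh})$ appear.
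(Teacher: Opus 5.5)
There is a genuine gap, and it is exactly the $p$-bookkeeping you flag. You bound the face terms by testing them against $\ut{\b{\Psi}}-\overline{\ut{\b{\Psi}}}$ and using a continuous trace inequality, and that can only produce the weight $h_F^{1/2}$. For example, the contribution $(\psi_{\ba}\,\b{n}_F\times\sjump{\nabla\partial_j u_{\mesh}},\b{\Psi}_j)_F$, with $\b{\Psi}_j$ the $j$-th row of $\ut{\b{\Psi}}$, is bounded by $h_F^{1/2}\|\sjump{\partial_t\nabla u_{\mesh}}\|_F\,|\ut{\b{\Psi}}|_{\ut{\b{H}}^1(\oma)}$. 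That is $(k+2)^{1/2}$ times what $\eta_{T,{\rm tan}}$ controls. Your argument therefore yields at best $\|\hes_{\mesh}(u_c-u_{\mesh})\|_\Omega^2\lesssim\sum_{T\in\mesh}\{\eta_{T,{\rm sta}}^2+(k+2)\eta_{T,{\rm tan}}^2\}$, not the $k$-robust statement.

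The fix you announce, an inverse estimate with $\hbar_T$, is unavailable: $\ut{\b{\Psi}}$ lies only in $\ut{\b{H}}^1(\oma)$ and is not a polynomial. No face-by-face improvement is possible either, since a constant jump on $F$ paired with a linear $\ut{\b{\Psi}}$ already costs $h_F^{1/2}$ regardless of $k$. The high-order weights of the stabilization must therefore be exploited.

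The missing idea is the paper's splitting $\ut{\b{\Psi}}=(\ut{\b{\Psi}}-\mKM(\ut{\b{\Psi}}))+\mKM(\ut{\b{\Psi}})$. Here $\mKM$ is an $hp$-quasi-interpolant onto $\mathbb{P}^{k+1}(\oma)\cap H^1(\oma)$ satisfying \eqref{eq: Modified KM approximation}.
\begin{itemize}
\item For the first part, you integrate the curl by parts as you did. Now $\hbar_F^{-1/2}\|\ut{\b{\Psi}}-\mKM(\ut{\b{\Psi}})\|_F\lesssim|\ut{\b{\Psi}}|_{\ut{\b{H}}^1(\oma)}$ gives the correct weight $\hbar_F^{1/2}$.
\item For the polynomial part, one integrates the gradient by parts instead. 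Only the first-order jumps $\sjump{\nabla(\psi_{\ba}u_{\mesh})}$ then appear, tested against the polynomial $\curlrw{\mKM(\ut{\b{\Psi}})}\b{n}_F$.
\item The pieces $\nabla\psi_{\ba}\sjump{u_{\mesh}}$ and $\Pi^k_F\sjump{\nabla u_{\mesh}}$ are absorbed by the stabilization. Its weights $(k+2)^3\hbar_T^{-3}$ and $(k+2)\hbar_T^{-1}$ pay for the discrete trace inequality.
\item For $(I-\Pi^k_F)\sjump{\nabla u_{\mesh}}$, one replaces the test function by $(I-\Pi^k_{\tilde T})(\psi_{\ba}\curlrw{\mKM(\ut{\b{\Psi}})}\b{n}_F)$, a polynomial of degree $k+5$. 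The sharp trace inequality \eqref{eq: discrete trace sharp_estimate} with $p-n=5$ then gives $\hbar_F^{-1/2}$ instead of $(k+2)^{1/2}\hbar_F^{-1/2}$.
\end{itemize}

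There is also a secondary error: normal-derivative jumps do enter, contrary to your last sentence. The product rule produces $\sjump{\partial_j u_{\mesh}}\,\b{n}_F\times\nabla\psi_{\ba}$ with $j$ ranging over all directions. This contains $\sjump{\partial_n u_{\mesh}}$, is absent from your displayed bound, and is not controlled by inserting $u_F$. It must be split into two pieces. The piece $\Pi^k_F\sjump{\partial_n u_{\mesh}}$ is handled by inserting $\gamma_F$ and using the second part of $S_{\dK}$. The piece $(I-\Pi^k_F)\sjump{\partial_n u_{\mesh}}$ is bounded by $\hbar_F\|\sjump{\partial_{nt}u_{\mesh}}\|_F$.
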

\begin{proof}
The proof is postponed to Section \ref{sec: proof full bound}.
\end{proof}


\begin{remark}[Inhomogeneous boundary condition]\label{remark: Inhomogeneous BC}
The above $H^2$-reconstruction can be modified to account for inhomogeneous boundary conditions of the form $u = g_{\rm D}$ and  $\partial_{n} u = g_{\rm N}$ on $\partial \Omega$.
For every mesh vertex $\ba \in \mathcal{V}_h $, we then solve for $u_c^{\ba} \in H^2_{g} (\oma):=\{v\in H^2(\oma) \;|\; v|_{\partial \oma \cap \partial \Omega} = \psi_{\ba}g_{\rm D},\;  \partial_{n} v|_{\partial \oma \cap \partial \Omega} = \psi_{\ba}g_{\rm N}, \; v|_{\partial \oma \cap \Omega} = \partial_{n} v|_{\partial \oma \cap  \Omega} = 0 \}$ such that
$$
(\hes u_c^{\ba}, \hes v)_{\oma} = (\hes_{\mesh} (\psi_{\ba} u_{\mesh}), \hes v)_{\oma} \qquad \forall v \in H^2_{0}(\oma).
$$
Then, extending $u_c^{\ba}$ by zero to $\Omega$, $u_c \in H^2(\Omega)$ is still defined as in~\eqref{uc:partition}, and the partition-of-unity property~\eqref{eq:pou} implies that $u_c$ also satisfies the inhomogeneous boundary conditions. Finally, the upper bound from Lemma~\ref{lemma: nonconforming error total} still holds provided the tangential jumps defining $\eta_{T, {\rm tan}}$ are redefined as follows: For all $F\in\Fb$,
$$
\sjump{\partial_{tt} u_{\mesh}}|_F :=
\partial_{tt} \big(u_{\mesh}|_F  - E_F(g_{\rm D}|_F)\big),
\qquad
\sjump{\partial_{nt} u_{\mesh}}|_F :=
\partial_{t} \big((\partial_n u_{\mesh})|_F  - E_F(g_{\rm N}|_F)\big),
$$
where, for a function $\varphi$ defined on $F$, $E_F(\varphi)(\b{x}):=\varphi((\utI-\b{n}_F\otimes\b{n}_F)(\b{x}-\b{x}_F))$ and $\b{x}_F$ is the centroid of $F$.
\end{remark}

\subsection{Main result}
We are now ready to state our main result.

\begin{theorem}[$hp$-upper error bounds]\label{thm:upperbound}
The following holds:
\begin{subequations}
\begin{equation}
\su \!\! \big\{\|\hes_{\mesh} e\|_{T}^2 + S_{\dK}(\hat{u}_T,\hat{u}_T) \big\}
\lesssim \su \big\{ \eta_{T,\rm{tan}}^2 + (k+2)\eta_{T,{\rm sta}}^2 + \eta_{T,{\rm res}}^2+ \mathcal{O}^2_T(f)\big\}.
\end{equation}
Moreover, under Assumption \ref{assumption 1}, the following holds:
\begin{align}\label{upper error bound}
\su \!\! \big\{\|\hes_{\mesh} e\|_{T}^2 + S_{\dK}(\hat{u}_T,\hat{u}_T) \big\}
\lesssim {}& \sum_{T\in\mesh}\big\{ \eta_{T,\rm{tan}}^2
+ (k+2) \eta_{T,{\rm sta}}^2  \big\} \nonumber \\
& \!\!+ \min\bigg\{ \sum_{T\in\mesh}
\big\{ \eta_{T,{\rm res}}^2+ \mathcal{O}(f)^2_T\big\},
\sum_{T\in\mesh} (k+2) \mathcal{O}(f)^2_T \bigg\}.
\end{align}
\end{subequations}
\end{theorem}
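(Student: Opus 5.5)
The proof assembles the three bounds already established in this section. The left-hand side has two pieces: the broken energy error $\sum_{T}\|\hes_{\mesh} e\|_T^2=\|\hes_{\mesh} e\|_\Omega^2$ and the stabilization $\sum_T S_{\dK}(\hat u_T,\hat u_T)$. The second piece is trivial, since by definition~\eqref{eq:def_eta_sta} it equals $\sum_T \eta_{T,{\rm sta}}^2\le \sum_T (k+2)\eta_{T,{\rm sta}}^2$ because $k+2\ge 1$. So all the work lies in bounding $\|\hes_{\mesh} e\|_\Omega^2$.

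For this term, I would start from the identity of Lemma~\ref{Lemma: abstract error bound}:
\begin{equation*}
\|\hes_{\mesh} e\|_\Omega^2=\|\mathcal{R}_{\mesh}\|_{H^{-2}(\Omega)}^2+\min_{w\in H^2_0(\Omega)}\|\hes_{\mesh}(w-u_{\mesh})\|_\Omega^2 .
\end{equation*}
In the minimum, I take $w:=u_c$, the patchwise $H^2$-potential reconstruction of Definition~\ref{def:H2-potential reconstruction}. This is admissible because $u_c\in H^2_0(\Omega)$. Lemma~\ref{lemma: nonconforming error total} then bounds the nonconforming part by $\sum_T\{\eta_{T,{\rm sta}}^2+\eta_{T,{\rm tan}}^2\}$.

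For the dual residual norm, the first estimate uses~\eqref{eq:dual_res1}, which gives $\sum_T\{(k+2)\eta_{T,{\rm sta}}^2+\eta_{T,{\rm res}}^2+\mathcal{O}(f)_T^2\}$ without any assumption. The second estimate, under Assumption~\ref{assumption 1}, uses~\eqref{eq:dual_res2} instead, which contributes the minimum term.

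Summing the contributions and absorbing $\eta_{T,{\rm sta}}^2$ into $(k+2)\eta_{T,{\rm sta}}^2$ yields both claimed bounds. The generic constant is unaffected, since it only involves the constants of Lemmas~\ref{lemma: dual norm of residual} and~\ref{lemma: nonconforming error total}.

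There is no genuine obstacle at this level: the substantive difficulties, namely the dual residual bound and the nonconforming bound with shape-regularity-only constants, are delegated to Lemmas~\ref{lemma: dual norm of residual} and~\ref{lemma: nonconforming error total}, whose proofs are given in Section~\ref{sec:proofs}. The one point to check is that the minimum in~\eqref{eq:dual_res2} passes through the sum unchanged. This is immediate, because the other terms are added outside the minimum and $a+\min\{b,c\}=\min\{a+b,a+c\}$.
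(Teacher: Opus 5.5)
Your proposal is correct and matches the paper's proof: the paper likewise combines the abstract error identity (with $w:=u_c$ in the minimum), the two dual residual bounds (unconditional and under Assumption~1), and the nonconforming bound. It also bounds the stabilization term on the left-hand side trivially by $\eta_{T,{\rm sta}}^2$.
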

\begin{proof}
Combine Lemmas~\ref{Lemma: abstract error bound}, \ref{lemma: dual norm of residual},
and \ref{lemma: nonconforming error total} to bound the broken Hessian of the error, whereas the stabilization term on the left-hand side is trivially bounded by $\eta_{T,{\rm sta}}^2$ for all $T\in\mesh$.
\end{proof}

\begin{remark}[$d=2$] \label{rem:2D}
The a posteriori error analysis can also be performed for the two-dimensional variant of the HHO method mentioned in Remark~\ref{rem:2d_hho}. The main difference lies in the definition of the two HHO reduction operators (see~\eqref{def: global HHO interpolation CH} and \eqref{def: global HHO interpolation BS}) which are now taken to be $\cC_h^{k+1}$ and $\mBS^{k+1}$. This choice is consistent with the polynomial degrees of the HHO(A) variant, where the cell unknown belongs to $\mathbb P^{k+2}(T)$, whereas the face unknown belongs to $\mathbb P^{k+1}(\mathcal F_{T})$. Hence, the interpolation operators have to be defined with degree $(k+1)$ in order to remain compatible with the degree of the face unknowns.
Since $\cC_h^{k+1}$ delivers $L^2$-orthogonality properties to $\mathbb{P}_{k-2}(T)$ for $d=2$, we infer that the data oscillation term $\mathcal{O}(f)^2_T$ in \eqref{est:oscillation} still involves $\| f - \Pi^{k-2}_{T}(f)\|_{T}$ as for $d=3$.
\end{remark}

\section{Numerical examples} \label{sec:Numerical example}

In this section, we present numerical examples to illustrate our theoretical results.
In Section~\ref{valid:example1}, we verify Assumption \ref{assumption 1} in two and three dimensions. In Section~\ref{sec:num_smooth}, we report convergence rates for a three-dimensional smooth solution, so as to verify our main result, Theorem~\ref{thm:upperbound}. In Section~\ref{sec:num_holes}, we verify the claim on the robustness of the estimate with respect to the topology of the domain by considering a series of 2D computational domains with increasing number of holes. Finally, in Sections~\ref{sec:num_adaptive_2D} and~\ref{sec:num_adaptive_3D}, we consider an $h$-adaptive algorithm (with fixed polynomial degree) to approximate a singular solution in 2D and 3D, respectively. The development of a fully $hp$-adaptive algorithm goes beyond the present scope.

\subsection{Verification of Assumption \ref{assumption 1}}\label{valid:example1}

We verify Assumption~\ref{assumption 1} in two and three dimensions. To this end, it is sufficient to perform the verification on a reference simplex as mapping from the reference simplex to any simplex of the mesh by using the pullback by the affine geometric transformation will bring the scaling by the mesh size and the dependency of the constant on the mesh shape-regularity, but this is independent of the underlying polynomial degree. In our calculations, we consider the reference triangle $T$ with vertices $(-0.5,0)$, $(0.5,0)$, and $(-0.5,1)$ in two dimensions, and the reference tetrahedron $T$ with vertices $(0,0,-0.5)$, $(1,0,-0.5)$, $(0,1,-0.5)$, and $(0,0,0.5)$ in three dimensions. We consider the following function, prescribed in polar (resp., spherical) coordinates:
\begin{equation}
v = r^{\alpha}, \qquad \alpha > 2-\frac{d}{2}.
\end{equation}
We notice  $u \in H^{\alpha+\frac{d}{2}-\varepsilon}(T)$ for any arbitrarily small $\varepsilon > 0$.
The expected convergence rate with respect to the polynomial degree $(k+2)$ is
\begin{align}
\| \nabla (v- \cC_T^{k+2} (v) ) \|_{\partial T}
\leq C (k+2)^{-(\alpha +\frac{d-3}{2}-\epsilon)} |v|_{H^{\alpha+\frac{d}{2} -\epsilon}(T)}.
\end{align}
(Since the singularity at the origin does not lie on a vertex of the domain, the doubling of the convergence order with respect to the polynomial degree is not expected; see \cite{BabusakGuo2000}.)

Figure~\ref{Ex1:p-refienement} reports the interpolation error $\|\nabla (v- \cC_T^{k+2} (v) ) \|_{\partial T}$ for $d=2$, $k \in \{0,\ldots,22\}$, and $\alpha \in\{ 1.01, 1.51\}$ (left panel), and for $d=3$, $k \in\{0,\ldots,13\}$, and $\alpha \in\{1.01, 1.51\}$ (right panel). To achieve the needed accuracy, a composite quadrature rule is employed to evaluate the integrals involving the singular functions.
We observe that the interpolation error converges at the optimal rate $(k+2)^{-(\alpha +\frac{d-3}{2})}$ in all cases. This behavior is consistent with Assumption~\ref{assumption 1}.

\begin{figure}[!tb]
\begin{center}
\includegraphics[width=0.46\linewidth]{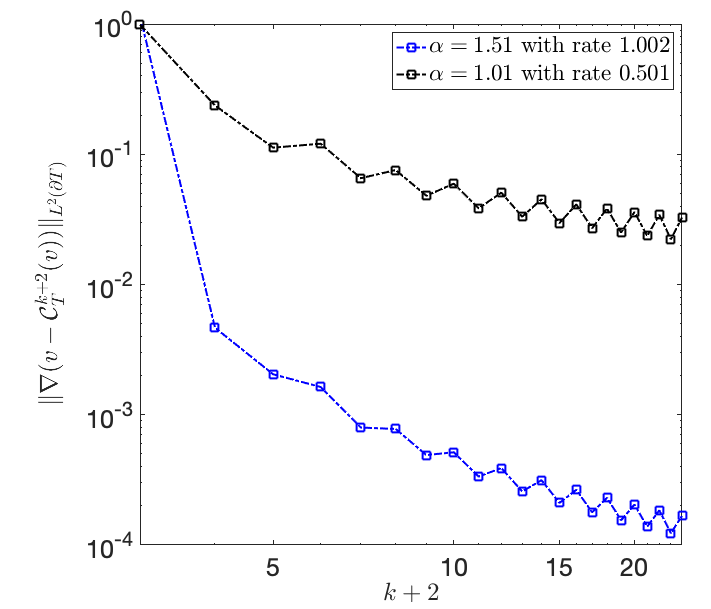}
\includegraphics[width=0.46\linewidth]{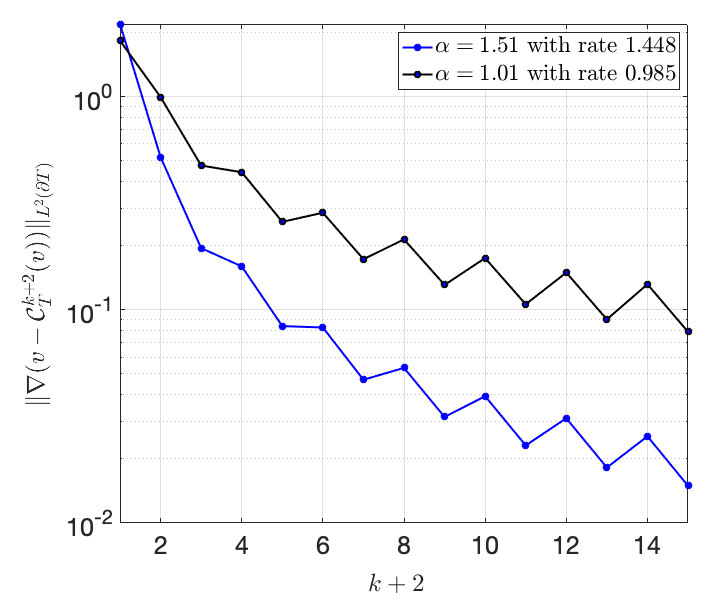}
\end{center}
\caption{Convergence of the interpolation error $\| \nabla(v- \cC_T^{k+2} (v) ) \|_{\partial T}$ as a function of the polynomial degree $(k+2)$. Left panel: $d=2$, $k \in \{0,\ldots,22\}$, and $\alpha \in\{ 1.01, 1.51\}$. Right panel: $d=3$, $k \in\{0,\ldots,13\}$, and $\alpha \in\{1.01, 1.51\}$.}\label{Ex1:p-refienement}
\end{figure}

\subsection{Example 1: Convergence rates for smooth solution in 3D}
\label{sec:num_smooth}

\begin{figure}[!tb]
\begin{center}
\begin{tabular}{cc}
\includegraphics[width=0.46\linewidth]{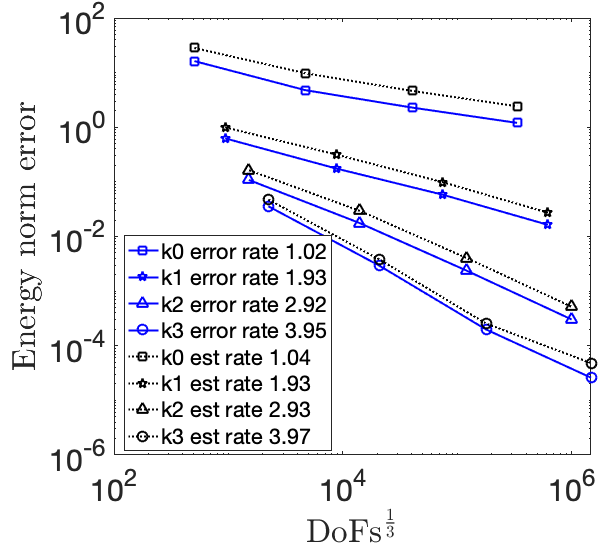} &
\includegraphics[width=0.46\linewidth]{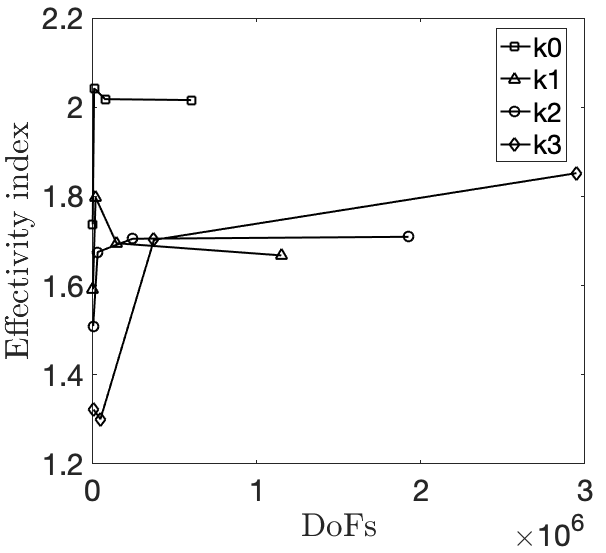}
\end{tabular}
\end{center}
\caption{Example 1: Energy error and a posteriori error estimator for $k\in\{0,1,2,3\}$ as a function of DoFs (left panel) and effectivity index as a function of DoFs (right panel).}\label{Ex2:error_uniform_refinement}
\end{figure}
In this example, we select $f$ and the boundary conditions on the unit cube $\Omega := (0,1)^3$, so that the exact solution is
\begin{equation}
u(x,y,z) := \sin(\pi x) \sin(\pi y) \sin(\pi z) x(1-x)y(1-y)z(1-z).
\end{equation}
We employ the polynomial degrees $k\in\{0,\ldots,3\}$ and a sequence of successively refined tetrahedral meshes consisting of $48$, $384$, $3072$, and $24576$ cells.

Let us first verify the convergence rates of the HHO method for $k\in\{0,\dots,3\}$.
Figure~\ref{Ex2:error_uniform_refinement} reports the energy error corresponding to the left-hand side of~\eqref{upper error bound} (left panel) and the \emph{a posteriori} error estimator corresponding to the right-hand side of~\eqref{upper error bound} (right panel). The rates are computed as a function of DoFs, which denotes the total number of globally coupled discrete unknowns (that is, the total number of face unknowns except those located on the boundary faces). We observe that the energy error and the a posteriori estimator both converge at the optimal rate $\mathcal{O}(\textup{DoFs}^{-\frac{(k+1)}{3}})$. Moreover, the right panel of Figure~\ref{Ex2:error_uniform_refinement} reports the effectivity index, defined as the ratio of the a posteriori estimator to the energy error. We observe that the effectivity index remains well behaved as a function of DoFs, taking values between 1.2 and 1.85 for $k\geq1$, whereas the effectivity index is slightly greater than 2 for $k=0$. Finally, we observe that the minimum in the upper error bound \eqref{upper error bound} is always reached by the second component involving only the data oscillation term.

\subsection{Example 2: Effectivity on 2D domains with increasing number of holes}
\label{sec:num_holes}

In this example, we investigate the robustness of the proposed estimator with respect to the topology of the computational domain. In particular, we consider computational domains $(\Omega_i)_{i\in\{1{:}4\}}$ such that $\Omega_i$ is defined as the unit square domain from which $n_i$ rectangular holes are removed, with $n_i\in\{0,1,4,8\}$. The resulting domains together with the corresponding coarsest meshes are displayed in Figure~\ref{fig:four_cases}. A sequence of successively uniformly refined meshes
is employed for each domain.
The exact solution is always  $u(x,y) := (\sin(\pi x) \sin(\pi y))^2$,
and the source term together with the inhomogeneous boundary conditions on the hole boundaries are computed accordingly. Consequently, the estimators are modified as discussed in Remark~\ref{remark: Inhomogeneous BC}.
We compute the effectivity indices for the polynomial degrees $k=0$ and $k=1$.
The results are reported in Figure~\ref{fig:effectivity-k0-k1}.
We observe that the effectivity indices remain essentially stable as the number of holes increases.

\begin{figure}[!tb]
    \centering
    \begin{subfigure}{0.24\textwidth}
        \centering
        \includegraphics[width=\linewidth]{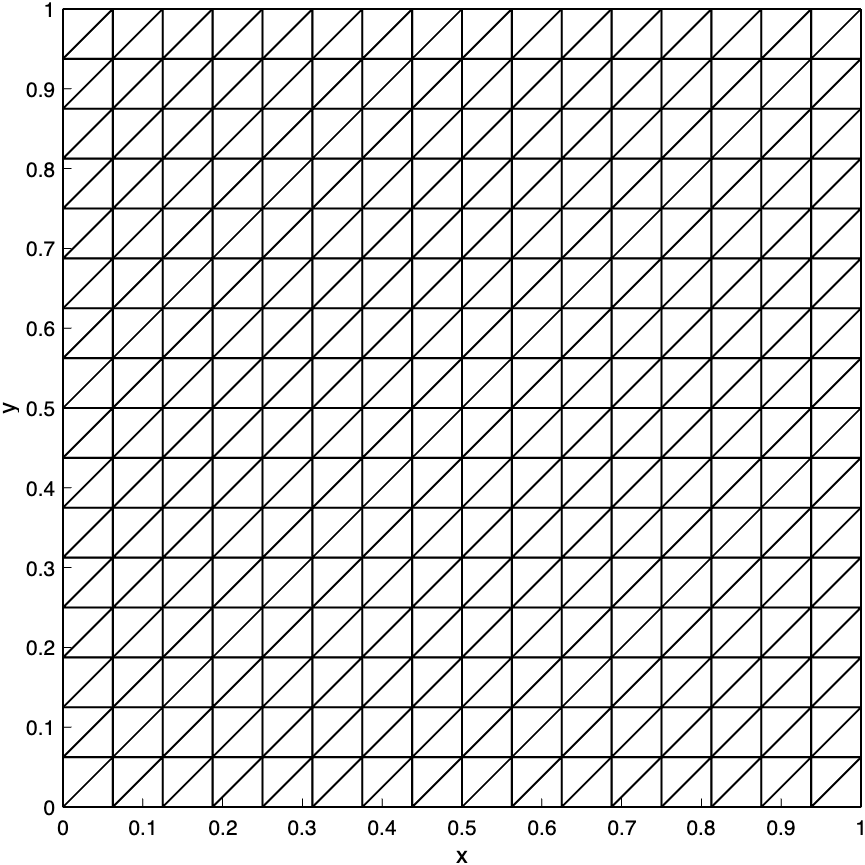}
        \caption{0 hole}
        \label{fig:first}
    \end{subfigure}
    \hfill
    \begin{subfigure}{0.24\textwidth}
        \centering
        \includegraphics[width=\linewidth]{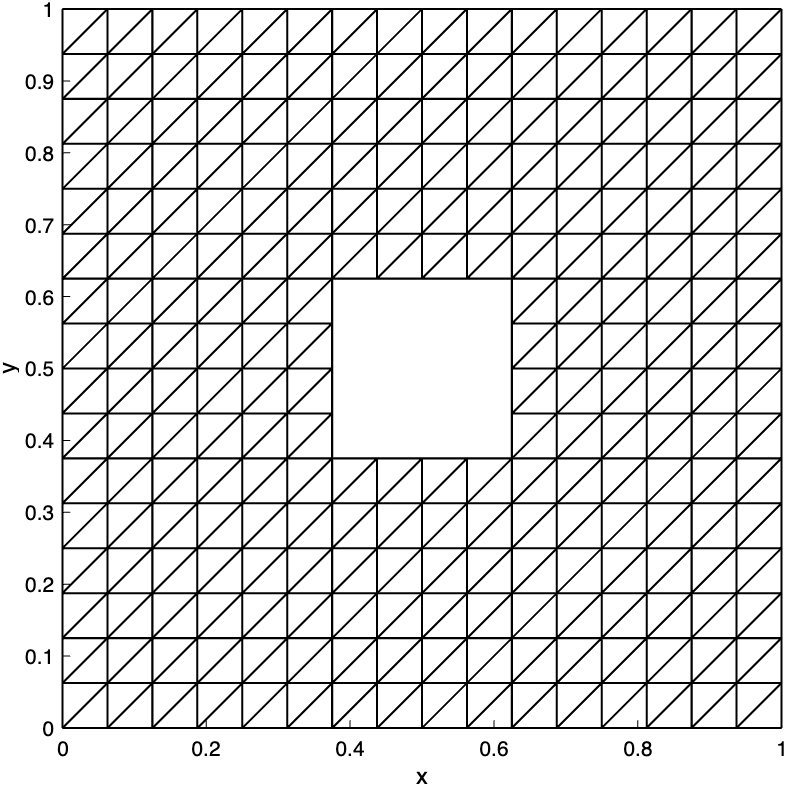}
        \caption{1 hole}
        \label{fig:second}
    \end{subfigure}
    \hfill
    \begin{subfigure}{0.24\textwidth}
        \centering
        \includegraphics[width=\linewidth]{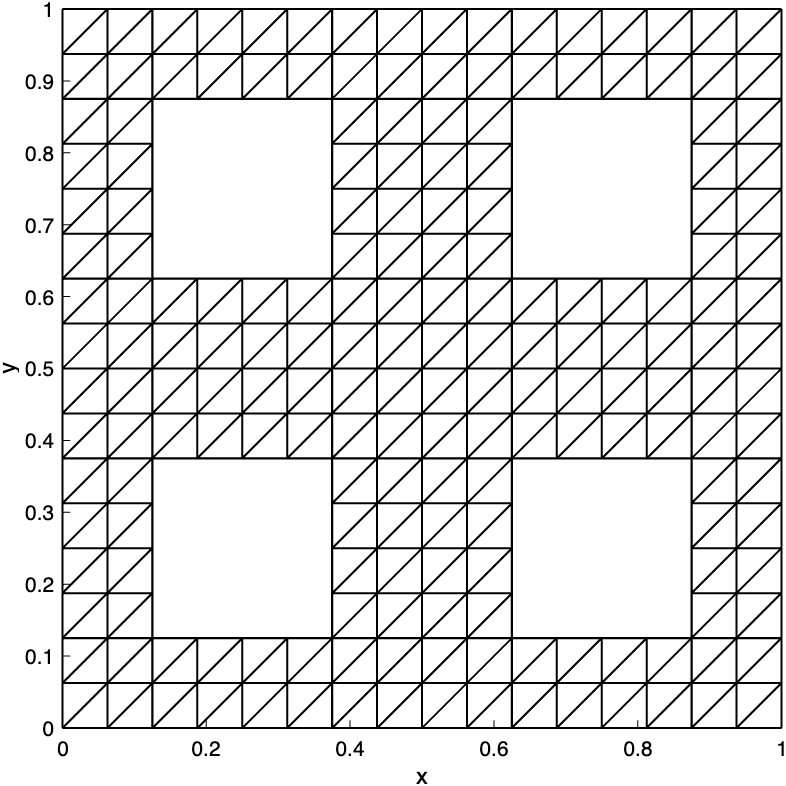}
        \caption{4 holes}
        \label{fig:third}
    \end{subfigure}
    \hfill
    \begin{subfigure}{0.24\textwidth}
        \centering
        \includegraphics[width=\linewidth]{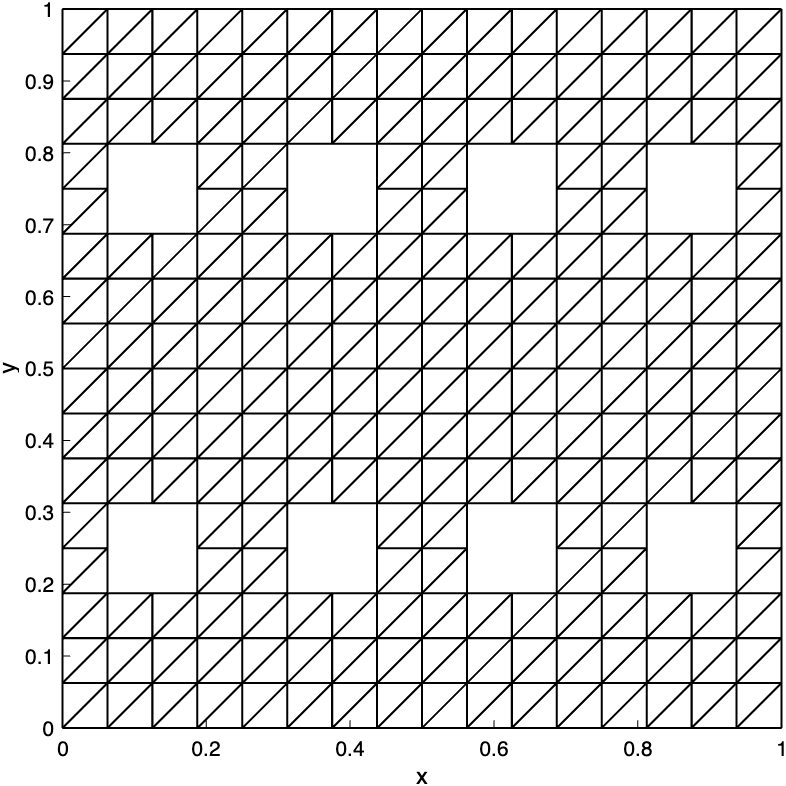}
        \caption{8 holes}
        \label{fig:fourth}
    \end{subfigure}

    \caption{Plot of coarsest mesh for domains with different numbers of holes.}
    \label{fig:four_cases}
\end{figure}

\begin{figure}[!tb]
\begin{center}
\includegraphics[width=0.45\linewidth]{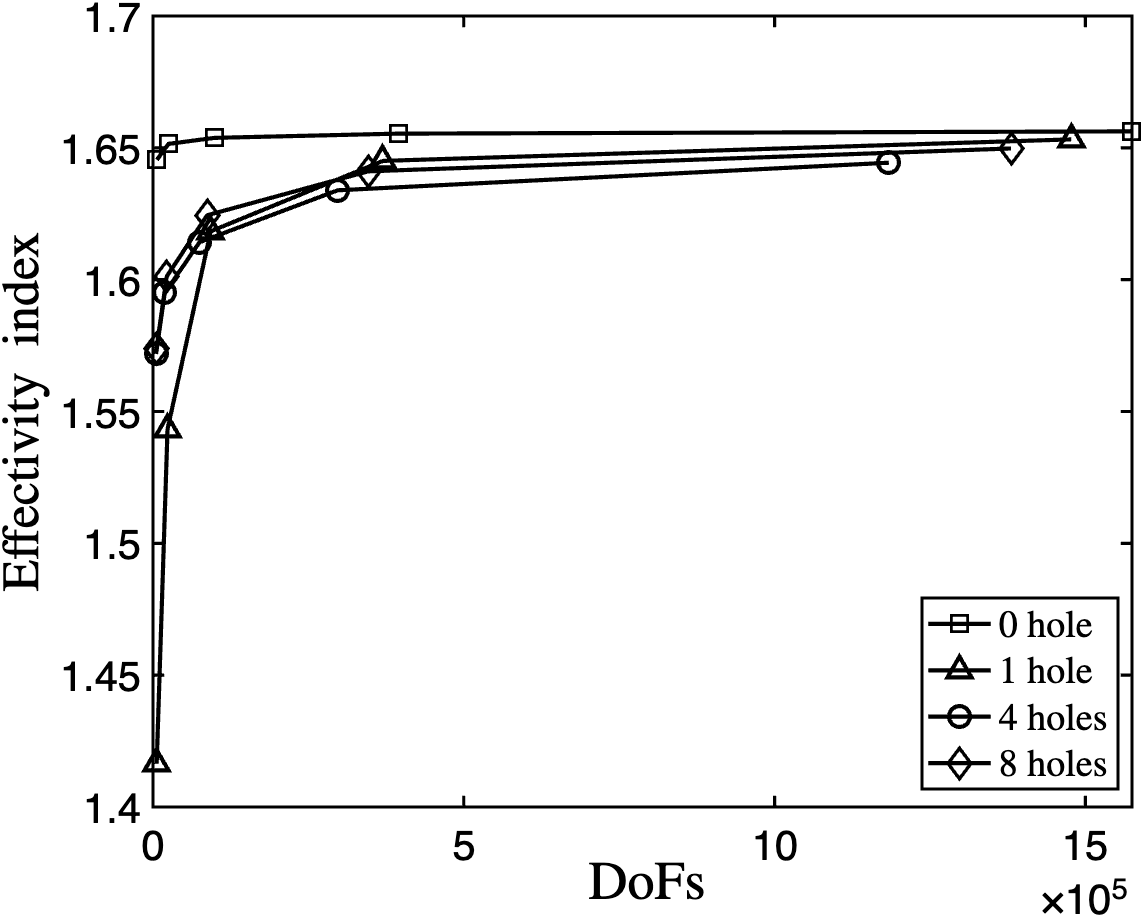}
\qquad
\includegraphics[width=0.45\linewidth]{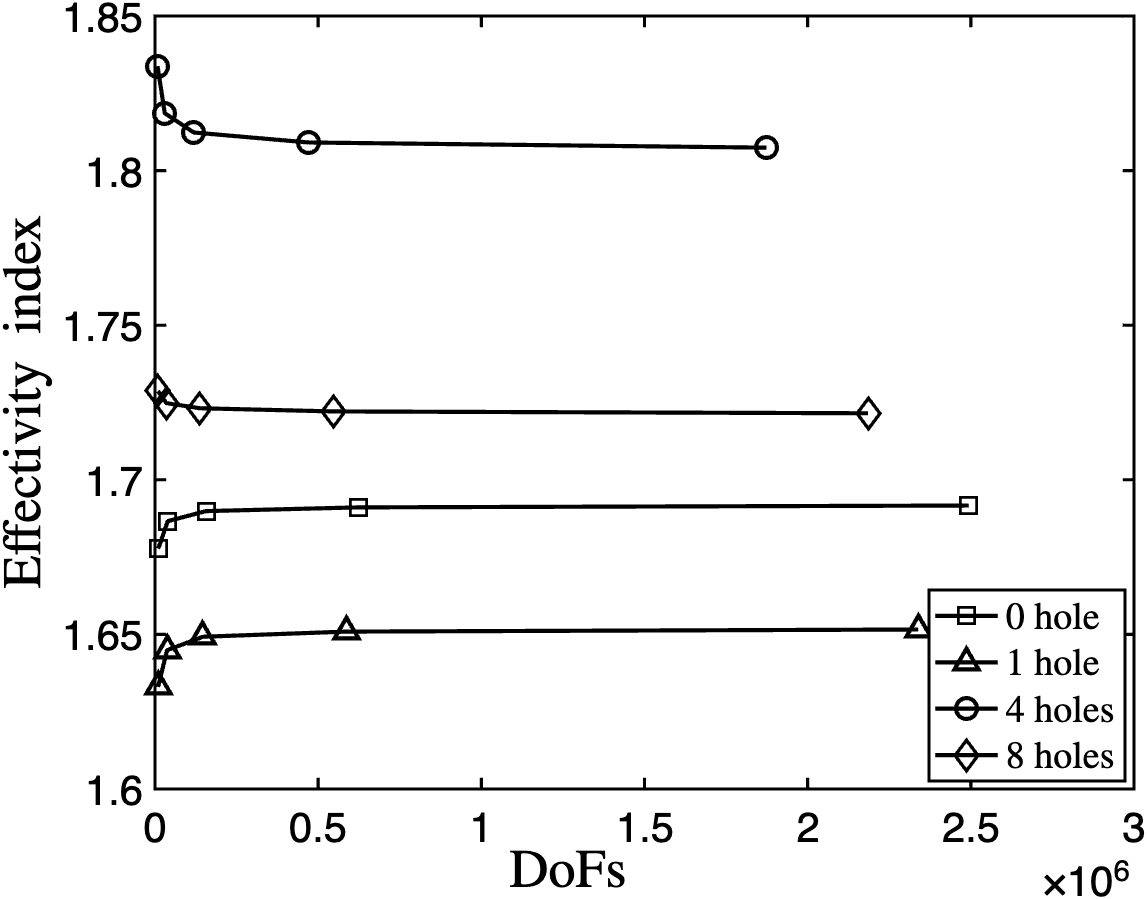}
\end{center}
\caption{Example 2: Effectivity index plotted against number of degrees of freedom for domains with $n$ holes, with $n\in\{0,1,4,8\}$. The polynomial degree is $k=0$ in the left panel and $k=1$ in the right panel.}
\label{fig:effectivity-k0-k1}
\end{figure}

\subsection{Example 3: Adaptive algorithm for 2D singular solution}
\label{sec:num_adaptive_2D}

In this example, we choose the source term $f$ and the boundary conditions on the
L-shaped domain $\Omega := (-1,1)^2 \setminus \{(0,1)\times(-1,0)\}$, so that the exact solution in polar coordinates is
\begin{equation}
u = r^{\frac43} \sin(4\theta/3).
\end{equation}
We test an $h$-adaptive algorithm driven by the a posteriori error estimator from Section~\ref{sec:apost}. The adaptive algorithm starts from a coarse mesh and uses the estimator from Theorem \ref{thm:upperbound} to mark mesh cells for refinement through a bulk-chasing criterion with parameter $30\%$ (also known as D\"orfler's marking). The set of marked elements is used to create a new, finer triangulation. Altogether, the adaptive algorithm can be classically described as a loop performing the following four tasks at each step:
$$
\text{SOLVE} \longrightarrow \text{ESTIMATE} \longrightarrow  \text{MARK} \longrightarrow \text{REFINE}.
$$
We first examine the convergence behavior of the adaptive algorithm for
$k \in \{0,1,2,3\}$. The energy error
and the a posteriori error estimator are presented
in Figure~\ref{Ex3:new_error_estimator_k=0,1,2,3}
as a function of DoFs. As there is no data oscillation term, the minimum on the right-hand side of~\eqref{upper error bound} is zero. Since the exact solution does not satisfy the homogeneous boundary conditions associated with the space $H_0^2(\Omega)$, the estimators are modified according to Remark~\ref{remark: Inhomogeneous BC} in order to incorporate the resulting inhomogeneous boundary conditions.
We observe that both
the energy error and the a posteriori estimator converge at the optimal rate
$\mathcal{O}(\mathrm{DoFs}^{-\frac{(k+1)}{2}})$. In contrast, under uniform refinement,
the energy error and the estimator converge only at the suboptimal rate
$\mathcal{O}(\mathrm{DoFs}^{-\frac16})$, independently of $k$. Moreover, we observe in the left panel of Figure~\ref{Ex3:new_effectivity k=0, 1,2,3} that the effectivity index remains well behaved as a function of DoFs, taking values between 1.5 and 1.8.
Finally, we report in the right panel of Figure~\ref{Ex3:new_effectivity k=0, 1,2,3} the effectivity index as a function of the polynomial degree $k\in\{0,\ldots,12\}$ on a mesh consisting of $96$ triangular cells. We observe an algebraic growth rate of $k^{\frac12}$, which is less than the rate $k^{\frac32}$ observed numerically for dG methods in \cite[Section 5.1]{DongMascottoSutton2021}.

\begin{figure}[!tb]
\begin{center}
\begin{tabular}{cc}
\includegraphics[width=0.45\linewidth]{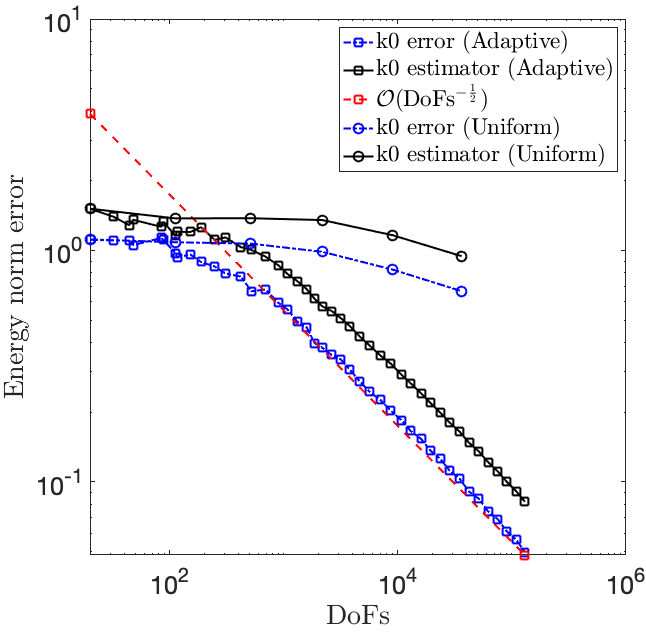} &
\includegraphics[width=0.45\linewidth]{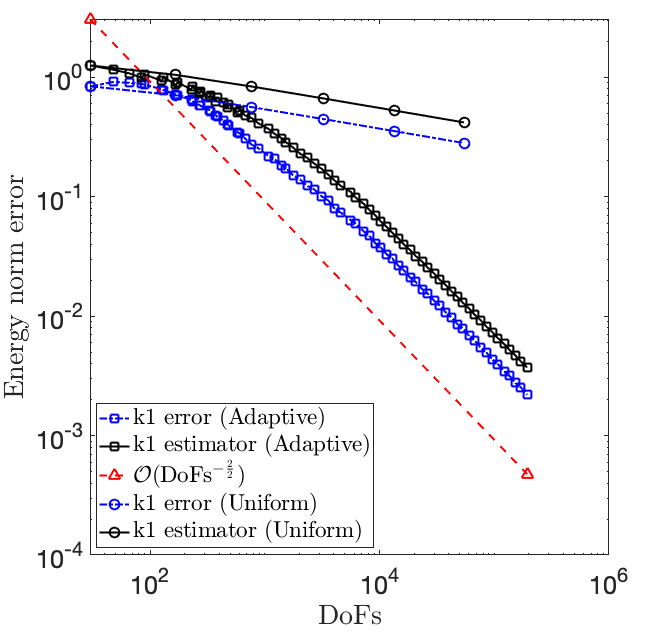} \\
\includegraphics[width=0.45\linewidth]{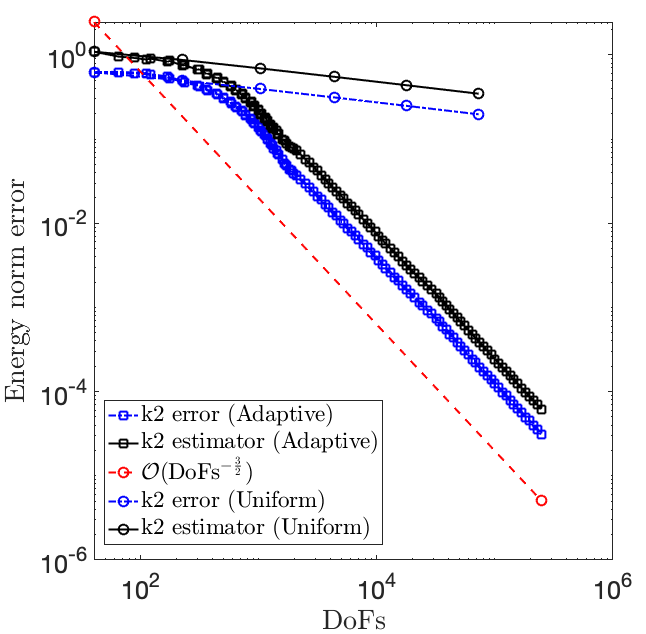} &
\includegraphics[width=0.45\linewidth]{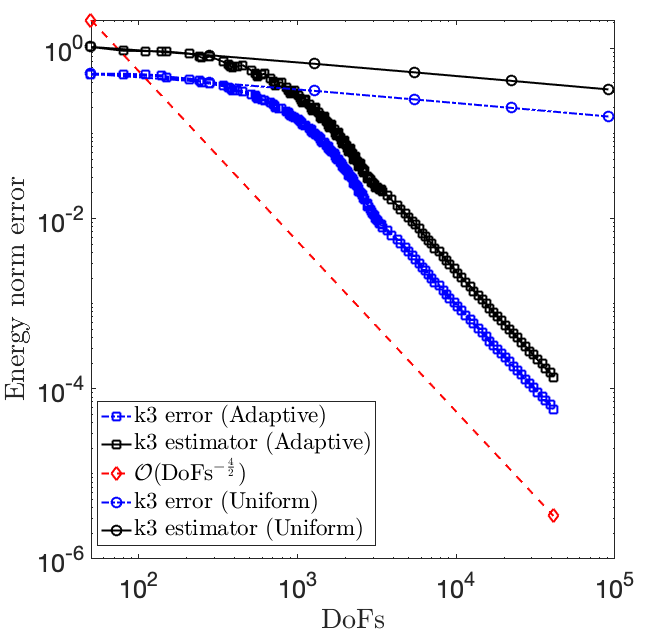}
\end{tabular}
\end{center}
\caption{Example 3: Energy error and a posteriori error estimator as a function of DoFs for $k\in\{0,1,2,3\}$.}\label{Ex3:new_error_estimator_k=0,1,2,3}
\end{figure}
\begin{figure}[!tb]
\begin{center}
\includegraphics[width=0.4\linewidth]{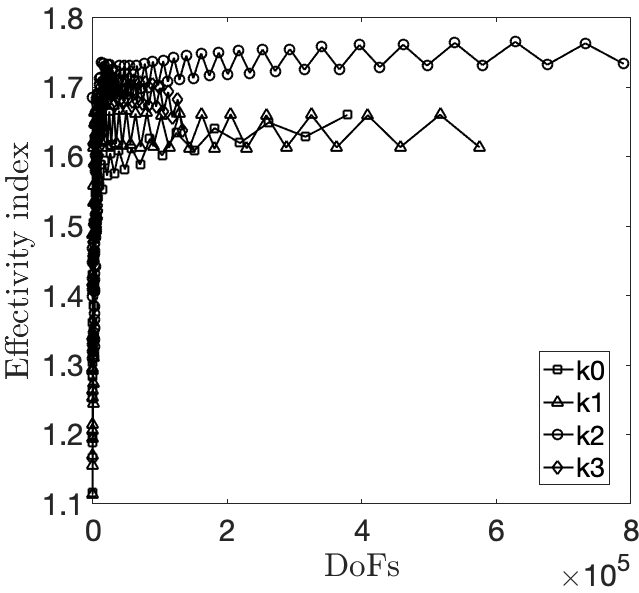}
\qquad
\includegraphics[width=0.4\linewidth]{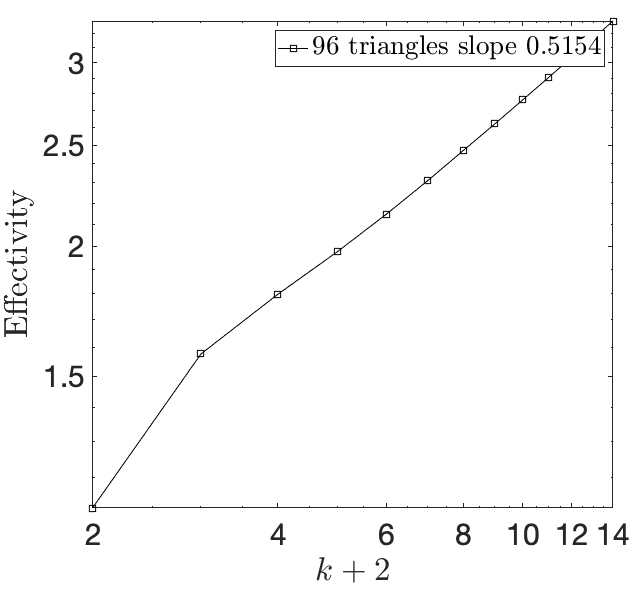}
\end{center}
\caption{Example 3: Effectivity index as a function of DoFs for $k\in\{0, 1,2,3\}$ (left panel). Effectivity index as a function of $k\in\{0, \ldots,12\}$ on a mesh composed of 96 cells (right panel).
}\label{Ex3:new_effectivity k=0, 1,2,3}
\end{figure}

\subsection{Example 4: Adaptive algorithm for 3D solution with point singularity}
\label{sec:num_adaptive_3D}

In this example, we consider the three-dimensional computational domain $\Omega:=(-1,1)^3$.
The source term $f$ and the boundary data are prescribed so that the exact solution is given by
$u := r((1-x^2)(1-y^2)(1-z^2))^2$, with $r:=(x^2+y^2+z^2)^{\frac12}$. We notice $u\in H^{2.5-\epsilon}(\Omega)$ for any arbitrarily small $\varepsilon > 0$.
We test the same $h$-adaptive algorithm as in the previous section.
We examine the convergence behavior of the adaptive algorithm for
$k \in \{0,1,2,3\}$. The energy error
and the a posteriori error estimator are presented
in Figure~\ref{Ex4:new_error_estimator_k=0,1,2,3 for 3D}
as a function of DoFs.  We observe that both
the energy error and the a posteriori estimator converge at the optimal rate
$\mathcal{O}\bigl(\mathrm{DoFs}^{-\frac{(k+1)}{3}}\bigr)$. The corresponding effectivity indices for $k\in\{0,1,2,3\}$ are displayed in Figure~\ref{Ex4:new_effectivity k=0, 1,2,3 for 3D}. We observe that the effectivity indices remain well behaved as a function of DoFs, taking values between 1.3 and 2.0 in the present three-dimensional setting.

\begin{figure}[!tb]
\begin{center}
\begin{tabular}{cc}
\includegraphics[width=0.45\linewidth]{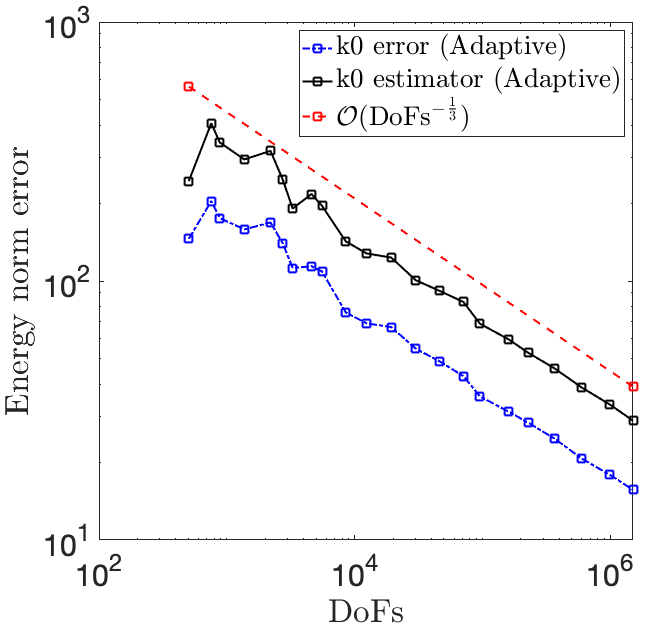} &
\includegraphics[width=0.45\linewidth]{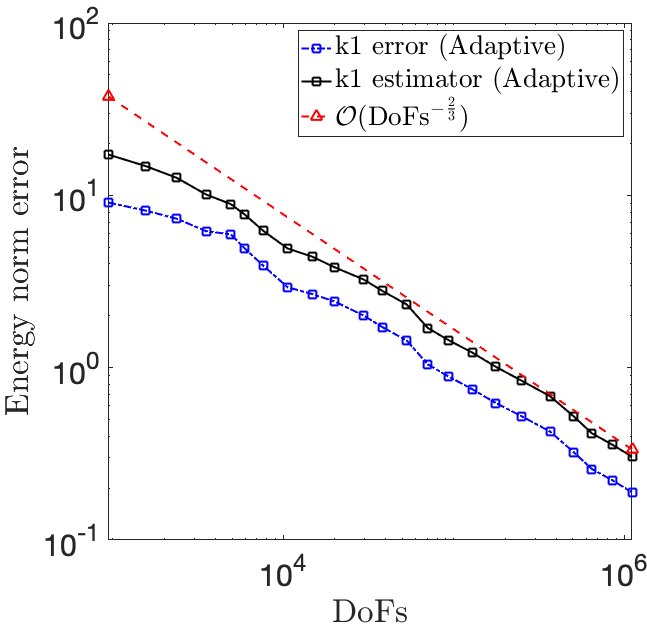} \\
\includegraphics[width=0.45\linewidth]{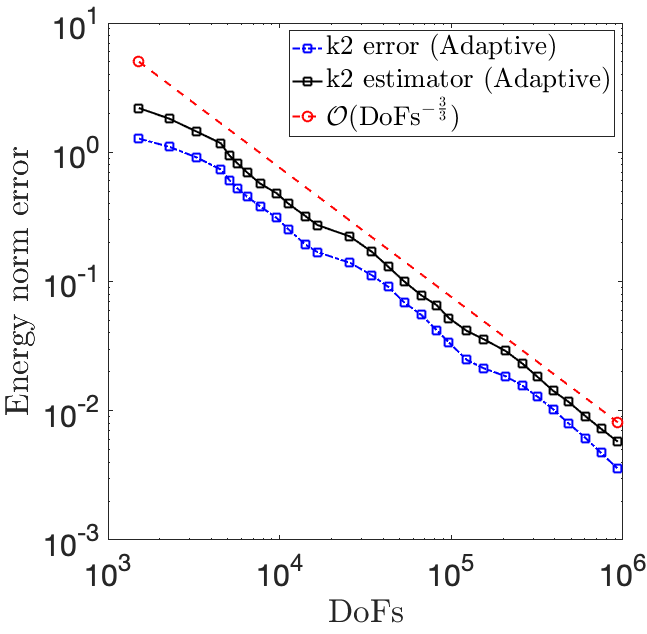} &
\includegraphics[width=0.45\linewidth]{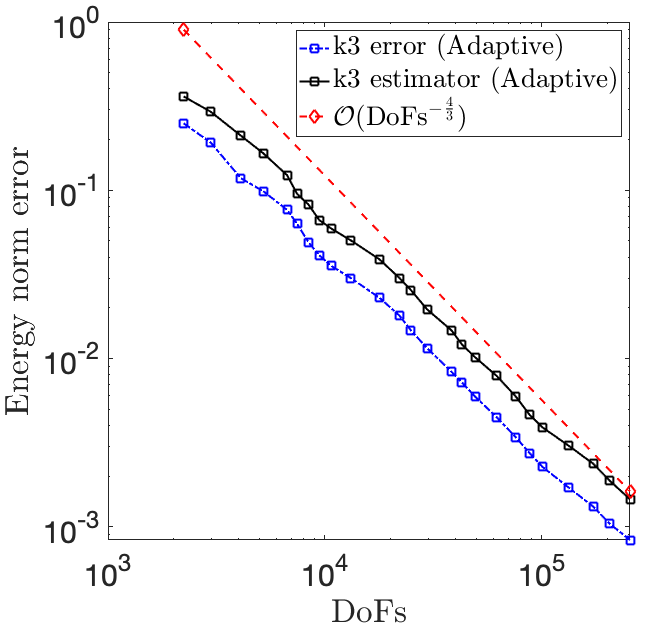}
\end{tabular}
\end{center}
\caption{Example 4: Energy error and a posteriori error estimator as a function of DoFs for $k\in\{0,1,2,3\}$.}\label{Ex4:new_error_estimator_k=0,1,2,3 for 3D}
\end{figure}
\begin{figure}[!tb]
\begin{center}
\includegraphics[width=0.4\linewidth]{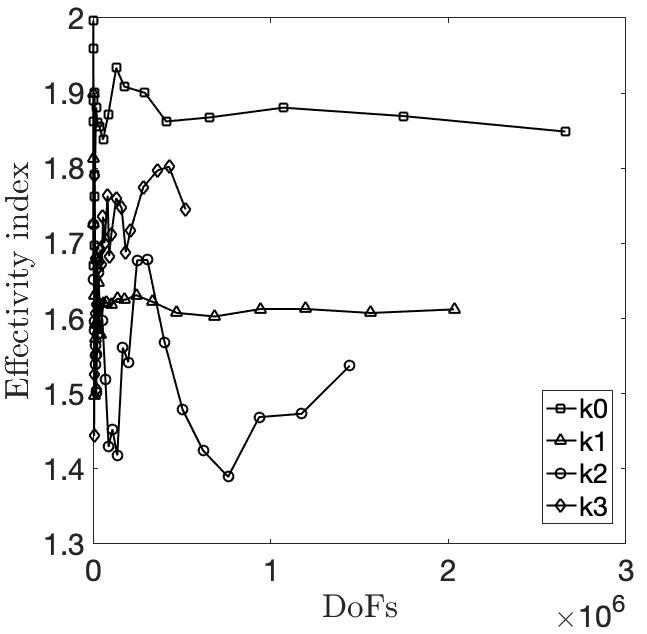}
\end{center}
\caption{Example 4: Effectivity index as a function of DoFs for $k\in\{0, 1,2,3\}$.
}\label{Ex4:new_effectivity k=0, 1,2,3 for 3D}
\end{figure}

\section{Technical proofs}\label{sec:proofs}
This section contains the proofs of Lemmas \ref{lemma: dual norm of residual} and \ref{lemma: nonconforming error total}.

\subsection{Proof of Lemma \ref{lemma: dual norm of residual}}\label{sec: proof dual norm of residual}

In this section, we prove Lemma \ref{lemma: dual norm of residual}. Specifically,
we prove in Section~\ref{sec: proof CH interpolation} that,
under Assumption~\ref{assumption 1},
\begin{subequations}
\begin{equation} \label{eq:first_bnd_res}
\|\mathcal{R}_{\mesh}\|_{H^{-2}(\Omega)}^2 \lesssim \sum_{T\in\mesh} (k+2)\big\{
\eta_{T,{\rm sta}}^2 + \mathcal{O}(f)^2_T\big\},
\end{equation}
and we prove in Section~\ref{sec: proof BS interpolation} that
\begin{equation} \label{eq:second_bnd_res}
\|\mathcal{R}_{\mesh}\|_{H^{-2}(\Omega)}^2 \lesssim \sum_{T\in\mesh} \big\{
(k+2) \eta_{T,{\rm sta}}^2 + \eta_{T,{\rm res}}^2+ \mathcal{O}(f)^2_T\big\}.
\end{equation}
\end{subequations}
The bound~\eqref{eq:second_bnd_res} is nothing but~\eqref{eq:dual_res1}, and the combination of~\eqref{eq:first_bnd_res}-\eqref{eq:second_bnd_res} readily gives~\eqref{eq:dual_res2}.

Recalling the definition~\eqref{eq:def_res_dual} of the dual residual norm $\|\mathcal{R}_{\mesh}\|_{H^{-2}(\Omega)}$, we need to bound $(\hes_{\mesh} e, \hes v)_{\Omega}$ for all $v\in H^2_0(\Omega)$.
Invoking the weak formulation \eqref{weakform} and adding/subtracting $(\hes R^{k+2}_T (\widehat{u}_T), \hes v)_{T}$, we obtain
\begin{align*}
(\hes_{\mesh} e, \hes v)_{\Omega} = &\sum_{T \in \mesh} \big\{(f,v)_{T} - (\hes R^{k+2}_T (\widehat{u}_T), \hes v)_{T} + (\hes R^{k+2}_T (\widehat{u}_T), \hes v)_{T}  \\
&\quad - (\hes u_T, \hes v)_{T} \big\}.
\end{align*}
Moreover, using the discrete HHO scheme \eqref{discrete_prob}, we infer that, for all
$\widehat{w}_h \in \widehat{V}^k_{h0}$,
\begin{align}\label{bound 41}
(\hes_{\mesh} e, \hes v)_{\Omega}
= {}&\sum_{T \in \mesh} \big\{
(f, v-w_T)_{T}
-(\hes  R^{k+2}_T (\widehat{u}_T), \hes v-\hes  R^{k+2}_T (\widehat{w}_T))_{T}
\notag \\
&
+ (\hes  (R^{k+2}_T (\widehat{u}_T) -  u_T), \hes v)_{T}
+ S_{\dK}(\widehat{u}_T, \widehat{w}_T) \big\}.
\end{align}
We need now to select a suitable discrete test function $\widehat{w}_h\in \widehat{V}^k_{h0}$.

\subsubsection{Bound using the canonical interpolation operator and Assumption~\ref{assumption 1}}\label{sec: proof CH interpolation}

The first possibility is to set $\widehat{w}_h := \widehat{\mathcal{I}}^k_h(v)$ in~\eqref{bound 41} (see~\eqref{def: global HHO interpolation CH}). Using the $H^2$-elliptic projection property \eqref{eq:H2_ell_proj}, we readily infer that the second term on the right-hand side of~\eqref{bound 41} vanishes, so that
\begin{align*}
(\hes_{\mesh} e, \hes v)_{\Omega}
&= \sum_{T \in \mesh} \Big\{
(f, v -\cC^{k+2}_T(v))_{T}  \\
&\quad
+ (\hes  (R^{k+2}_T (\widehat{u}_T) - u_T), \hes v)_{T} + S_{\dK}(\widehat{u}_T,  \hat{\mathcal{I}}_{T}^k(v))\Big\}.
\end{align*}
Using the orthogonality property \eqref{cell property} for $\cC_T^{k+2}$, the Cauchy--Schwarz inequality, and invoking the estimate from Lemma~\ref{prop}, we obtain
\begin{align*}
(\hes_{\mesh} e, \hes v)_{\Omega}
\lesssim{}& \sum_{T \in \mesh} \Big\{
(f  - \Pi^{k-2}_T (f), v - \cC^{k+2}_T (v))_{T} \\
&+ S_{\dK}(\widehat{u}_T, \widehat{u}_T)^{\frac{1}{2}}
\big\{\|\hes v\|_{T}
+ S_{\dK}( \hat{\mathcal{I}}_{T}^k(v), \hat{\mathcal{I}}_{T}^k(v))^{\frac{1}{2}} \big\} \Big\}.
\end{align*}
Using the bound \eqref{Stabilisation of reduction bound: CH} from Lemma~\ref{lem:bound_stab}, we obtain
\begin{align}\label{bound 45}
(\hes_{\mesh} e, \hes v)_{\Omega} \lesssim {}& \sum_{T \in \mesh} \Big\{ \| f  - \Pi^{k-2}_T (f) \|_{T}\|v - \cC^{k+2}_T (v)\|_{T}\nonumber\\
&\qquad
+ S_{\dK}(\widehat{u}_T, \widehat{u}_T)^{\frac{1}{2}} (k+2)^{\frac{1}{2}}\|\hes v\|_{T}
\Big\}.
\end{align}
Next, we bound $\|v - \cC^{k+2}_T (v)\|_{T}$. For all $k\in\{0,\ldots,d-1=2\}$, the vertex prescription condition~\eqref{vertex property} implies that $\mathcal{L}^1_T (v - \cC^{k+2}_T v)=0$, with the linear Lagrange interpolation operator $\mathcal{L}^1_T$ in the mesh cell $T$. Invoking the $H^2$-stability of $ \cC^{k+2}_T$,  we infer that
\begin{equation*}
\|v - \cC^{k+2}_T (v)\|_{T}= \|(I - \mathcal{L}^1_T )(v - \cC^{k+2}_T (v))\|_{T}\lesssim h_T^2 \|\hes (v - \cC^{k+2}_T (v))\|_{T} \lesssim h_T^2 \|\hes v \|_{T},
\end{equation*}
where $I$ is the identity operator.
In the case $k\geq d=3$, using the orthogonality property \eqref{cell property}, we obtain
\begin{align*}
\|v - \cC^{k+2}_T (v)\|_{T}^2
&= (v - \cC^{k+2}_T (v), v - \cC^{k+2}_T (v))_{T} \\
&=(v - \cC^{k+2}_T (v), (I-\mBST^{k-2})(v - \cC^{k+2}_T (v)))_{T},
\end{align*}
so that
$$
\|v - \cC^{k+2}_T (v)\|_{T} \le \|(I-\mBST^{k-2})(v - \cC^{k+2}_T (v))\|_{T}.
$$
Invoking the approximation property \eqref{eq: Modified BS approximation} gives
\begin{equation*}
\|(I-\mBST^{k-2})(v - \cC^{k+2}_T (v))\|_{T}
\lesssim \bigg\{\frac{h_T}{k-2}\bigg\}^2
\|\hes(v - \cC^{k+2}_T (v))\|_{T}.
\end{equation*}
We further apply the triangle inequality to obtain
$$
\|\hes(v - \cC^{k+2}_T (v))\|_{T} \leq  \|\hes(v -  R_T^{k+2}(\widehat{\mathcal{I}}_{T}^k(v)))\|_{T} + \|\hes( R_T^{k+2}(\widehat{\mathcal{I}}_{T}^k(v)) - \cC^{k+2}_T (v))\|_{T}.
$$
Owing to the $H^2$-elliptic projection property \eqref{eq:H2_ell_proj}, the first term on the right-hand side is bounded by $\|\hes v\|_{T}$. Moreover, owing to the bound~\eqref{HHO relation} from Lemma~\ref{prop} and the bound \eqref{Stabilisation of reduction bound: CH} from Lemma~\ref{lem:bound_stab}, we obtain
$$
\|\hes( R_T^{k+2}(\widehat{\mathcal{I}}_{T}^k(v)) - \cC^{k+2}_T (v))\|_{T} \lesssim
S_{\dK}(\widehat{\mathcal{I}}_{T}^k(v), \widehat{\mathcal{I}}_{T}^k(v))^{\frac12} \lesssim
(k+2)^{\frac12}\|\hes v\|_{T}.
$$
As a result, we have $\|\hes(v - \cC^{k+2}_T (v))\|_{T} \lesssim (k+2)^{\frac{1}{2}}\|\hes v\|_{T}$. Putting the above bounds together, we infer that, for all $k\ge d=3$,
$$
\|v - \cC^{k+2}_T (v)\|_{T} \lesssim \bigg\{\frac{h_T}{k-2}\bigg\}^2
(k+2)^{\frac{1}{2}}\|\hes v\|_{T}.
$$
Combining this estimate with the bounds on $\|v - \cC^{k+2}_T (v)\|_{T}$ for all $k\in\{0,\ldots,d-1=2\}$, we infer that
$$
\|v - \cC^{k+2}_T (v)\|_{T} \lesssim \hbar_T^2 (k+2)^{\frac{1}{2}}\|\hes v\|_{T},
$$
recalling that $\hbar_T:= \frac{h_T}{k+2}$ and $k+2\lesssim k-2$ for $k\ge3$.
Inserting this bound in~\eqref{bound 45} gives
$$
(\hes_{\mesh} e, \hes v)_{\Omega} \lesssim \sum_{T \in \mesh} \Big\{ \hbar_T^2\| f  - \Pi^{k-2}_T (f) \|_{T}
+ S_{\dK}(\widehat{u}_T, \widehat{u}_T)^{\frac{1}{2}} \Big\} (k+2)^{\frac{1}{2}}\|\hes v\|_{T}.
$$
Recalling the definitions \eqref{eq:def_eta_sta} and \eqref{est:oscillation} of $\eta_{T,{\rm sta}}^2$ and $\mathcal{O}(f)^2_T$, respectively, we conclude that
$$
(\hes_{\mesh} e, \hes v)_{\Omega} \lesssim \bigg\{ \sum_{T \in \mesh} (k+2)\big\{ \eta_{T,{\rm sta}}^2 + \mathcal{O}(f)^2_T\big\} \bigg\}^{\frac12} \|\hes v\|_\Omega.
$$
This proves that, under Assumption~\ref{assumption 1}, \eqref{eq:first_bnd_res} holds true.

\subsubsection{Bound using the Babu\v{s}ka--Suri interpolation operator} \label{sec: proof BS interpolation}

We now set $\widehat{w}_h := \widehat{\mathcal{J}}^k_h(v)$ in~\eqref{bound 41} (see~\eqref{def: global HHO interpolation BS}).
Let us set $\zeta:=v-\mBS^{k+2}(v)$.
Invoking the definition~\eqref{reconstruct_a} of the reconstruction operator in the second term on the right-hand side of~\eqref{bound 41} (here, $R^{k+2}_T (\widehat{u}_T)$ plays the role of the discrete test function), we obtain
\begin{align*}
(\hes  R^{k+2}_T (\widehat{u}_T), \hes v-\hes  R^{k+2}_T (\widehat{w}_T))_{T}
= {}&(\hes  R^{k+2}_T (\widehat{u}_T), \hes \zeta)_{T} \\
&+ \big(\partial_{nn}  R^{k+2}_T (\widehat{u}_T), \partial_n \mBS^{k+2}(v)- \Pi^k_{\dK}(\partial_{n} v)\big)_{\dK},
\end{align*}
where we used that the second component of $\widehat{\mathcal{J}}^k_h(v)$ is the trace on the mesh skeleton of its first component. Substituting this expression in~\eqref{bound 41} gives
\begin{align*}
(\hes_{\mesh} e, \hes v)_{\Omega}
&= \sum_{T \in \mesh} \Big\{
(f, \zeta)_{T}
+ S_{\dK}(\widehat{u}_T, \hat{\mathcal{J}}_{T}^k(v) )
+ (\hes  (R^{k+2}_T (\widehat{u}_T) - u_T), \hes v)_{T}
\notag \\
& \quad - (\hes  R^{k+2}_T (\widehat{u}_T), \hes \zeta)_{T}
- \big(\partial_{nn}  R^{k+2}_T (\widehat{u}_T), \partial_n \mBS^{k+2}(v)- \Pi^k_{\dK}(\partial_{n} v)\big)_{\dK}
\Big\}.
\end{align*}
Applying the integration by parts formula~\eqref{eq:IPP_bis} to the fourth term on the right-hand side, we infer that
\begin{align*}
(\hes_{\mesh} e, \hes v)_{\Omega} ={}& \sum_{T \in \mesh} \Big\{ (f - \Delta^2  R^{k+2}_T (\widehat{u}_T) , \zeta)_{T}
+ S_{\dK}(\widehat{u}_T, \hat{\mathcal{J}}_{T}^k(v) )\\
&
+ (\hes  (R^{k+2}_T (\widehat{u}_T) - u_T), \hes v)_{T}
+ (\partial_n \Delta  R^{k+2}_T (\widehat{u}_T), \zeta)_{\dKi} \\
&
 - \big(\partial_{nt}  R^{k+2}_T (\widehat{u}_T), \partial_t \zeta)_{\dKi}
- \big(\partial_{nn}  R^{k+2}_T (\widehat{u}_T), \partial_nv-\Pi_{\dK}^k(\partial_nv))_{\dKi} \Big\}.
\end{align*}
Notice that the last three terms on the right-hand side are restricted to $\dKi$ since $\zeta$ and $\partial_nv$ vanish on $\partial\Omega$. Moreover, since $\partial_{nn}  R^{k+2}_T (\widehat{u}_T) \in \mathcal{P}^k(\FK)$ for all $T \in \mesh$, the last term on the above right-hand side vanishes. In addition, since $\zeta$ and $\partial_t\zeta$ are single-valued on each interface $F \in \Fint$, we obtain
\begin{align*}
(\hes_{\mesh} e, \hes v)_{\Omega} ={}& \sum_{T \in \mesh} \Big\{ (f - \Delta^2  R^{k+2}_T (\widehat{u}_T) , \zeta)_{T}
+ S_{\dK}(\widehat{u}_T, \hat{\mathcal{J}}_{T}^k(v) )\\
& + (\hes  (R^{k+2}_T (\widehat{u}_T) -  u_T), \hes v)_{T}
+ \frac{1}{2}(\sjump{\partial_n \Delta  R^{k+2}_T (\widehat{u}_T)}, \zeta)_{\dKi}
\\
&  - \frac12 \big(\sjump{\partial_{nt}  R^{k+2}_T (\widehat{u}_T)}, \partial_t \zeta)_{\dKi}\Big\}.
\end{align*}
Invoking the Cauchy--Schwarz inequality, we infer that
\begin{align*}
&|(\hes_{\mesh} e, \hes v)_{\Omega}| \leq
\sum_{T \in \mesh} \bigg\{ \hbar_T^2\|f - \Delta^2  R^{k+2}_T (\widehat{u}_T)\|_{T} \hbar_T^{-2}\|\zeta\|_{T} \\
&+ \|\hes  R^{k+2}_T (\widehat{u}_T) - \hes u_T\|_{T}\|\hes v\|_{T} + S_{\dK}(\widehat{u}_T, \widehat{u}_T)^{\frac{1}{2}} S_{\dK}(\hat{\mathcal{J}}_{T}^k(v), \hat{\mathcal{J}}_{T}^k(v))^{\frac{1}{2}} \\
&+  \frac12 \hbar_T^{\frac{3}{2}}\|\sjump{\partial_n \Delta  R^{k+2}_T (\widehat{u}_T)}\|_{\dKi}\hbar_T^{-\frac{3}{2}}\|\zeta\|_{\dKi} +  \frac{1}{2}\hbar_T^{\frac{1}{2}}\|\sjump{\partial_{nt}  R^{k+2}_T (\widehat{u}_T)}\|_{\dKi}\hbar_T^{-\frac{1}{2}}\|\partial_t \zeta\|_{\dKi} \bigg\}.
\end{align*}
Invoking the approximation properties of the modified Babuška--Suri operator (see Corollary \ref{cor: global interpolation of BS}), Lemma~\ref{prop} to bound $\|\hes  R^{k+2}_T (\widehat{u}_T) - \hes u_T\|_{T}$, and the bound \eqref{Stabilisation of reduction bound: BS} on $S_{\dK}(\hat{\mathcal{J}}_{T}^k(v), \hat{\mathcal{J}}_{T}^k(v))$ gives
\begin{align*}
|(\hes_{\mesh} e, \hes v)_{\Omega}| \lesssim {}&
\bigg\{ \sum_{T \in \mesh} \Big\{ \hbar_T^4\|f - \Delta^2  R^{k+2}_T (\widehat{u}_T)\|_{T}^2 + S_{\dK}(\widehat{u}_T, \widehat{u}_T) \\
&+ \hbar_T^3\|\sjump{\partial_n \Delta  R^{k+2}_T (\widehat{u}_T)}\|_{\dKi}^2
+ \hbar_T \|\sjump{\partial_{nt}  R^{k+2}_T (\widehat{u}_T)}\|_{\dKi}^2 \Big\} \bigg\}^{\frac12} \|\hes v\|_\Omega.
\end{align*}
Using the triangle inequality for the first term on the right-hand side and recalling the definitions of $\eta_{T,{\rm sta}}$, $\eta_{T,{\rm res}}$, and $\mathcal{O}(f)_T$
proves~\eqref{eq:second_bnd_res}.

\subsection{Proof of Lemma \ref{lemma: nonconforming error total}}\label{sec: proof full bound}

In this section, we prove Lemma \ref{lemma: nonconforming error total}, namely
$$
\|\hes_{\mesh} (u_c - u_{\mesh})\|^2_{\Omega} \lesssim \sum_{T \in \mesh} \big\{\eta_{T,{\rm sta}}^2
+ \eta_{T, {\rm tan}}^2 \big\}.
$$
Recalling the definition~\eqref{uc:partition} of $u_c$ and invoking the partition-of-unity property~\eqref{eq:pou}, we observe that
$$
u_c - u_{\mesh} = \sum_{\b{a}\in\vertice} \big\{ u_c^{\b{a}} - \psi_{\b{a}} u_{\mesh} \big\}.
$$
Setting $\delta_{\ba}:= u^{\ba}_c - \psi_{\ba}  u_{\mesh}$ for all $\ba \in \vertice$, and invoking the shape-regularity of the mesh, it is sufficient to prove that
\begin{equation}\label{bound enc_local}
\|\hes_{\mesh}\delta_{\ba}\|^2_{\oma} \lesssim \sum_{T \in \T_{\ba}} S_{{\dK}}(\hat{u}_T, \hat{u}_T) + \sum_{F \in \Fa} \hbar_F \big\{\|\sjump{\partial_{tt} u_{\mesh}}\|_{F}^2 + \|\sjump{\partial_{nt} u_{\mesh}}\|_{F}^2\big\},
\end{equation}
where $\hbar_F:=\frac{h_F}{k+2}$ and  $\Fa$ denotes the set of mesh faces in $\overline{\oma}$ that contain the vertex $\ba$. We also denote by $\bFa$ the collection of the mesh faces lying on $\partial\oma$ that do not contain $\ba$. Thus, the two sets $\Fa$ and $\bFa$ are disjoint, and their union contains all the mesh faces in $\overline{\oma}$.

Let us now prove~\eqref{bound enc_local}. To this purpose, we are going to invoke the following result: For all $\ba \in \vertice$, there is
a local interpolation operator $\mathcal{I}^{k+1,\ba}_{\rm{mKM}}: H^1(\oma) \to \mathbb{P}^{k+1}(\oma) \cap H^1(\oma)$
such that, for all $v \in H^1(\oma)$ and all $T \in \cT_{\ba}$, the following holds:
\begin{equation}\label{eq: Modified KM approximation}
\hbar_T^{-1}\|{v} - \mathcal{I}^{k+1,\ba}_{\rm{mKM}} (v)\|_{T} + \hbar_T^{-\frac12}\|{v} - \mathcal{I}^{k+1,\ba}_{\rm{mKM}}(v)\|_{\dK}  + \|\nabla \mathcal{I}^{k+1,\ba}_{\rm{mKM}}(v)\|_{T}
 \lesssim \| \nabla v\|_{\oma}.
\end{equation}
The original proof can be found in \cite{KarKulikMelenk15, melenk_rough}, and a modified version can be found in \cite{DongErn2025hp}, employing the $H^1$-seminorm on the right-hand side, as in the proof of Lemma~\ref{cor: global interpolation of BS}.
We decompose the proof of~\eqref{bound enc_local} into several steps.

(1) Invoking the local Helmholtz decomposition from Lemma~\ref{lemma:Helmholtz-dec} with ${\ut{\b{\Sigma}}} := \hes_{\mesh} \delta_{\ba}$, we obtain
\begin{align*}
\|\hes_{\mesh} \delta_{\ba}\|_{\oma}^2
= \big(\hes_{\mesh} \delta_{\ba},\, \hes {\xi}\big)_{\oma}
+ \big(\hes_{\mesh} \delta_{\ba},\, \curlrw{\ut{\bpsi}}\big)_{\oma}
+ \big(\hes_{\mesh} \delta_{\ba},\, \sk(\b{\rho})\big)_{\oma}.
\end{align*}
The first term vanishes by virtue of \eqref{def: vertex reconstruction} since
$\xi\in H^2_0(\oma)$,
whereas the last term vanishes owing to the skew-symmetry of $\sk(\b{\rho})$.
Next, adding/subtracting the term  $(\hes_{\mesh} \delta_{\ba},\, \curlrw {\mKM(\b{\ut{\psi}} )})_{\oma}$ gives
\begin{align*}
\|\hes_{\mesh} \delta_{\ba}\|_{\oma}^2
&= \big(\hes_{\mesh} \delta_{\ba},\curlrw{(\ut{\b{\psi}} - \mKM (\b{\ut{\psi}}) )}\big)_{\oma} + \big(\hes_{\mesh} \delta_{\ba},\curlrw{\mKM(\b{\ut{\psi}})}\big)_{\oma} \\
&=: T_1 +T_2.
\end{align*}
The terms $T_1$ and $T_2$ are estimated independently.

(2) Bound on $T_1$. Introducing the column vectors $\{\b{\psi}_j\}_{j\in\{1{:}d\}}$ obtained by transposing the rows of $\b{\ut{\psi}}$ and recalling the definition of $\delta_{\ba}$, we have
$$
T_1= \sumj \big(\nabla_{\mesh} \partial_j(u_c^{\ba}-\psi_{\ba}u_{\mesh}),\nabla {\times} (\b{\psi}_j - \mKM (\b{\psi}_j))\big)_{\oma},
$$
where we used the summation convention on repeated indices and where $\partial_j$ acts elementwise.
We integrate by parts the curl operator in each tetrahedron $T\in \mesha$.
Since $u_c^{\ba} \in H^2_0(\oma)$, $\b{n}_{\oma}{\times} \nabla\partial_j u_c^{\ba} = \mbf{0}$, for all $j\in\{1{:}d\}$, on all the mesh faces composing $\partial\oma$, we have
$$
\big(\nabla_{\mesh} \partial_ju_c^{\ba},\curl(\b{\psi}_j - \mKM (\b{\psi}_j))\big)_{\oma}= 0.
$$
Moreover, since $\b{\psi}_j - \mKM(\b{\psi}_j)\in \b{H}^1(\oma)$ is single-valued on all faces $F \in  \cF_a$ and $\psi_{\ba} u_{\mesh}|_F = 0$ for all faces $F\in\bFa$, we obtain
\begin{align}\label{eqn1_nonconf}
T_1
&=  \sum\limits_{F \in \Fa} \sumj (\b{n}_{F} {\times} \sjump{\nabla \pd{j} (\psi_{\ba} u_{\mesh})} ,\b{\psi}_j - \mKM(\b{\psi}_j))_{F}.
\end{align}
Using the Cauchy--Schwarz inequality, we obtain
\begin{align*}
|T_{1}| \leq{}& \bigg\{\sum\limits_{F \in {\cF}_a} \sumj \hbar_F \| \b{n}_{F} {\times} \sjump{\nabla \pd{j} (\psi_{\ba} u_{\mesh})}  \|^2_F\bigg\}^{\frac{1}{2}}\\
&\times \bigg\{ \sum\limits_{F \in {\cF}_a} \sumj \hbar_F^{-1} \|\b{\psi}_j - \mKM(\b{\psi}_j)\|^2_F\bigg\}^{\frac{1}{2}}.
\end{align*}
Invoking the shape-regularity of the mesh, the approximation estimate~\eqref{eq: Modified KM approximation}, and the stability of the local Helmholtz decomposition (see~\eqref{bound:Helmholtz}), we infer that
$$
\bigg\{ \sum\limits_{F \in {\cF}_a} \sumj \hbar_F^{-1} \|\b{\psi}_j - \mKM(\b{\psi}_j)\|^2_F\bigg\}^{\frac{1}{2}} \lesssim \sumj \|\nabla \b{\psi}_j\|^2_{\oma} \lesssim \|\hes_{\mesh} \delta_{\ba}\|_{\oma}.
$$
Hence,
$$
|T_{1}| \lesssim \bigg\{\sum\limits_{F \in \Fa} \sumj \hbar_F \| \b{n}_F {\times} \sjump{\nabla \pd{j} (\psi_{\ba} u_{\mesh})}\|^2_F\bigg\}^{\frac{1}{2}}
\|\hes_{\mesh} \delta_{\ba}\|_{\oma}.
$$
Next, applying the product rule gives
\begin{equation*}
\nabla(\pd{j} (\psi_{\ba} u_{\mesh}))
= \psi_{\ba}\nabla \pd{j} u_{\mesh}
+ u_{\mesh}\nabla \pd{j} \psi_{\ba}
+ \pd{j} \psi_{\ba}\nabla u_{\mesh}
+ \pd{j} u_{\mesh}\nabla \psi_{\ba}.
\end{equation*}
Since $\psi_{\ba}$ is single-valued at $F$ and $\|\psi_{\ba}\|_{L^\infty(F)} \lesssim 1$, we infer that
\begin{align*}
\sum\limits_{F \in \Fa} \sumj \hbar_F \| \b{n}_F {\times} \sjump{\psi_{\ba}\nabla \pd{j} u_{\mesh}}\|^2_F &=\sum\limits_{F \in \Fa} \sumj \hbar_F \| \psi_{\ba} \b{n}_F {\times} \sjump{\nabla \pd{j} u_{\mesh}}\|^2_F \\
&\lesssim \sum\limits_{F \in \Fa} \sumj \hbar_F \| \b{n}_F {\times} \sjump{\nabla \pd{j} u_{\mesh}}\|^2_F \\
&\lesssim \sum\limits_{F \in \Fa} \hbar_F \big\{\|\sjump{\partial_{tt} u_{\mesh}}\|_{F}^2 + \|\sjump{\partial_{nt} u_{\mesh}}\|_{F}^2\big\}.
\end{align*}
Since $\n_F \times \nabla \pd{j}\psi_{\ba}$ is single-valued at $F$, $\|\nabla \pd{j}\psi_{\ba}\|_{\bL^{\infty}(F)} \lesssim h_F^{-2}$, and $h_F^{-1}\le \hbar_F^{-1}$, we infer that
\begin{align*}
\sum\limits_{F \in \Fa} \sumj \hbar_F \| \n_F \times \sjump{u_{\mesh}\nabla \pd{j} \psi_{\ba}} \|^2_F &\lesssim \sum\limits_{F \in \Fa} h_F^{-4} \hbar_F \|\sjump{u_{\mesh}}\|_F^2
\le \sum\limits_{F \in \Fa} \hbar_F^{-3} \|\sjump{u_{\mesh}}\|_F^2.
\end{align*}
Since $\pd{j}\psi_{\ba}$ is single-valued at $F$ and $\|\pd{j}\psi_{\ba}\|_{L^{\infty}(F)} \lesssim h_F^{-1}$, we infer that
\begin{align*}
&\sum\limits_{F \in \Fa} \sumj \hbar_F \| \n_F \times \sjump{\pd{j} \psi_{\ba}\nabla u_{\mesh}} \|^2_F\\ &\lesssim \sum\limits_{F \in \Fa} h_F^{-2}\hbar_F\| \n_F \times \sjump{\nabla u_{\mesh}} \|^2_F
\lesssim \sum\limits_{F \in \Fa} (k+2)^2 h_F^{-2}\hbar_F^{-1}\| \sjump{u_{\mesh}} \|^2_F
= \sum\limits_{F \in \Fa} \hbar_F^{-3}\| \sjump{u_{\mesh}} \|^2_F,
\end{align*}
where the first bound on the second line follows from the inverse inequality \eqref{eq:discrete inverse} on $F$.
Finally, since $\nabla\psi_{\ba}$ is single-valued at $F$ and $\|\nabla\psi_{\ba}\|_{\bL^\infty(F)} \lesssim h_F^{-1}$, we infer that
\begin{align*}
\sum\limits_{F \in \Fa} \sumj \hbar_F \| \n_F \times \sjump{\pd{j} u_{\mesh}\nabla \psi_{\ba}} \|^2_F &\lesssim \sum\limits_{F \in \Fa} \sumj h_F^{-2} \hbar_F \| \sjump{\partial_ju_{\mesh}}\|_F^2.
\end{align*}
Invoking the triangle inequality gives
\begin{align*}
\| \sjump{\partial_ju_{\mesh}}\|_F &\le \| (I - \Pi^k_F) \sjump{\pd{j} u_{\mesh}}\|_F
+\|\Pi^k_F \sjump{\pd{j} u_{\mesh}}\|_F \\
&\lesssim \hbar_F \| \sjump{\partial_t\partial_j u_{\mesh}} \|_F + \|\Pi^k_F \sjump{\partial_n u_{\mesh}}\|_F + \|\Pi^k_F \sjump{\partial_t u_{\mesh}}\|_F \\
&\lesssim \hbar_F \big\{ \|\sjump{\partial_{tt} u_{\mesh}}\|_{F} + \|\sjump{\partial_{nt} u_{\mesh}}\|_{F} \big\}\\
&\quad + \|\Pi^k_F \sjump{\partial_n u_{\mesh}}\|_F + (k+2)\hbar_F^{-1} \|\sjump{u_{\mesh}}\|_F,
\end{align*}
where the bounds on the second line result from the $hp$-approximation properties of $\Pi^k_F$ on $F$ (for the first term) and the decomposition of any directional derivative into normal and tangential derivatives (for the second term), and the bounds on the third line result from the same decomposition (for the first term) and the $L^2$-stability of $\Pi^k_F$ and
the inverse inequality \eqref{eq:discrete inverse} on $F$ (for the third term).
This implies that
\begin{align*}
\sum\limits_{F \in \Fa} \sumj \hbar_F \| \n_F \times \sjump{\pd{j} u_{\mesh} &\nabla \psi_{\ba}} \|^2_F \lesssim {} \sum\limits_{F \in \Fa} \Big\{ \hbar_F \big\{\|\sjump{\partial_{tt} u_{\mesh}}\|_{F}^2 + \|\sjump{\partial_{nt} u_{\mesh}}\|_{F}^2\big\} \\
& + (k+2) \hbar_F^{-1} \|\Pi^k_F \sjump{\partial_n u_{\mesh}}\|_F^2
+(k+2)  \hbar_F^{-3} \|\sjump{u_{\mesh}}\|_F^2 \Big\}.
\end{align*}
Putting the above bounds together, we infer that
\begin{multline}\label{eq: IPDG T1}
|T_1| \lesssim \bigg\{ \sum\limits_{F \in \Fa} \Big\{ \hbar_F \big\{\|\sjump{\partial_{tt} u_{\mesh}}\|_{F}^2 + \|\sjump{\partial_{nt} u_{\mesh}}\|_{F}^2\big\} \\
+ (k+2)\hbar_F^{-1} \|\Pi^k_F \sjump{\partial_n u_{\mesh}}\|_F^2
+ (k+2)\hbar_F^{-3} \|\sjump{u_{\mesh}}\|_F^2 \Big\}
\bigg\}^{\frac12} \|\hes_{\mesh}\delta_{\ba}\|_{\oma}.
\end{multline}
Adding and subtracting $\gamma_F$ and $u_F$ to the third and fourth terms on the right-hand side, invoking the triangle inequality, recalling the definition~\eqref{def:stab} of the stabilization bilinear form, we obtain
\begin{equation} \label{T1:BOUND}
|T_1| \lesssim \bigg\{ \sum\limits_{F \in \Fa} \Big\{ \hbar_F \big\{\|\sjump{\partial_{tt} u_{\mesh}}\|_{F}^2 + \|\sjump{\partial_{nt} u_{\mesh}}\|_{F}^2\big\}
+ \sum_{T \in \T_{\ba}} S_{{\dK}}(\hat{u}_T, \hat{u}_T)
\bigg\}^{\frac12} \|\hes_{\mesh}\delta_{\ba}\|_{\oma}.
\end{equation}

(3) Bound on $T_2$. Since $\mKM(\ut{\b{\psi}}) \in \ut{\b{H}}^1(\oma)$, integrating by parts the gradient operator gives
\begin{align}\label{eq: IPDG T2}
T_2 &= \big(\hes u^{\ba}_c,\, \curlrw \mKM(\b{\ut{\psi}})\big)_{\oma} - \big(\hes_{\mesh} (\psi_{\ba} u_{\mesh}),\, \curlrw \mKM(\b{\ut{\psi}})\big)_{\oma} \nonumber \\
&= -\sum_{F \in \Fa} \big( \sjump{\nabla_{\mesh} (\psi_{\ba} u_{\mesh})}, \curlrw{\mKM(\b{\ut{\psi}})}\b{n}_F\big)_F,
\end{align}
recalling that $\psi_{\ba}$ vanishes on all faces $F\in\bFa$ and observing that $\curlrw{\mKM(\b{\ut{\psi}})}\b{n}_F$ is single-valued on those faces.
Next, using the product rule $\nabla_{\mesh}(\psi_{\ba} u_{\mesh})= u_{\mesh} \nabla \psi_{\ba} +\psi_{\ba}\nabla_{\mesh} u_{\mesh}$, we obtain
\begin{align*}
-T_2  ={}& \sum_{F \in \Fa} \Big\{  \big(\nabla \psi_{\ba} \sjump{u_{\mesh}},\curlrw{\mKM(\b{\ut{\psi}})}\b{n}_F\big)_F\\
& +\big( (I - \Pi^k_F) \sjump{\nabla_{\mesh} u_{\mesh}}, \psi_{\ba} \curlrw {\mKM(\b{\ut{\psi}})}\b{n}_F \big)_F \\
& +\big(\Pi^k_F\sjump{\nabla_{\mesh}u_{\mesh}},\psi_{\ba} \curlrw{\mKM(\b{\ut{\psi}})}\b{n}_F\big)_F \Big\},
\end{align*}
and we bound the three terms on the right-hand side, say $T_{21}$, $T_{22}$, and $T_{23}$.
Since $\|\nabla \psi_{\ba}\|_{\bL^{\infty}(F)} \lesssim  h_F^{-1}$, invoking the Cauchy--Schwarz inequality and recalling the definition~\eqref{def:stab} of the stabilization bilinear form gives
\begin{align*}
|T_{21}|& \lesssim
\sum_{F \in \Fa} h^{-1}_F\|\sjump{u_{\mesh}}\|_F\|\curlrw{\mKM(\b{\ut{\psi}})}\b{n}_F\|_F \\
&\lesssim \bigg\{\sum_{F \in \cF_a} \hbar_F^{-3}\frac{(k+2)^2}{h^3_F}\|\sjump{u_{\mesh}}\|^2_F\bigg\}^{\frac{1}{2}}\bigg\{\sum_{F \in \cF_a}\frac{h_F}{(k+2)^2}\|\curlrw{\mKM(\b{\ut{\psi}})}\b{n}_F\|^2_F\bigg\}^{\frac{1}{2}}  \\
&  \lesssim \bigg\{ \sum_{T \in \T_{\ba}}S_{{\dK}}(\hat{u}_T, \hat{u}_T)\bigg\}^{\frac{1}{2}}
\bigg\{\sum_{F \in \cF_a}\frac{h_F}{(k+2)^2}\|\curlrw{\mKM(\b{\ut{\psi}})}\b{n}_F\|^2_F\bigg\}^{\frac{1}{2}}.
\end{align*}
Using the trace inverse estimate \eqref{discrete} on $T\in\T_{\ba}$ so that $F\in\FK$, the stability of the interpolation operator $\mKM$ (see~\eqref{eq: Modified KM approximation}), and the stability of the local Helmholtz decomposition (see~\eqref{bound:Helmholtz}), we obtain
\begin{align}
\frac{h_F}{(k+2)^2}\|\curlrw{ \mKM(\b{\ut{\psi}})}\b{n}_F\|_F^2 &\lesssim \|
\curlrw{ \mKM(\b{\ut{\psi}})}\b{n}_F \|_{T} \nonumber \\
& \lesssim |\mKM(\b{\ut{\psi}})|_{\ut{\b{H}}^1(T)}
\lesssim |\ut{\b{\psi}}|_{\ut{\b{H}}^1(\omega_a)}
\lesssim \|\hes_{\mesh} \delta_{\ba}\|_{\oma}. \label{eq:bnd_trace_curlrw}
\end{align}
Combining the above two bounds yields
$$
|T_{21}| \lesssim \bigg\{ \sum_{T \in \T_{\ba}}S_{{\dK}}(\hat{u}_T, \hat{u}_T)\bigg\}^{\frac{1}{2}} \|\hes_{\mesh} \delta_{\ba}\|_{\oma}.
$$
To bound $T_{22}$, we observe that
\begin{align*}
T_{22} &=- \sum_{F \in \mathcal{F}_a}  \big((I - \Pi^k_F) \sjump{\nabla_{\mesh} u_{\mesh}}, (I - \Pi^k_{\tilde{T}})(\psi_{\ba}\curlrw{\mKM(\b{\ut{\psi}})}\b{n}_F)\big)_{F} \\
&\leq \sum_{F \in \mathcal{F}_a} \|(I - \Pi^k_F) \sjump{\nabla_{\mesh} u_{\mesh}} \|_{F}
\|(I - \Pi^k_{\tilde{T}}) (\psi_{\ba} \curlrw{\mKM(\b{\ut{\psi}})\b{n}_F}\|_{F},
\end{align*}
where $\tilde{T} \in \tilde{\mathcal{T}}_h$ denotes a simplex in the Alfeld split of which $F$ is a face. This simplex $\tilde{T}$ is a subset of a simplex $T\in\T_{\b{a}}$. Observing that $(\psi_{\ba}\curlrw {\mKM(\b{\ut{\psi}})}\b{n}_F)_{i} \in \mathbb{P}^{k+5}(\tilde{T})$ for all $i\in\{1{:}d\}$ and invoking the  discrete trace inequality~\eqref{eq: discrete trace sharp_estimate} with $p=k+5$ and $n=k$ (this is the crucial point where we gain a factor $(k+2)$ in the bound since $p-n=5$),  we infer that
\begin{align*}
\|(I - \Pi^k_{\tilde{T}}) (\psi_{\ba}\curlrw {\mKM(\b{\ut{\psi}})} \b{n}_F)\|_F
& \lesssim  \hbar_F^{-\frac12} \|(I - \Pi^k_{\tilde{T}}) (\psi_{\ba}\curlrw{\mKM(\b{\ut{\psi}})} \b{n}_F)\|_{\tilde{T}}\\
&\lesssim \hbar_F^{-\frac12} \|\curlrw {\mKM(\b{\ut{\psi}})}\|_{\tilde{T}}\\
& \lesssim \hbar_F^{-\frac12} \|\curlrw {\mKM(\b{\ut{\psi}})}\|_{T},
\end{align*}
where we also used that $k+5\lesssim k+2$. Moreover,
invoking the $hp$-approximation properties of $\Pi^k_F$ on $F$ gives
$$
\|(I - \Pi^k_F) \sjump{\nabla_{\mesh} u_{\mesh}} \|_F \lesssim
\hbar_F \big\{ \|\sjump{\partial_{tt} u_{\mesh}}\|_F+\|\sjump{\partial_{nt} u_{\mesh}}\|_F\big\}.
$$
Hence, putting the above two bounds together leads to
$$
|T_{22}| \lesssim \bigg\{ \sum_{F \in \mathcal{F}_a} \hbar_F \big\{ \|\sjump{\partial_{tt} u_{\mesh}}\|_F^2+\|\sjump{\partial_{nt} u_{\mesh}}\|_F^2\big\} \bigg\}^{\frac12} \|\hes_{\mesh} \delta_{\ba}\|_{\oma},
$$
where we invoked again~\eqref{eq:bnd_trace_curlrw}.
Finally, to bound $T_{23}$, we obtain using similar arguments that
\begin{align*}
|T_{23}| \lesssim {}& \sum_{F \in \Fa} \big\{ \|\Pi^k_F \sjump{\partial_n u_{\mesh} }\|_F
+ \|\Pi^k_F \sjump{\partial_t u_{\mesh} }\|_F\big\} \|\curlrw {\mKM(\b{\ut{\psi}})}\|_F.
\end{align*}
Reasoning as above and invoking again the bound~\eqref{eq:bnd_trace_curlrw} leads to
$$
|T_{23}| \lesssim \bigg\{ \sum_{T \in \T_{\ba}}S_{{\dK}}(\hat{u}_T, \hat{u}_T)\bigg\}^{\frac{1}{2}} \|\hes_{\mesh} \delta_{\ba}\|_{\oma}.
$$
Collecting the bounds on $T_{21}$, $T_{22}$, and $T_{23}$ gives
$$
|T_2| \lesssim \bigg\{ \sum_{T \in \T_{\ba}}S_{{\dK}}(\hat{u}_T, \hat{u}_T)
+ \sum_{F \in \mathcal{F}_a}  \hbar_F \big\{ \|\sjump{\partial_{tt} u_{\mesh}}\|_F^{2}+\|\sjump{\partial_{nt} u_{\mesh}}\|_F^{2}\big\} \bigg\}^{\frac{1}{2}} \|\hes_{\mesh} \delta_{\ba}\|_{\oma}.
$$
Finally, combining this bound on $T_2$ with the bound~\eqref{T1:BOUND} on $T_1$ completes the proof.

\begin{remark}[Application to IPDG]
To make the perspective of applying the above $H^2$-reconstruction
to IPDG methods, let us briefly discuss the minor modifications to the above proof. Since the IPDG method does not involve face unknowns, the term $T_1$ is bounded using \eqref{eq: IPDG T1}. Likewise, instead of splitting the term $T_2$ as in \eqref{eq: IPDG T2}, a discrete trace inequality can be applied directly. Consequently, for all $u_{\mesh}\in \mathbb{P}^{k+2}(\mesh)$, we obtain
\begin{align*}
\|\hes_{\mesh} (u_c - u_{\mesh})\|^2_{\Omega} &\lesssim \sum_{T \in \mesh} \big\{ \eta_{T, {\rm tan}}^2 + (k+2)\hbar_T^{-1}
 \|\sjump{\partial_{n} u_{\mesh}}\|_{\dK}^2
+
(k+2)^{3} \hbar_T^{-3}\|\sjump{u_{\mesh}}\|_{\dK}^2  \big\},
\end{align*}
and observe that the two rightmost terms correspond to the expected stabilization employed in the IPDG method.
\end{remark}

\bibliographystyle{siam}
\bibliography{bibliography.bib}
\end{document}